\pgfplotsset{compat=1.15}
\definecolor{ffqqqq}{rgb}{1.,0.,0.}
\definecolor{uuuuuu}{rgb}{0.26666666666666666,0.26666666666666666,0.26666666666666666}
\newcommand{\R}{\mathbb{R}}
\newcommand{\N}{\mathbb{N}}
\newcommand{\Ha}{\mathcal{H}}
\newcommand{\W}{\mathcal{W}}
\newcommand{\Leb}{\mathcal{L}}
\newcommand{\eps}{\varepsilon}
\newcommand{\loc}{\text{loc}}
\newcommand{\phii}{\varphi}
\newcommand{\bmat}{\begin{bmatrix}}
\newcommand{\emat}{\end{bmatrix}}
\newcommand{\wtil}{\widetilde}
\newcommand{\st}{\text{ s.t. }}
\newcommand{\oXeps}{\overline X_\eps}
\providecommand*{\vint}[1]{\mathchoice
          {\mathop{\vrule width 5pt height 3 pt depth -2.5pt
                  \kern -9pt \kern 1pt\intop}\nolimits_{\kern -5pt{#1}}}
          {\mathop{\vrule width 5pt height 3 pt depth -2.6pt
                  \kern -6pt \intop}\nolimits_{\kern -3pt{#1}}}
          {\mathop{\vrule width 5pt height 3 pt depth -2.6pt
                  \kern -6pt \intop}\nolimits_{\kern -3pt{#1}}}
          {\mathop{\vrule width 5pt height 3 pt depth -2.6pt
                  \kern -6pt \intop}\nolimits_{\kern -3pt{#1}}}}
\DeclareMathOperator{\Lip}{Lip}
\DeclareMathOperator{\dist}{dist}
\DeclareMathOperator{\codim}{codim}
\DeclareMathOperator{\Mod}{Mod}
\DeclareMathOperator{\diam}{diam}
\DeclareMathOperator{\rad}{rad}
\DeclareMathOperator{\Euc}{Euc}
\DeclareMathOperator{\vcap}{cap}
\numberwithin{equation}{section}
\theoremstyle{plain}
\newtheorem{thm}[equation]{Theorem}
\newtheorem{prop}[equation]{Proposition}
\newtheorem{lem}[equation]{Lemma}
\theoremstyle{definition}
\newtheorem{defn}[equation]{Definition}
\newtheorem{remark}[equation]{Remark}
\newtheorem{example}[equation]{Example}
\def\blfootnote{\xdef\@thefnmark{}\@footnotetext}
\begin{document}

\title[Self-improvement of  fractional Hardy inequalities]{Self-improvement of fractional Hardy inequalities in metric measure spaces via hyperbolic fillings}


\author{Sylvester Eriksson-Bique}
\address{Department of Mathematics and Statistics,
P.O. Box 35,
FI-40014 University of Jyväskylä}
\email{sylvester.d.eriksson-bique@jyu.fi}
\author{Josh Kline}
\address{Department of Mathematical Sciences, P.O. Box 210025, University of Cincinnati,
Cincinnati, OH 45221–0025, U.S.A.}
\email{klinejp@ucmail.uc.edu}

\thanks{The first author was supported by Research Council of Finland grants 354241 and 356861. The second author was supported by the NSF Grant DMS-\#2054960, as well as the University
of Cincinnati’s University Research Center summer grant and the Taft Research Center Dissertation
Fellowship Award. The research was conducted during the visit of the second author to University of Jyv\"askyl\"a which was partially funded by the Eemil Aaltonen Foundation through the research group ``Quasiworld network.'' We thank Antti V\"ah\"akangas for suggesting the problem and for many discussions during the early stages of the project.  We would also like to thank Lizaveta Ihnatsyeva for her helpful discussions and insight.}

\subjclass[2020]{35R11
 (26D10; 28A75; 30L15; 31C15; 31E05; 35A23; 46E35)}
\keywords{Fractional Hardy inequality, self-improvement, metric measure space, hyperbolic filling, traces and extensions, Hardy inequality, weighted inequality, Whitney cubes, Besov spaces, Sobolev spaces.}

\date{\today}

\maketitle

\begin{abstract}
In this paper, we prove a self-improvement result for $(\theta,p)$-fractional Hardy inequalities, in both the exponent $1<p<\infty$ and the regularity parameter $0<\theta<1$, for bounded domains in doubling metric measure spaces. The key conceptual tool is a Caffarelli-Silvestre-type argument, which relates fractional Sobolev spaces on $Z$ to Newton-Sobolev spaces in the hyperbolic filling $\oXeps$ of $Z$ via trace results. Using this insight, it is shown that a fractional Hardy inequality in an open subset of $Z$ is equivalent to a classical Hardy inequality in the filling $\oXeps$. The main result is then obtained by applying a new weighted self-improvement result for $p$-Hardy inequalities. The exponent $p$ can be self-improved by a classical Koskela-Zhong argument, but a new theory of regularizable weights is developed to obtain the self-improvement in the regularity parameter $\theta$. This generalizes a result of Lehrb\"ack and Koskela on self-improvement of $d_\Omega^\beta$-weighted $p$-Hardy inequalities by allowing a much broader class of weights. Using the equivalence of fractional Hardy inequalities with Hardy inequalities in the fillings, we also give new examples of domains satisfying fractional Hardy inequalities.
\end{abstract}
\section{Introduction}

\noindent The main theorem of this paper is a self-improvement result for fractional Hardy inequalities. In the process, we develop a Caffarelli-Silvestre-type \cite{CaSi} approach which relates fractional Hardy inequalities to classical Hardy inequalities via trace results. For classical Hardy inequalities, we also give a new weighted self-improvement result. All results are stated for general metric measure spaces, but apply more classically in sub-Riemannian spaces, manifolds and, Euclidean spaces. Even in these classical settings, the results are new.

\subsection{Fractional Hardy inequalities}

 We say that a $p$-Hardy inequality holds for a non-empty open subset $\Omega$ in a metric measure space $(X,d,\mu)$ if for some $C_p\in (0,\infty)$ we have 
\begin{equation}\label{eq:phardy}
\int_\Omega \frac{|u(x)|^p}{d_\Omega(x)^p} d\mu(x) \leq C_p \int_\Omega g_u^p d\mu,
\end{equation}
for all Lipschitz functions $u\in \Lip_c(\Omega)$ that are compactly supported in $\Omega$, where $g_u$ is its minimal $p$-weak upper gradient of $u$, see \cite{HKST, H} for details. Here, $d_\Omega(x)=d(x,X\setminus \Omega)$.  In Euclidean spaces, with $(X,d,\mu)=(\R^n,|\cdot|,\Leb^n)$, the $p$-Hardy inequality can be equivalently stated for all continuously differentiable functions $u\in C^1_0(\Omega)$ with compact support in $\Omega$ and  with $|\nabla u|$ replacing $g_u$. For more detailed definitions, see Section \ref{sec:preliminaries}. Both the Euclidean inequality, and the general metric measure variant, have been extensively studied, see \cite{Hardy,KZ, L, L2, Brezis,C, HS, Kline, Wannebo} and references therein. Good surveys on the topic are \cite{KufnerPersson, OK, Davies}. 

It is natural to also consider the fractional version of the previous inequality, where the Dirichlet-type energy on the right is replaced by a fractional energy. The $(\theta,p)$-fractional Hardy inequality is:
\begin{equation}\label{eq:frachardy}
\int_\Omega \frac{|u(x)|^p}{d_\Omega(x)^{\theta p}} d\mu(x) \leq C_{\theta,p} \int_X\int_X \frac{|u(x)-u(y)|^p}{d(x,y)^{\theta p} \mu(B(x,d(x,y)))} d\mu(y) d\mu(x).
\end{equation}
The energy on the right is one of the definitions of a fractional Sobolev energy used in the definition of Besov spaces, see e.g. \cite{Adams,CaSi, GKS,BBS}.
The fractional Hardy inequality has previously been studied in e.g. \cite{dyda,KufnerPersson,LMV,IMV,DydaLV}. 

Hardy inequalities enjoy self-improvement phenomena: if a given open set $\Omega$ satisfies a $p$-Hardy inequality, then it also satisfies a $q$-Hardy inequality with $q\in (p-\epsilon,p+\epsilon)$ for some $\epsilon>0$.  This was first shown by Koskela and Zhong \cite{KZ}, and was further studied and reproved in \cite{EB1, EB2, L, Korte}. Such self-improvement results were known earlier for Muckenhoupt weights \cite{Stein} and for Poincar\'e inequalities \cite{KeZ}. There are also weighted self-improvement results, see \cite{L,KufnerPersson}. Such improvement results are important in applications to partial differential equations and regularity of quasiconformal maps, see \cite{KZ} for further discussion and references.

Our main theorem gives a self-improvement result for fractional Hardy inequalities in doubling metric measure spaces. 

\begin{thm}\label{thm:FracHardyImprovement}
    Let $(Z,d,\nu)$ be a complete, doubling metric measure space.  Let $E\subset Z$ be a closed set such that $Z\setminus E$ is bounded and satisfies a $(\theta_0,p_0)$-Hardy inequality, with constant $C_{\theta_0,p_0}$, for some $0<\theta_0<1$ and $1<p_0<\infty$.  Then there exists $\eps_0>0$, depending only on $\theta_0$, $p_0$, $C_{\theta_0,p_0}$, and $C_\nu$, such that $Z\setminus E$ satisfies a $(\theta,p)$-Hardy inequality for all $\theta\in(\theta_0-\eps_0,\theta_0+\eps_0)$ and $p\in (p_0-\eps_0,p_0+\eps_0)$.    
\end{thm}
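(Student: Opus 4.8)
The plan is to reduce the fractional Hardy inequality on $Z\setminus E$ to a classical, weighted $p$-Hardy inequality in the hyperbolic filling $\oXeps$ of $Z$, improve the latter by the two self-improvement mechanisms advertised in the introduction (the Koskela–Zhong argument for the exponent, and the regularizable-weight theory for the regularity parameter), and then transfer back. Concretely, I would proceed as follows.

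First, I would invoke the Caffarelli–Silvestre-type correspondence developed earlier in the paper: the hyperbolic filling $\oXeps$ of $(Z,d,\nu)$ comes equipped with a natural measure for which there is a bounded trace operator $\oXeps\to Z$ and a bounded extension operator $Z\to\oXeps$ identifying the Besov space $B^\theta_{p,p}(Z)$ (whose seminorm is exactly the right-hand side of \eqref{eq:frachardy}) with a weighted Newton–Sobolev space $N^{1,p}(\oXeps,w_\theta)$, where the weight $w_\theta$ encodes the parameter $\theta$ through the vertical scale. Under this identification, a $(\theta,p)$-fractional Hardy inequality on $\Omega=Z\setminus E$ in \eqref{eq:frachardy} becomes equivalent to a weighted $p$-Hardy inequality in the open "tube" $\widehat\Omega\subset\oXeps$ sitting above $\Omega$, with the distance-to-boundary weight on the base replaced by the intrinsic distance to the boundary of $\widehat\Omega$ and with the additional vertical weight $w_\theta$. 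This is the step where all the geometry of hyperbolic fillings is used, so I would lean on the trace/extension theorems stated earlier rather than reprove them.

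Second, with the fractional inequality recast as a weighted classical Hardy inequality in $\oXeps$, I would apply the new weighted self-improvement result for $p$-Hardy inequalities. The exponent improvement $p_0\rightsquigarrow p\in(p_0-\eps_0,p_0+\eps_0)$ follows from a Koskela–Zhong-type argument adapted to the weighted setting: one shows the weighted Hardy inequality implies a pointwise/maximal-function estimate with an $\eps$ of room, then runs the standard boot-strap. The harder half is the improvement in $\theta$. Here I would use the theory of regularizable weights: the family $\{w_\theta\}$ must be shown to be regularizable, meaning small perturbations $w_{\theta\pm\delta}$ remain comparable to admissible weights for which the weighted Hardy inequality is stable. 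The Lehrbäck–Koskela result handles only the power weights $d_\Omega^\beta$; the point is that $w_\theta$ on the tube $\widehat\Omega$ is not literally such a power weight but lies in the broader regularizable class, so the generalized self-improvement theorem applies and yields an $\eps_0>0$, quantitative in $\theta_0,p_0,C_{\theta_0,p_0},C_\nu$, for which the weighted $p$-Hardy inequality holds for all nearby $(\theta,p)$.

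Finally, I would run the Caffarelli–Silvestre correspondence in reverse for each nearby pair $(\theta,p)$: the improved weighted $p$-Hardy inequality in $\oXeps$ with weight $w_\theta$ transfers, via the trace and extension bounds, back to the $(\theta,p)$-fractional Hardy inequality \eqref{eq:frachardy} on $Z\setminus E$, with a constant $C_{\theta,p}$ controlled by the data. The main obstacle I anticipate is precisely the $\theta$-direction: verifying that the vertical weights $w_\theta$ arising from the filling fall within the regularizable class and that the self-improvement constant degrades controllably as $\theta\to 0$ or $\theta\to 1$ (where the trace theory itself breaks down), which is why the theorem only claims a two-sided neighbourhood of $\theta_0$ strictly inside $(0,1)$. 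A secondary technical point is bookkeeping the dependence of $\eps_0$ only on the stated quantities and not on $E$ itself, which should follow because every estimate above is phrased in terms of the doubling constant $C_\nu$ and the Hardy constant, uniformly in the geometry of $E$.
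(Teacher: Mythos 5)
Your overall strategy is the one the paper follows: transfer the fractional inequality to a $p$-Hardy inequality in the hyperbolic filling via the equivalence of Theorem~\ref{thm:HypFill Hardy}, improve the exponent by Koskela--Zhong with the measure $\mu_{\beta_0}$ held fixed, improve the regularity parameter by perturbing $\beta$ --- i.e.\ multiplying $\mu_{\beta_0}$ by $d_\eps(\cdot,Z)^\sigma$, which is $\delta$-regularizable because $Z$ is porous in $\oXeps$ (Lemma~\ref{lem:Z porous in X}, Proposition~\ref{prop:DistanceToPorousSets}) --- and transfer back, combining the two one-parameter improvements by continuity in $p$. So the conceptual skeleton is correct.

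There is, however, a genuine gap at the very first step: Theorem~\ref{thm:HypFill Hardy} and the trace/extension machinery of \cite{BBS} are only available for a \emph{compact} space $Z$ with $\diam(Z)<1$, whereas the theorem assumes only that $Z$ is complete and doubling and that $Z\setminus E$ is bounded, so $Z$ (and $E$) may be unbounded. Your proposal applies the filling to $Z$ directly and never addresses this. The paper spends Section~\ref{sec:Self-Improvement} (Propositions~\ref{prop:exhaustion} and~\ref{prop:localization}) on exactly this reduction: one builds a compact set $Z_R$ with $B(z_0,R)\subset Z_R\subset B(z_0,2R)$ whose restricted measure is still doubling (a corkscrew argument in the spirit of Rajala \cite{R}), and then shows the $(\theta,p)$-Hardy inequality localizes to $Z_R$. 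The localization is not automatic precisely because the Besov energy is nonlocal: restricting the double integral from $Z\times Z$ to $Z_R\times Z_R$ only \emph{decreases} the right-hand side, so one must bound the tail term over $B(z_0,R)^c$ by $C_1R^{-\theta p}\int_{Z\setminus E}|u|^p\,d\nu$ and absorb it into the left-hand side by choosing $R$ large; without this (or a substitute) your argument proves the theorem only for compact $Z$ of small diameter. Two smaller imprecisions: the filling-side inequality is the $p$-Hardy inequality for the open set $\oXeps\setminus E$, with distance taken to $E$, not for a ``tube above $\Omega$''; and no weighted adaptation of Koskela--Zhong is needed, since $\mu_{\beta_0}$ is itself doubling and supports a $(1,1)$-Poincar\'e inequality on $\oXeps$, so \cite{KZ} is applied as a black box --- the weight (a power of the distance to the porous set $Z$, which is genuinely outside the $d_\Omega^\beta$ class of \cite{L,L2}) enters only in the $\theta$-direction, exactly as you anticipated.
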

\noindent Here, $C_\nu$ is the doubling constant of $\nu$, see Section~\ref{sec:preliminaries}.

Self-improvement results of the above form were recently obtained in \cite{IMV} for a \emph{pointwise} version of $(\theta,p)$-Hardy inequalities under the additional assumptions that $Z$ is geodesic and reverse doubling, that $E\subset Z$ satisfies a fractional $(\theta,p)$-capacity density condition, see Definition~\ref{def:FracCapDensity}, and under some restrictions on the parameters $\theta$ and $p$.  In fact, the results there were obtained for pointwise $(\theta,p,q)$-Hardy inequalities, where the right hand side of \eqref{eq:frachardy} is replaced with a $F^\theta_{p,q}$-Triebel-Lizorkin energy.  In their argument, the authors show that the fractional capacity density condition is equivalent to a pointwise fractional Hardy inequality. In conjunction with results from \cite{LMV}, they then obtain a deep self-improvement result for the fractional capacity density condition. The pointwise fractional Hardy inequality implies the (integral) fractional Hardy inequality under some restrictions of the parameters, and thus \cite{LMV} implies Theorem \ref{thm:FracHardyImprovement} under the additional assumption of a pointwise Hardy inequality. While our results are limited to the case that $p=q$, our different approach allows us to remove the pointwise fractional Hardy inequality assumption, and so our results apply to domains such as the Euclidean punctured unit ball, see Examples~\ref{ex:punctured ball} and \ref{ex:Zero Capacity}, for which this condition fails.  Moreover, as we utilize the trace and extension relationship between Sobolev and Besov functions via the hyperbolic filling, described below, we do not need to assume that $Z$ is geodesic or reverse doubling, and so we are able to obtain our results for a full range of parameters $0<\theta<1$ and $1<p<\infty$.  For more on pointwise Hardy inequalities and capacity density conditions, see for example \cite{KM,Korte,EB1,EB2,Haj,LMV}.  See also the classical work of Lewis on the self-improvement of the $p$-capacity density condition \cite{Lewis}.  We now describe the crucial tools that lead to a proof of Theorem~\ref{thm:FracHardyImprovement}.

\subsection{Equivalence with $p$-Hardy inequalities in the hyperbolic filling}

As mentioned above, there are a number of known results on the self-improvement for the $p$-Hardy inequality, \cite{KZ,L2,L}. Motivated by this, one might seek to prove self-improvement for the fractional Hardy inequality by either mimicking the proofs of these results, or by reducing the fractional Hardy inequality results to those for Hardy inequalities. Here, we take the latter approach, which is motivated by the Caffarelli-Silvestre theory \cite{CaSi} (see also \cite{Baudoin, EBGKSS} for more general versions) and the trace theory of Sobolev spaces \cite{BBS} using hyperbolic fillings. 

The technique of hyperbolic fillings was introduced in \cite{BK,BP, BSaks} and has origins in a construction from Gromov \cite[P.99 (b)]{Gromov}. Since this technique has mainly been developed for compact spaces, we will now focus on the case where $Z$ is additionally assumed to be compact. This assumption is removed later through a localization argument. In this approach, given $(Z,d)$ one constructs another metric space, a uniformized hyperbolic filling $(\oXeps,d_\eps)$, whose boundary is bi-Lipschitz equivalent with $(Z,d)$. In \cite{BBS}, the authors considered $(Z,d,\nu)$ to be a compact, doubling metric measure space, and equipped the uniformized hyperbolic filling of $Z$ with a family of measures $\mu_\beta$ induced from $\nu$.  The main result in \cite{BBS} shows that Besov spaces on $Z$ - functions for which the right hand side of \eqref{eq:frachardy} is finite - arise as traces of Newton-Sobolev functions, in the sense of \cite{shanmun,cheeger,HKST}, on $\oXeps$. The Dirichlet energy of Newton-Sobolev functions is the right hand side of the $p$-Hardy inequality \eqref{eq:phardy}. Thus, the trace theory forms a bridge between the Dirichlet energies in \eqref{eq:phardy} with the fractional energies in \eqref{eq:frachardy}. Utilizing this, we prove that the fractional $(\theta,p)$-Hardy inequality in $(Z,d,\nu)$ is equivalent to a $p$-Hardy inequality in $(\oXeps,d_\eps,\mu_\beta)$.  While this is simple to state, the proof involves several nontrivial estimates.

\begin{thm}\label{thm:HypFill Hardy}
    Let $(Z,d,\nu)$ be a compact, doubling metric measure space, with $\diam(Z)<1$.  Let $E\subset Z$ be a closed set, and let $1<p<\infty$, $0<\theta<1$, and $\beta>0$ be such that $\beta/\eps=p(1-\theta)$.  Then, $Z\setminus E$ satisfies a $(\theta,p)$-Hardy inequality if and only if $\oXeps\setminus E$ satisfies a $p$-Hardy inequality with respect to $\mu_\beta$.
\end{thm}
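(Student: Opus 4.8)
The plan is to pass between the two inequalities using the trace and extension operators from \cite{BBS}, which identify the Besov space $B^\theta_{p,p}(Z)$ with the trace of the Newton–Sobolev space $N^{1,p}(\oXeps, \mu_\beta)$ precisely when $\beta/\eps = p(1-\theta)$. The key point is that these operators respect the supports of functions relative to the closed set $E$: a function on $Z$ vanishing on $E$ (or near $E$) should extend to a function on $\oXeps$ vanishing on (a neighborhood of) the boundary copy of $E$, and conversely the trace of a function supported away from $E$ in $\oXeps$ is supported away from $E$ in $Z$. So the first step is to set up carefully the class of admissible test functions on each side — for the fractional Hardy inequality, Lipschitz functions on $Z$ supported away from $E$ (or, after a density argument, functions in the Besov space with appropriate vanishing); for the $p$-Hardy inequality in $\oXeps$, functions in $\Lip_c(\oXeps \setminus E)$ — and to check that trace and extension map one class into the other, up to uniformly controlled constants.

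Second, I would compare the two energies term by term. For the $p$-Hardy side, the left-hand side is $\int_{\oXeps \setminus E} |u|^p / d_{\oXeps\setminus E}^p \, d\mu_\beta$, and the right-hand side is $\int_{\oXeps} g_u^p \, d\mu_\beta$. For the fractional side, the right-hand side is the Besov energy on $Z$ and the left-hand side is $\int_{Z\setminus E} |u|^p / d_{Z\setminus E}^{\theta p}\, d\nu$. The equivalence of the Besov energy on $Z$ with the Dirichlet energy of a (quasi)minimal Newtonian extension is exactly the content of the trace theorem of \cite{BBS}, so the main work is matching the \emph{weighted} $L^p$ terms: I expect to show that
\[
\int_{Z\setminus E} \frac{|u(x)|^p}{d_{Z\setminus E}(x)^{\theta p}}\, d\nu(x) \approx \int_{\oXeps\setminus E} \frac{|\Ext u|^p}{d_{\oXeps\setminus E}^p}\, d\mu_\beta,
\]
and the reverse comparison for traces, with constants depending only on the structural data. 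This uses the geometry of the uniformized filling: the distance $d_\eps(v, \cdot)$ to the boundary behaves like $e^{-\eps \, |v|}$ where $|v|$ is the generation (height) of a vertex $v$, the measure $\mu_\beta$ weights a ball around a generation-$n$ vertex over $\bar{B}$ by roughly $e^{-\beta n}\nu(\bar B)$, and vertical distances in $\oXeps$ near a boundary point $x$ are comparable to $d_{Z\setminus E}(x)$ when $x$ is far from $E$. Summing the contributions of the vertices lying "above" a fixed Whitney-type region of $Z\setminus E$, the geometric series in the generation parameter converges exactly because $\beta/\eps = p(1-\theta) > 0$, and produces the weight $d_{Z\setminus E}^{-\theta p}$ against $d\nu$; one must also handle vertices sitting above points near $E$, where the relevant distance in $\oXeps$ is the horizontal distance to $E$ rather than the vertical one, but for test functions supported away from $E$ this region contributes nothing.

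The main obstacle I anticipate is the bookkeeping in this summation: one direction needs a \emph{lower} bound on the $\oXeps$-side Hardy term by the $Z$-side term (to deduce the fractional Hardy inequality from the classical one), which requires that the extension operator not lose mass — concretely, that $\Ext u$ is comparable to $u$ on the "boundary layer" of $\oXeps$ and decays in a controlled way into the filling, so that the vertical integral is bounded below and not just above. The extension in \cite{BBS} is built by a discrete convolution / partition-of-unity averaging of boundary values over balls of the appropriate scale, so near a point $x$ with $d_{Z\setminus E}(x)$ large the averages stay close to $u(x)$ for a definite number of generations, giving the lower bound; making this quantitative, uniformly in the parameters, and reconciling it with the distance function $d_{\oXeps\setminus E}$ (which sees both the vertical height and the horizontal position relative to $E$) is the delicate part. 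Once these comparisons are in place, the equivalence of the two Hardy inequalities — with constants depending only on $p$, $\theta$, $\eps$, $\beta$, and the doubling constant — follows by chaining: a test function for one inequality is pushed to a test function for the other via trace or extension, both energies are replaced by their comparable counterparts, and the Hardy constant on one side yields a Hardy constant on the other.
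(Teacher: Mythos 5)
Your overall strategy (trace/extension from \cite{BBS}, the interlinking $\beta/\eps=p(1-\theta)$, Whitney-type decompositions and geometric series in the generation parameter) is the same as the paper's, but the step you make the centerpiece — the two-sided comparison of the weighted $L^p$ terms alone, i.e.
\[
\int_{Z\setminus E}\frac{|u|^p}{d_{Z\setminus E}^{\theta p}}\,d\nu \;\approx\; \int_{\oXeps\setminus E}\frac{|\Ext u|^p}{d_{\oXeps\setminus E}^{p}}\,d\mu_\beta
\qquad\text{(and the ``reverse comparison for traces'')}
\]
— is not true as stated, and the proof cannot be run through it. For the trace direction the inequality $\int_{\oXeps\setminus E}|u|^p d_\eps(\cdot,E)^{-p}d\mu_\beta\lesssim \int_{Z\setminus E}|Tu|^p d(\cdot,E)^{-\theta p}d\nu$ fails outright: take $u\in\Lip_c(\oXeps\setminus E)$ supported deep inside the filling, away from $Z$ entirely; then $Tu\equiv 0$ while the left side is positive. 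For the extension direction, $\Ext u$ near a boundary point is an average of $u$ at the corresponding scale, and your claim that it ``stays close to $u(x)$ for a definite number of generations'' is false pointwise — the deviation is controlled only by the local oscillation of $u$, i.e.\ by energy, not by the weighted $L^p$ term. What is missing is precisely the structure the paper uses: in each direction one decomposes the Hardy left-hand side over a Whitney-type cover into a term involving the \emph{averages} ($(Tu)_{U_{i,j}}$, resp.\ $(\Ext u)_{B_{i,j}}$), which is the part that transfers to the Hardy inequality on the other side via the codimension relation $\mu_\beta(B)\simeq r^{\beta/\eps}\nu(B\cap Z)$, \emph{plus} a deviation term $|u-(Tu)_{U_{i,j}}|$ (resp.\ $|u-(\Ext u)_{B_{i,j}}|$) that is not comparable to any weighted $L^p$ quantity and must be absorbed directly into the energy: in the filling this takes a $(p,p)$-Poincar\'e inequality, a chaining argument along vertical chains of balls, auxiliary exponents $q<p$ with $q(1-\delta)>\beta/\eps$, and maximal-function boundedness; on $Z$ it uses the explicit averaging formula \eqref{eq:HypFillExtension} for $\Ext u$ together with H\"older and the doubling of $\nu$ to bound it by the Besov energy. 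Your sketch contains no mechanism for these deviation terms, and they are the bulk of the work.

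A second, smaller gap: your remark that vertices sitting above points near $E$ ``contribute nothing'' for test functions supported away from $E$ does not survive the requirement of uniform constants. A function in $\Lip_c(\oXeps\setminus E)$ vanishes only in some $u$-dependent neighborhood of $E$, so its support reaches arbitrarily close to $E$; the region of the filling lying over points close to $E$ (at heights comparable to or larger than their distance to $E$) carries genuine mass and must be estimated. In the paper this is exactly the $I_2$ part of the decomposition, where $d_\eps(x,E)\gtrsim\alpha^{-k}$ is governed by the height, the geometric series over generations up to $m(z)\simeq-\log_\alpha d(z,E)$ reproduces the weight $d(z,E)^{-\theta p}$, and one then invokes the fractional Hardy inequality for $Tu$ — so this region is handled through the boundary inequality, not dismissed.
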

The interlinking of the parameters $\beta$, $\theta$, and $p$ given by 
\begin{equation}\label{eq:relationship}
\frac{\beta}{\eps}=p(1-\theta)
\end{equation}
relates to the trace theory in \cite{BBS}, see Theorem~\ref{thm:HypFill} below.  Here $\eps>0$ is a fixed parameter from the hyperbolic filling construction.

 Theorem \ref{thm:HypFill Hardy} is conceptually simple, and it reinforces the general idea that non-local problems can be studied through proving equivalences with local problems in hyperbolic fillings. By the self-improvement results for $p$-Hardy inequalities established in  \cite{KZ}, we see that a $p$-Hardy inequality in the space $(\oXeps,d_\eps,\mu_\beta)$ implies a $q$-Hardy inequality for $q\in (p-\epsilon,p+\epsilon)$. Using the equivalence in Theorem \ref{thm:HypFill Hardy}, it then follows that a $(\theta,p)$-Hardy inequality in $Z$ gives us a $(q,\theta')$-Hardy inequality for some \emph{other} value  $\theta'$. The reason for this is that altering $p$ to $q$ in \eqref{eq:relationship} results in $\theta$ changing, as the results of \cite{KZ} keep $\beta$ fixed. This gives self-improvement along a single curve, see Figure \ref{fig:KoskelaZhongImprovement} in Section~\ref{sec:Self-Improvement}.  

Theorem~\ref{thm:HypFill Hardy} also provides a method to readily obtain results pertaining to fractional Hardy inequalities directly from known corresponding results for $p$-Hardy inequalities.  We illustrate this in Section~\ref{sec:Examples} with Proposition~\ref{prop:JuhaFractional}, which gives a sufficient condition for $(\theta,p)$-Hardy inequalities in terms of Assouad codimensions.  Such a result was established for $p$-Hardy inequalities in \cite{L2}, and Theorem~\ref{thm:HypFill Hardy} allows us to easily obtain the fractional analog.  Using this proposition, we show that the punctured unit ball in $\R^n$ satisfies a $(\theta,p)$-Hardy inequality when $\theta p<n$, see Example~\ref{ex:punctured ball}.    

\subsection{Towards full improvement: regularizable weights}

In order to use the hyperbolic filling to get full self-improvement of the fractional Hardy inequality, that is, with respect to the parameters $\theta$ and $p$ \emph{independently}, \eqref{eq:relationship} requires that we are able to change the $\beta$-parameter. Changing the $\beta$-parameter in $(\oXeps,d_\eps,\mu_\beta)$ corresponds to adjusting the reference measure $\mu_\beta$ with a doubling weight $w$. Thus, one needs a \emph{weighted} self-improvement result for $p$-Hardy inequalities. The only general result of this type that we are aware of is \cite{L2}, where certain distance weights are used. However, the weight $w$ is not of this type. This prompts us to show a weighted self-improvement for a more general class of weights, which includes the weights needed to alter $\mu_\beta$ to $\mu_{\beta'}$. 

The weighted self-improvement result may be of general interest. The general question is the following. Under which assumptions on $w\in L^1_{\rm loc}(X,\mu)$ (and the space $X$) does \eqref{eq:phardy} imply
\begin{equation}\label{eq:phardyweight}
\int_\Omega \frac{|u(x)|^p}{d_\Omega(x)^p} wd\mu \leq C_p \int_\Omega g_u^p wd\mu?
\end{equation}
We give a sufficient condition with two assumptions. The assumptions are a bit technical, but can be summarized as follows.
\begin{enumerate}
    \item $p$-admissibility: The measure $wd\mu$ is doubling and supports a $(1,p)$-Poincar\'e inequality.
    \item $\delta$-regularizability (in $\Omega$): Whenever $B_x=B(x,d_\Omega(x)/8)$ and $B_y = B(y,d_\Omega(y)/8)$ are two close enough balls, then \[
    |w_{B_x}-w_{B_y}|\leq \delta w_{B_x}.
    \]
    Here $w_A=\frac{1}{\mu(A)}\int_A w d\mu$ for any Borel set $A$ with $\mu(A)>0$.
\end{enumerate}
The latter of these is a natural condition involving \emph{Whitney scales}, i.e. balls $B(x,d_\Omega(x)/C)$ for $C>1$. The details are given in Definition~\ref{def:p-admissibile Whitney} and Definition \ref{def:reg weights}. In terms of these conditions, we obtain the following weighted self-improvement result; see also Remark \ref{rem:Juhas weights}.

\begin{thm}\label{thm:delta reg improvement}
    Let $(X,d,\mu)$ be a complete, doubling metric measure space, and let $1<p<\infty$. Suppose that an open set $\Omega\subset X$ satisfies a $p$-Hardy inequality with respect to $\mu$, with constant $C_p\ge 1$.  Then there exists a constant $\delta_p>0$, such that if $w$ is 
    \begin{itemize}
        \item[(i)]$p$-admissible, 
        \item[(ii)]$\delta$-regularizable in $\Omega$ for some $0<\delta\le\delta_p$,
    \end{itemize}
    then $\Omega$ satisfies a $p$-Hardy inequality with respect to $w\,d\mu$, with constant depending only on $p$, $C_\mu$, and the $p$-admissibility constants of $w$.  
    
    We may take $\delta_p=\frac{p}{2C_0}(2C_p)^{-1/p}$, where $C_0$ is the constant from Lemma~\ref{lem:regularized gradient} which depends only on $C_\mu$.
\end{thm}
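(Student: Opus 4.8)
The plan is to run a weighted version of the Koskela--Zhong self-improvement machinery, using the $p$-admissibility of $wd\mu$ to supply all the analytic tools (doubling, Poincar\'e, hence a Sobolev-type inequality on balls) and using $\delta$-regularizability only to control the unavoidable error terms. First I would set up the standard reduction: it suffices to prove the weighted Hardy inequality \eqref{eq:phardyweight} for $u\in\Lip_c(\Omega)$, and the quantity to be estimated is $\int_\Omega \frac{|u|^p}{d_\Omega^p}wd\mu$. The engine is a Whitney-type ball covering of $\Omega$: cover $\Omega$ by balls $B_i=B(x_i,d_\Omega(x_i)/8)$ with bounded overlap, so that on each $B_i$ the distance $d_\Omega$ is comparable to the fixed radius $r_i:=d_\Omega(x_i)$. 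On each Whitney ball the weight is \emph{almost constant}: by $\delta$-regularizability and a chaining argument along adjacent Whitney balls, $w_{B}\approx w_{B_i}$ on a fixed dilate $\lambda B_i$, with multiplicative constants that degrade only like $(1+C\delta)^{N}$ where $N$ depends on $\lambda$ and $C_\mu$ — this is the point of Lemma~\ref{lem:regularized gradient} and where the explicit $C_0$ enters.

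Next I would import the \emph{unweighted} $p$-Hardy inequality, which we are given with constant $C_p$, and try to transfer it to $wd\mu$ locally. The cleanest route: since $wd\mu$ is $p$-admissible, it supports a $(1,p)$-Poincar\'e inequality, and on a single Whitney ball $B_i$ the weighted measure is comparable to $w_{B_i}\,\mu$ up to the factor $(1+C\delta)$; hence a weighted localized Hardy/Poincar\'e estimate on $B_i$ follows from the unweighted one at cost $(1+C\delta)^{C}$. Summing over $i$ with bounded overlap, and using that $d_\Omega\approx r_i$ on $B_i$, produces
\[
\int_\Omega \frac{|u|^p}{d_\Omega^p}\,wd\mu \;\le\; A(1+C\delta)^{C}\int_\Omega g_u^p\,wd\mu \;+\; A(1+C\delta)^{C}\sum_i \frac{1}{r_i^p}\int_{B_i}|u - u_{B_i}|^p\,w d\mu,
\]
where the first term is the desired right-hand side (up to constants) and the second is the error coming from replacing $u$ by its averages. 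The error term is then absorbed: by the weighted Poincar\'e inequality each summand is bounded by $C r_i^{p}\cdot r_i^{-p}\int_{\lambda B_i} g_u^p\,wd\mu \cdot$(a small factor), and crucially the telescoping/chaining that produced the Whitney covering lets the total error be bounded by $\kappa(\delta)\int_\Omega\frac{|u|^p}{d_\Omega^p}wd\mu$ where $\kappa(\delta)\to 0$ as $\delta\to0$; choosing $\delta\le\delta_p$ with $\kappa(\delta_p)\le \tfrac12$ lets us subtract the error from the left-hand side. Tracking the constants through Lemma~\ref{lem:regularized gradient} gives exactly the threshold $\delta_p=\frac{p}{2C_0}(2C_p)^{-1/p}$.

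The main obstacle I anticipate is the \emph{error absorption step} — making the chaining argument quantitative enough that the accumulated weight-oscillation factor across a Whitney chain is $(1+C\delta)^{N}$ with $N$ independent of $u$ and of the particular chain, and then showing the resulting error constant genuinely tends to $0$ with $\delta$ rather than merely staying bounded. This is where the precise form of $\delta$-regularizability (oscillation controlled by $\delta w_{B_x}$, i.e. a \emph{relative} bound, so errors compound multiplicatively rather than additively) is essential, and where the dependence of $C_0$ on $C_\mu$ alone must be verified; everything else (bounded overlap of Whitney balls, comparability of $d_\Omega$ on each ball, the weighted Poincar\'e-to-Sobolev upgrade) is standard in this setting. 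A secondary technical point is that we must not assume $\Omega$ bounded or $w$ globally doubling beyond what $p$-admissibility gives, so all estimates must be local to Whitney balls and summed, never invoking a global Sobolev inequality on $\Omega$.
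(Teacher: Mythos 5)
Your scaffolding (Whitney balls with bounded overlap, $d_\Omega\simeq r_i$ on $B_i$, weighted Poincar\'e inequality from $p$-admissibility, maximal-function summation) matches the paper's, but the core step of your argument has a genuine gap: the ``local transfer'' of the unweighted Hardy inequality to $w\,d\mu$ does not exist. There is no localized Hardy inequality on a single Whitney ball --- on $B_i$ it would read $r_i^{-p}\int_{B_i}|u|^p\,d\mu\lesssim\int_{B_i}g_u^p\,d\mu$, which fails for functions with nonzero average --- and freezing the weight at the value $w_{B_i}$ and invoking the \emph{global} unweighted inequality does not help either, because $\delta$-regularizability only controls the ratio of $w$ between \emph{adjacent} Whitney balls, so $w_{B_i}$ can vary by unbounded factors across $\Omega$ and cannot be pulled out of a global inequality. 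Relatedly, your displayed estimate misidentifies the error term: the oscillation term $\sum_i r_i^{-p}\int_{B_i}|u-u_{B_i}|^p\,w\,d\mu$ is the harmless one --- it is bounded by $C\int_\Omega g_u^p\,w\,d\mu$ via the weighted $(p,q)$-Poincar\'e inequality and the boundedness of the weighted maximal function, with a constant independent of $\delta$, so nothing about it is ``small in $\delta$'' and no absorption is needed there. The genuinely hard term, the contribution of the averages $\sum_i |u_{B_i}|^p r_i^{-p}\,w(B_i)$ (equivalently $\int_\Omega |\wtil u|^p d_\Omega^{-p}\,\wtil w\,d\mu$, where $\wtil u$, $\wtil w$ are the discrete convolutions of $u$ and $w$), never appears in your scheme, and it is the only place where the hypothesis with constant $C_p$ can enter.

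The missing idea --- which is how the paper closes this step and how the threshold $\delta_p=\frac{p}{2C_0}(2C_p)^{-1/p}$ actually arises --- is to fold the regularized weight into the test function: set $v:=|\wtil u|\,\wtil w^{1/p}$ and apply the \emph{unweighted} $p$-Hardy inequality once, globally, to $v$. By the Leibniz and chain rules together with Lemma~\ref{lem:regularized gradient} (which is a gradient bound $g_{\wtil w}\le C_0\delta\,d_\Omega^{-1}\wtil w$ for the convolved weight, not a chaining comparability statement), one gets $g_v\le g_{\wtil u}\wtil w^{1/p}+\tfrac{C_0\delta}{p}\,d_\Omega^{-1}\wtil w^{1/p}|\wtil u|$, so the Hardy inequality yields
\begin{equation*}
\int_\Omega\frac{|\wtil u|^p}{d_\Omega^{p}}\,\wtil w\,d\mu\le 2^pC_p\int_\Omega g_{\wtil u}^p\,\wtil w\,d\mu+2^pC_p\Bigl(\frac{C_0\delta}{p}\Bigr)^p\int_\Omega\frac{|\wtil u|^p}{d_\Omega^{p}}\,\wtil w\,d\mu,
\end{equation*}
and the last term is absorbed into the left-hand side precisely when $\delta\le\delta_p$; finally $\int_\Omega g_{\wtil u}^p\,\wtil w\,d\mu\lesssim\int_\Omega g_u^p\,w\,d\mu$ by the weighted Poincar\'e inequality and the maximal function. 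Without this (or an equivalent) device, your claimed absorption with $\kappa(\delta)\to 0$ has no source, and the explicit constant $\delta_p$ you quote cannot be produced by the argument you describe.
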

While the above theorem is stated for $p$-admissible weights, this assumption can be weakened to require that the weighted measure supports a Poincar\'e inequality on Whitney balls in $\Omega$, see Definition~\ref{def:p-admissibile Whitney}.  Theorem~\ref{thm:delta reg improvement}, with this slightly weaker admissibility condition, then recovers the weighted self-improvement result \cite[Theorem~1]{L2}, given for weights of the form $d_\Omega^\beta$, see Remark~\ref{rem:Juhas weights}.  

We use this theorem to obtain the full self-improvement in Theorem \ref{thm:FracHardyImprovement}. However, this result is also new for the classical $p$-Hardy inequality, and it strengthens the main results of \cite{L} by allowing more general weights. In the one dimensional Euclidean case, weighted Hardy inequalities and characterizations thereof have appeared in \cite{Muckenhoupt}.

The condition of $\delta$-regularizability arose from examining the proofs in \cite{dyda,L,L2}. The rough idea is that the Hardy inequality contains two levels of estimates: estimates at Whitney scales, and estimates at large scales. The Whitney scale information relates to $p$-admissibility. The self-improvement of the large scale estimates is related to $\delta$-regularizability -- morally since a $\delta$-regularizable weight will not alter the large scale estimate too drastically. A close examination of the proofs in \cite{dyda,L,L2} shows that the same phenomenon has been used previously. The appearance of Whitney scales is also natural from the perspective of characterizations for Hardy inequalities that have appeared in the literature such as \cite{I}.

Using Theorem~\ref{thm:delta reg improvement}, it follows that a $p$-Hardy inequality in the hyperbolic filling $(\oXeps,d_\eps,\mu_\beta)$ persists when changing the measure from $\mu_\beta$ to $\mu_{\beta'}$.  Theorem~\ref{thm:HypFill Hardy} then gives us that a $(\theta,p)$-Hardy inequality on $Z$ implies a $(\theta',p)$-Hardy inequality, with $\theta'$ arising from the perturbation of $\beta'$ in \eqref{eq:relationship}.  Combining this argument with the self-improvement results from \cite{KZ}, we are able to pass from the one-parameter self-improvement, see Figure~\ref{fig:KoskelaZhongImprovement} in Section~\ref{sec:Self-Improvement}, to the full self-improvement result, see Figure~\ref{fig:full improvement}, which gives us Theorem~\ref{thm:FracHardyImprovement}.

\subsection{Organization of the paper}
We will first present the necessary background in Section \ref{sec:preliminaries}. Then, in Section \ref{sec:Reg Weights} we give a proof of the weighted self-improvement of $p$-Hardy inequalities from Theorem \ref{thm:delta reg improvement}. This section is independent and a reader interested only in weighted self-improvement results can read this independent of the other parts of the paper. Hyperbolic fillings are defined in Section \ref{sec:HypFill}, and Theorem \ref{thm:HypFill Hardy} on equivalences is proved in Section \ref{sec:FracHardy-HypFill}. The pieces are put together in Section \ref{sec:Self-Improvement}, where we first present a localization argument, allowing us to reduce the proof of Theorem~\ref{thm:FracHardyImprovement} to the case of a compact metric measure space.  We then use weighted self-improvement and \cite{KZ} to improve the $p$-Hardy inequality in the hyperbolic filling, and by passing through the equivalence, we obtain the self-improvement for fractional $(\theta,p)$-Hardy inequalities in $Z$. Finally, in Section \ref{sec:Examples}, we present some applications of  Theorem~\ref{thm:HypFill Hardy}, by proving Proposition~\ref{prop:JuhaFractional} and Example~\ref{ex:punctured ball}.

\section{Preliminaries}\label{sec:preliminaries}
In this section we provide the necessary background information and definitions used throughout the paper.  In Section~\ref{sec:HypFill}, we will introduce particular notation to denote the uniformized hyperbolic filling $(\oXeps,d_\eps,\mu_\beta)$ of a compact metric measure space $(Z,d,\nu)$.  To avoid confusion with this particular notation, however, we will present the general background notions in this section using $(X,d,\mu)$ to denote a generic metric measure space.  

Throughout this paper, we let $C>0$ denote a constant which, unless otherwise specified, depends only on the structural constants of the metric measure space, such as the doubling constant for example.  Its precise value is not of interest to us, and may change with each occurrence, even within the same line.  Furthermore, given quantities $A$ and $B$, we will often use the notation $A\simeq B$ to mean that there exists a constant $C\ge 1$ such that $C^{-1} A\le B\le CA$. Likewise, we use $A\lesssim B$ and $A\gtrsim B$ if the left and right inequalities hold, respectively.

\subsection{Doubling measures}
For $x\in X$ and $r>0$, we denote by $B(x,r)$ the ball $\{y\in X:d(x,y)<r\}$.  Given a ball $B:=B(x,r)\subset X$, and a constant $C>0$, we denote by $CB$ the ball $B(x,Cr)$.  Given $x\in X$ and $E,F\subset X$, we also denote by $d(E,F)$ and $d(x,E)$, the distance between sets $E$ and $F$, and the distance between $x$ and $E$, respectively, as induced by the metric $d$.  

We say that a Borel measure $\mu$ is \emph{doubling}, if there exists a constant $C_\mu\ge 1$ such that 
\begin{equation*}
    0<\mu(B(x,2r))\le C_\mu\mu(B(x,r))<\infty
\end{equation*}
for all $x\in X$ and $r>0$. By iterating this condition, there exist constants $Q>0$ and $C\ge 1$, depending only on $C_\mu$, such that for all $x\in X$, $0<r\le R$, and $y\in B(x,R)$, we have
\begin{equation*}\label{eq:LMB}
    \frac{\mu(B(y,r))}{\mu(B(x,R))}\ge C^{-1}\left(\frac{r}{R}\right)^Q.
\end{equation*}
Here, we may take $Q=\log_2C_\mu$.  If $\mu$ is doubling, many classical results from harmonic analysis hold in $(X,d,\mu)$, such as the Lebesgue differentiation theorem and Hardy-Littlewood maximal function theorem, see \cite{H} for example.  Such metric measure spaces also admit the following Whitney decomposition of an open set.  For proof of the following lemma see \cite[Section~4]{HKST}, for example.

\begin{lem}\label{lem:Whitney Decomp}
Let $(X,d,\mu)$ be a doubling metric measure space, and let $\Omega\subset X$ be open, with $X\setminus\Omega\ne\varnothing$. Then there exists a countable collection of balls $\mathcal{W}_\Omega:=\{B(x_i,r_i)=:B_i\}_{i\in\N}$ such that 
\[
\Omega=\bigcup_i B_i
\]
and that 
\[
\sum_i\chi_{6B_i}\le C,
\]
with \[r_i=\frac{1}{8}d_\Omega(x_i).\]
Here the constant $C\ge 1$ depends only on the doubling constant of $\mu$. Moreover, there exists a Lipschitz partition of unity $\{\phii_i\}_{i}$ subordinate to $\mathcal{W}_\Omega$.  That is, each $\phii_i$ is $C/r_i$-Lipschitz, with $0\le\phii_i\le\chi_{2B_i}$, and 
\[
\sum_{i}\phii_i(x)=1
\]
for each $x\in\Omega$.  Furthermore, there exists $C\ge 1$, depending only on the doubling constant of $\mu$, such that $\phii_i|_{B_i}\ge C^{-1}$ for each $i$.    
\end{lem}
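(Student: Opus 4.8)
The plan is to obtain $\W_\Omega$ by a Vitali-type selection of balls at the Whitney scale, and then to build the partition of unity by normalizing elementary Lipschitz cutoffs.

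\textbf{Step 1 (selection of the balls).} For each $x\in\Omega$ I would set $\rho_x := d_\Omega(x)/40$, which is finite and positive since $X\setminus\Omega\ne\varnothing$; the balls $\{B(x,\rho_x)\}_{x\in\Omega}$ cover $\Omega$. Splitting this family into countably many subfamilies according to the dyadic magnitude of $\rho_x$ and applying the $5r$-covering lemma within each, I extract a countable pairwise disjoint subfamily $\{B(x_i,\rho_{x_i})\}_i$ whose fivefold dilates still cover $\Omega$. Setting $r_i:=5\rho_{x_i}=d_\Omega(x_i)/8$ and $B_i:=B(x_i,r_i)$, I obtain $B_i\subset\Omega$, $\Omega=\bigcup_i B_i$, and the pairwise disjointness of the balls $\tfrac15 B_i$; countability is automatic since each $\tfrac15 B_i$ has positive $\mu$-measure.

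\textbf{Step 2 (bounded overlap).} Here I would use that $d_\Omega$ is $1$-Lipschitz: if $6B_i\cap 6B_j\ne\varnothing$, then $|8r_i-8r_j|\le d(x_i,x_j)\le 6(r_i+r_j)$, which forces $r_i\simeq r_j$ with an absolute constant. Then, fixing $z\in\Omega$, the indices $i$ with $z\in 6B_i$ yield pairwise disjoint balls $\tfrac15 B_i$, all contained in a single ball $B(z,R)$ with $R$ comparable to their (mutually comparable) radii, while doubling gives $\mu(\tfrac15 B_i)\gtrsim\mu(B(z,R))$; summing over such $i$ bounds their number by a constant depending only on $C_\mu$. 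This yields $\sum_i\chi_{6B_i}\le C$, and in particular the analogous bound for $\{2B_i\}$.

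\textbf{Step 3 (partition of unity).} For each $i$ I would take the cutoff $\psi_i(x)=\max\{0,\min\{1,\,2-d(x,x_i)/r_i\}\}$, which equals $1$ on $B_i$, is supported in $2B_i$, and is $(1/r_i)$-Lipschitz. With $\Psi:=\sum_i\psi_i$, the covering property gives $\Psi\ge 1$ on $\Omega$, the bounded overlap of $\{2B_i\}$ gives $\Psi\le C$ there, and at each point only $\le C$ of the $\psi_i$ are nonzero (with mutually comparable radii), so $\Lip\Psi(x)\lesssim 1/r_i$ whenever $x\in 2B_i$. Then $\phii_i:=\psi_i/\Psi$ satisfies $0\le\phii_i\le\chi_{2B_i}$, $\sum_i\phii_i\equiv 1$ on $\Omega$, and $\phii_i\ge C^{-1}$ on $B_i$ (since $\psi_i=1$ and $\Psi\le C$ there); the bound $\Lip\phii_i\lesssim 1/r_i$ follows from the quotient estimate for $\psi_i/\Psi$ using $\Psi\ge 1$, $\psi_i\le 1$, and the above bounds on $\Lip\psi_i$ and $\Lip\Psi$.

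\textbf{Main obstacle.} The conceptual content lies in Step 2, where the doubling hypothesis is genuinely used; the rest is bookkeeping. The points requiring care are: arranging the dilation factors in Step 1 so that the selected balls have radius \emph{exactly} $d_\Omega(x_i)/8$ while still covering $\Omega$; handling unbounded $\Omega$ so that the selected family stays countable; and carrying out the quotient computation in Step 3 with the pointwise Lipschitz constant in place of a gradient.
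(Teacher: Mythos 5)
The paper does not prove this lemma at all; it simply cites \cite[Section~4]{HKST}, and your argument is the standard construction found there (Vitali-type selection at Whitney scale, bounded overlap via doubling plus the $1$-Lipschitz property of $d_\Omega$, and a normalized-cutoff partition of unity), so in substance you have supplied exactly the intended proof. Two points of care. First, in Step 1 your selection does not literally yield a \emph{globally} pairwise disjoint family: applying the $5r$-covering lemma separately in each dyadic radius class only gives disjointness within each class, and balls selected from different classes may intersect. This is harmless: in Step 2 the indices $i$ with $z\in 6B_i$ have radii comparable within a factor $7$, hence lie in boundedly many dyadic classes, so you can run the volume-counting argument class by class and multiply by that bounded number; alternatively, select greedily in decreasing order of scale (discarding any ball meeting a previously selected one), which does produce a single disjoint family whose $5$-dilates cover, since a discarded ball always meets a selected ball of at least comparable radius. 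Second, in Step 3 the passage from pointwise Lipschitz bounds on $\Psi$ to the global bound $\Lip\phii_i\lesssim 1/r_i$ needs the usual two-case argument rather than a gradient-type integration (which is unavailable in a general metric space): if $d(x,y)\ge r_i$ the estimate is trivial because $0\le\phii_i\le 1$, while if $d(x,y)<r_i$ and (say) $y\in 2B_i$, then every $j$ with $\psi_j$ not vanishing at $x$ or $y$ satisfies $6B_j\cap 6B_i\ne\varnothing$, hence $r_j\simeq r_i$, and the quotient estimate with $\Psi\ge 1$, $\psi_i\le 1$ closes the argument. With these two routine repairs your proof is complete and matches the cited reference's approach.
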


\subsection{Newton-Sobolev and Besov classes}
Let $1\le p<\infty$.  Given a family $\Gamma$ of non-constant, compact, rectifiable curves, we define the \emph{$p$-modulus} of $\Gamma$ by 
\[
\Mod_p(\Gamma):=\inf_\rho\int_ X\rho^p\,d\mu,
\]
where the infimum is over all non-negative Borel functions $\rho:X\to[0,\infty]$ such that $\int_\gamma\rho\,ds\ge 1$ for all $\gamma\in\Gamma$.  Given a function $u:X\to\overline\R$, we say that a Borel function $g:X\to[0,\infty]$ is an \emph{upper gradient} of $u$ if the following holds for all non-constant, compact, rectifiable curves $\gamma:[a,b]\to X$:
\begin{equation*}
    |u(\gamma(b))-u(\gamma(a))|\le\int_\gamma g\,ds
\end{equation*}
whenever $u(\gamma(b))$ and $u(\gamma(a))$ are both finite, and $\int_\gamma g\,ds=\infty$ otherwise. We say that $g$ is a \emph{$p$-weak upper gradient of $u$} if the family of curves where the above inequality fails has $p$-modulus zero. 

We define $\wtil N^{1,p}(X,\mu)$ to be the class of all functions in $L^p(X,\mu)$ which have an upper gradient belonging to $L^p(X,\mu)$.  We then define 
\begin{equation*}
    \|u\|_{\wtil N^{1,p}(X,\mu)}:=\|u\|_{L^p(X,\mu)}+\inf_g\|g\|_{L^p(X,\mu)},
\end{equation*}
where the infimum is taken over all upper gradients $g$ of $u$.  Defining the equivalence relation $\sim$ in $\wtil N^{1,p}(X,\mu)$ by $u\sim v$ if and only if $\|u-v\|_{\wtil N^{1,p}(X,\mu)}=0$, we then define the \emph{Newton-Sobolev space} $N^{1,p}(X,\mu)$ to be $\wtil N^{1,p}(X,\mu)/\sim$, equipped with the norm $\|\cdot\|_{N^{1,p}(X,\mu)}:=\|\cdot\|_{\wtil N^{1,p}(X,\mu)}$.  We can similarly define $N^{1,p}(\Omega,\mu)$ for any open $\Omega\subset X$.  Each $u\in N^{1,p}(X,\mu)$ has a \emph{minimal $p$-weak upper gradient}, denoted $g_u$, which is unique $\mu$-a.e., see \cite[Chapter~6]{HKST}. For more background and historical references on modulus, upper gradients and Sobolev spaces see \cite{HKST,BBS,H,shanmun,cheeger}.

For $0<\theta<1$ and $1\le p<\infty$, we define the \emph{Besov energy} of a function $u\in L^1_\loc(X,\mu)$ by 
\begin{equation}\label{eq:Besov energy}
    \|u\|^p_{B^\theta_{p,p}(X,\mu)}:=\int_X\int_X\frac{|u(x)-u(y)|^p}{d(x,y)^{\theta p}\mu(B(x,d(x,y)))}d\mu(y)d\mu(x).
\end{equation}
The \emph{Besov space} $B^\theta_{p,p}(X,\mu)$ is then defined as the set of all functions in $L^p(X,\mu)$ for which this energy is finite.  If $X=\R^n$ and $\mu=\Leb^n$ is the standard $n$-dimensional Lebesgue measure, then $B^\theta_{p,p}(X,\mu)$ corresponds to the fractional Sobolev space $W^{\theta,p}(\R^n)$.  It was shown in \cite{GKS} that when $(X,d,\mu)$ is a doubling metric measure space supporting a $(1,p)$-Poincar\'e inequality, defined below, $B^\theta_{p,p}(X,\mu)$ coincides with the real interpolation space $(L^p(X,\mu),KS^{1,p}(X,\mu))_{\theta,p}$, where $KS^{1,p}(X,\mu)$ is the Korevaar-Schoen space. See the nice survey \cite{DNPVH} and \cite{Adams, BBS,EBGKSS,CaSi,GKS,JW} for more discussion and references.

\subsection{Poincar\'e inequalities and $p$-admissible weights}
Let $1\le p,q<\infty$.  We say that $(X,d,\mu)$ supports a \emph{$(q,p)$-Poincar\'e inequality} if there exist constants $C_{PI},\lambda\ge 1$ such that the following holds for all balls $B\subset X$ and function-upper gradient pairs $(u,g)$:
\begin{equation*}
    \left(\fint_B|u-u_B|^q\,d\mu\right)^{1/q}\le C_{PI}\rad(B)\left(\fint_{\lambda B}g^p\,d\mu\right)^{1/p}.
\end{equation*}
Here and throughout the paper, we use the notation 
\[
u_B:=\fint_B u\,d\mu=\frac{1}{\mu(B)}\int_Bu\,d\mu.
\]

If $(X,d,\mu)$ is a geodesic space supporting a $(q,p)$-Poincar\'e inequality, then we may take $\lambda=1$, as shown in \cite{HaKo}.  We note that if $(X,d,\mu)$ is doubling and supports a $(1,p)$-Poincar\'e inequality, then Lipschitz functions are dense in $N^{1,p}(X,\mu)$, see \cite[Theorem~8.2.1]{HKST}.

By a \emph{weight}, we mean a non-negative function $w\in L^1_\loc(X,\mu)$.  To such a weight, we associate the weighted measure $\mu_w:= w\,d\mu$. We say $w$ is a \emph{doubling weight} if the measure $\mu_w$ is doubling. Moreover, we say that a weight $w$ is \emph{$p$-admissible} if $\mu_w$ is doubling and supports a $(1,p)$-Poincar\'e inequality. The terminology is due to the classic reference of Heinonen, Kilpeläinen and Martio \cite{HKM}.  While Theorem~\ref{thm:delta reg improvement} is stated in terms of $p$-admissible weights, the result holds true under a slightly weaker $p$-admissibility condition, namely that $\mu_w$ supports a Poincar\'e inequality on balls at Whitney scales, see Remark~\ref{rem:Juhas weights}.  In terms of Lemma~\ref{lem:Whitney Decomp} above, we give the following definition:

\begin{defn}\label{def:p-admissibile Whitney}
    Let $\Omega\subset X$ be open, and let $1<p<\infty$.  We say that a non-negative function $w\in L^1_\loc(X,\mu)$ is \emph{$p$-admissible at Whitney scales in $\Omega$} if the following conditions are satisfied:
    \begin{itemize}
        \item[(i)] $\mu_w:=w\,d\mu$ is doubling,  
        \item[(ii)] there exist $C,\lambda\ge 1$ such that for all $x\in\Omega$, $0<r\le 3d_\Omega(x)/4$, and function-upper gradient pairs $(u,g)$, we have
        \[
        \fint_{B(x,r)}|u-u_{B(x,r)}|d\mu_w\le Cr\left(\fint_{\lambda B(x,r)}g^p\,d\mu_w\right)^{1/p},
        \]
        where here $u_{B(x,r)}:=\frac{1}{\mu_w(B(x,r))}\int_{B(x,r)} u\,d\mu_w$.
     \end{itemize} 
       
\end{defn}

Any Muckenhoupt $A_p$-weight is a $p$-admissible weight. In Euclidean spaces, this was shown in \cite[Section 15.22]{HKM}. For metric measure spaces that satisfy a $(1,1)$-Poincar\'e inequality this is an unstated folklore result, and can be obtained by applying the proof from \cite[Section 15.22]{HKM} together with a Riesz-potential characterization of the Poincar\'e inequality from e.g. \cite{Keith}. Alternatively, a more direct argument is to modify the proof of \cite[Theorem 4]{B} which uses the equivalent characterization of the Poincar\'e inequality using a pointwise condition, see \cite[Theorems 3.1 and 3.2]{HaKo}. Thus, Muckenhoupt weights give a large class of examples of admissible weights, see e.g. \cite{BBK,B,KKM}. Muckenhoupt weights on the other hand are often constructed as distance weights, see \cite{DILTV,Aimar}. In our case, we will also construct $p$-admissible weights using distance functions, but the Poincar\'e inequality in our case can be checked more directly than through the Muckenhoupt condition. 

\subsection{Hardy inequalities}
\begin{defn}\label{def:p-Hardy}
    Let $\Omega\subset X$ be open, and let $1<p<\infty$. We say that $\Omega$ satisfies a \emph{$p$-Hardy inequality} (with respect to $\mu$) if there exist a constant $C_p\ge 1$ such that the following holds for all $u\in\Lip_c(\Omega)$:
    \begin{equation*}
        \int_\Omega\frac{|u(x)|^p}{d(x,X\setminus\Omega)^p}d\mu(x)\le C_p\int_{\Omega}g_u^p\,d\mu.
    \end{equation*}
\end{defn}

As shown by Koskela and Zhong in \cite{KZ}, $p$-Hardy inequalities possess the following self-improvement property:  

\begin{thm}\cite[Theorem~1.2]{KZ}\label{thm:KoskelaZhong}
    Let $(X,d,\mu)$ be a doubling metric measure space supporting a $(1,p)$-Poincar\'e inequality. Let $\Omega\subset X$ be an open set, and suppose that $\Omega$ satisfies a $p$-Hardy inequality with respect to $\mu$.  Then there exists $\eps_1:=\eps_1(p,C_p,C_\mu,\lambda,C_{PI})$, such that $\Omega$ satisfies a $q$-Hardy inequality with respect to $\mu$ for all $q\in(p-\eps_1,p+\eps_1)$.  Moreover, there exists $C_1:=C_1(p,C_p,C_\mu,\lambda,C_{PI})$ such that $C_q=C_1$ for all such $q$.
\end{thm}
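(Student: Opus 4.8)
The plan is to reconstruct the Koskela--Zhong argument (\cite[Theorem~1.2]{KZ}) in this generality. Fix an open set $\Omega$ satisfying a $p$-Hardy inequality with constant $C_p$. Since $|u|$ has the same minimal $p$-weak upper gradient as $u$ almost everywhere, it suffices to prove the $q$-Hardy inequality for nonnegative $u\in\Lip_c(\Omega)$, with a constant depending only on $p$, $C_p$, $C_\mu$, $\lambda$, $C_{PI}$ and \emph{not} on $q$ in the target interval. The argument has two ingredients: a pointwise ``Whitney-chain plus maximal function'' bound for $u/d_\Omega$, and a reverse-H\"older/Gehring-type self-improvement fueled by applying the $p$-Hardy inequality to powers of $u$.

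For the first ingredient, fix $x\in\Omega$ and telescope along the concentric balls $B_k:=B(x,2^{-k-1}d_\Omega(x))\subset\Omega$. By continuity $\fint_{B_k}u\to u(x)$, and the $(1,p)$-Poincar\'e inequality gives $|\fint_{B_{k+1}}u-\fint_{B_k}u|\le C\,2^{-k}d_\Omega(x)\bigl(\fint_{\lambda B_k}g_u^p\,d\mu\bigr)^{1/p}$; summing over $k$ yields
\[
\frac{u(x)}{d_\Omega(x)}\le\frac{C}{d_\Omega(x)}\fint_{B(x,\,d_\Omega(x)/2)}u\,d\mu+C\bigl(\mathcal{M}_{d_\Omega(x)}(g_u^p)(x)\bigr)^{1/p},
\]
where $\mathcal{M}_R$ is the Hardy--Littlewood maximal operator restricted to radii $\le R$. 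The second term is controlled in $L^q(\Omega)$ by $C\|g_u\|_{L^q}$, uniformly, for every $q>p$ via the maximal function theorem; to reach $q<p$ one first lowers the Poincar\'e exponent to some $s<p$ using the self-improvement (openness) of the Poincar\'e condition, replacing $p$ by $s$ above. The first, genuinely \emph{boundary}, term is handled through the Whitney decomposition $\mathcal{W}_\Omega=\{B_i\}$ of $\Omega$ from Lemma~\ref{lem:Whitney Decomp}: using its bounded overlap and $d_\Omega\simeq r_i$ on $B_i$, the estimate $\int_\Omega(u/d_\Omega)^q\,d\mu\lesssim\int_\Omega g_u^q\,d\mu$ reduces to the discrete Hardy-type inequality
\[
\sum_i\mu(B_i)\Bigl(\frac{u_{2B_i}}{r_i}\Bigr)^q\lesssim\int_\Omega g_u^q\,d\mu,
\]
which at $q=p$ follows from the hypothesis together with the maximal bound and the Poincar\'e inequality on the balls $B_i$.

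To upgrade the exponent one exploits the nonlinear structure of the Hardy inequality. Since $u\ge0$ and $u\in\Lip_c(\Omega)$, for $\alpha>0$ the truncated powers $\min(u,j)^{1+\alpha}$ (and, after a standard regularization near $\{u=0\}$, also exponents slightly below $1$) lie in $\Lip_c(\Omega)$ with minimal upper gradient $(1+\alpha)\min(u,j)^{\alpha}g_u$ on $\{u<j\}$. Testing the $p$-Hardy inequality against these and letting $j\to\infty$ by monotone convergence produces, after H\"older's inequality, a nonlinear reverse-H\"older inequality coupling the Hardy functional of $u$ at exponent $p$ with mixed integrals of $u/d_\Omega$ and $g_u$. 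Inserting the pointwise bound of the first ingredient and invoking a \emph{quantitative} Gehring-type lemma then yields an $\eps_1>0$, depending only on $p$, $C_p$, $C_\mu$, $\lambda$, $C_{PI}$, such that the discrete --- hence the continuous --- $q$-Hardy inequality holds for all $q\in(p-\eps_1,p+\eps_1)$ with a single constant $C_1$ determined by the same data; the direction $q>p$ uses the ``forward'' Gehring improvement and $q<p$ the ``backward'' one, which is why the Poincar\'e self-improvement enters only for $q<p$.

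The main obstacle is this last step: one must calibrate the exponents $\alpha$ in the test functions and set up the covering/stopping-time machinery so that the error terms generated are absorbable, while verifying throughout that no constant depends on $q$ or on $u$ --- this uniformity is precisely the assertion $C_q\equiv C_1$. A secondary, routine point in the metric setting is justifying that truncated powers of $u$ are admissible test functions with the stated upper gradients and passing to the limit, together with the Whitney-decomposition bookkeeping that converts the continuous Hardy inequality into its discrete counterpart and back.
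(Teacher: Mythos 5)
First, note that the paper does not prove this statement at all: Theorem~\ref{thm:KoskelaZhong} is imported verbatim from \cite[Theorem~1.2]{KZ}, so the only benchmark is Koskela--Zhong's own argument, which you are attempting to reconstruct. Your outline assembles reasonable raw materials (telescoping the $(1,p)$-Poincar\'e inequality to get a pointwise bound by a restricted maximal function plus a Whitney-scale average, discretizing the Hardy inequality over the Whitney balls of Lemma~\ref{lem:Whitney Decomp}, and testing the hypothesis with truncated powers of $u$), and these are of the same general flavour as the truncation-and-maximal-function scheme of \cite{KZ}. But the proof is not actually there: the decisive step --- passing from the $p$-Hardy hypothesis to the $q$-Hardy inequality for all $|q-p|<\eps_1$ with a constant independent of $q$ --- is delegated to an unspecified ``nonlinear reverse-H\"older inequality'' and a ``quantitative Gehring-type lemma'' that you neither state nor verify the hypotheses of. That is precisely where all the work in \cite{KZ} lies (you acknowledge this yourself: ``the main obstacle is this last step''), so what you have is a plan, not a proof.

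Two concrete points where the plan, as written, would fail or overreaches. (i) The downward direction $q<p$ is not a formal consequence of any Gehring-type mechanism: Gehring/reverse-H\"older self-improvement raises integrability exponents, and there is no off-the-shelf ``backward'' version that lowers the exponent on the gradient side of a Hardy--Sobolev-type inequality; in \cite{KZ} this direction comes from applying the Hardy inequality to truncations of $u$ at all dyadic levels, summing, and absorbing --- an argument tailored to the Hardy functional, not a citation to a generic lemma. (ii) Your only concrete device for $q<p$, lowering the Poincar\'e exponent to some $s<q$, invokes the openness of the Poincar\'e condition, i.e.\ Keith--Zhong \cite{KeZ}; this requires completeness of $X$, which is not among the hypotheses of the statement (and is not used in \cite{KZ}, which predates \cite{KeZ}), and in any case it only controls the maximal-function term in your pointwise bound --- it does nothing for the boundary term $\sum_i\mu(B_i)\bigl(u_{2B_i}/r_i\bigr)^q$, whose self-improvement in $q$ is the actual content of the theorem. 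Until you derive the reverse-H\"older-type estimate explicitly from the tested Hardy inequality and prove (or correctly quote, with matching hypotheses and constants) the improvement lemma that converts it into the two-sided range $q\in(p-\eps_1,p+\eps_1)$ with a single constant $C_1$, the argument has a genuine gap.
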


In addition to the above self-improvement result, $p$-Hardy inequalities also self-improve with respect to weights of the form $d(\cdot, X\setminus\Omega)^\beta$, as shown by Lehrb\"ack in \cite[Theorem~1]{L} and \cite[Proposition~6.4]{L2}.

\begin{thm}\cite[Theorem~1]{L}\label{thm:Lehrback}
Let $(X,d,\mu)$ be a doubling metric measure space supporting a $(1,p)$-Poincar\'e inequality. Let $\Omega\subset X$ be an open set, let $\beta_0\in\R$, and suppose that $\Omega$ satisfies a $p$-Hardy inequality with respect to $d(\cdot,X\setminus\Omega)^{\beta_0}\,d\mu$.  Then there exists $\eps:=\eps(p,\beta_0,C_p,C_\nu,\lambda, C_{PI})$ such that $\Omega$ satisfies a $p$-Hardy inequality with respect to $d(\cdot,X\setminus\Omega)^\beta\,d\mu$ for all $\beta\in(\beta_0-\eps,\beta_0+\eps)$, with constant independent of $\beta$.    
\end{thm}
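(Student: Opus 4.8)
The plan is to deduce Theorem~\ref{thm:Lehrback} from the weighted self-improvement result Theorem~\ref{thm:delta reg improvement} (applied with the Whitney-scale admissibility of Definition~\ref{def:p-admissibile Whitney}, cf. Remark~\ref{rem:Juhas weights}), by recognizing the family $\{d_\Omega^\beta\}$, for $\beta$ near $\beta_0$, as a family of $\delta$-regularizable perturbations of the base measure $d_\Omega^{\beta_0}\,d\mu$, with $\delta\to 0$ as $\beta\to\beta_0$.

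First I would change the base measure. Set $\tilde\mu:=d_\Omega^{\beta_0}\,d\mu$; this is a doubling measure, since the set of exponents $\beta$ for which $d_\Omega^\beta\,d\mu$ is doubling is open and contains $\beta_0$ (its being nonempty at $\beta_0$ is part of what makes the hypothesis meaningful). Passing, if necessary, to the completion of $X$ --- which inherits the doubling property and the $(1,p)$-Poincar\'e inequality and leaves $\Lip_c(\Omega)$, $d_\Omega$, and minimal weak upper gradients unchanged --- we may assume $(X,d,\tilde\mu)$ is a complete doubling metric measure space on which $\Omega$ satisfies a $p$-Hardy inequality, with constant $C_p$. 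For $\beta$ near $\beta_0$ put $w_\beta:=d_\Omega^{\beta-\beta_0}$, so that $w_\beta\,d\tilde\mu=d_\Omega^\beta\,d\mu$; the goal becomes to produce a $p$-Hardy inequality with respect to $w_\beta\,d\tilde\mu$ for all $\beta$ in a small interval around $\beta_0$.

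Next I would verify, uniformly for $\beta\in[\beta_0-\eps,\beta_0+\eps]$ with $\eps$ small, the two hypotheses of Theorem~\ref{thm:delta reg improvement} for $w_\beta$ over $(X,d,\tilde\mu)$. For $p$-admissibility at Whitney scales: $w_\beta\,d\tilde\mu=d_\Omega^\beta\,d\mu$ is doubling by the same openness of the doubling range (for $\eps$ small), and on any ball $B(x,r)$ with $r\le\tfrac34 d_\Omega(x)$ one has $d_\Omega\simeq d_\Omega(x)$ with comparison constant at most $4^{|\beta|}$, so the $(1,p)$-Poincar\'e inequality for $d_\Omega^\beta\,d\mu$ on such balls follows from the global $(1,p)$-Poincar\'e inequality for $\mu$, the constant factor $d_\Omega(x)^\beta$ cancelling; the resulting constants stay bounded as $\beta$ ranges over the compact interval. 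For $\delta$-regularizability in $\Omega$: given two ``close'' Whitney balls $B_x=B(x,d_\Omega(x)/8)$ and $B_y=B(y,d_\Omega(y)/8)$, one has $d_\Omega(x)\simeq d_\Omega(y)$ with a universal constant and $d_\Omega\simeq d_\Omega(x)$ on $B_x$ with constant $9/7$, so that $(w_\beta)_{B_x}\simeq d_\Omega(x)^{\beta-\beta_0}$ and likewise for $B_y$; this gives
\[
\frac{|(w_\beta)_{B_x}-(w_\beta)_{B_y}|}{(w_\beta)_{B_x}}\ \le\ \Lambda^{|\beta-\beta_0|}-1
\]
for a universal constant $\Lambda>1$. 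Choosing $\eps\le\log(1+\delta_p)/\log\Lambda$, with $\delta_p=\delta_p(p,C_\mu,C_p)$ the threshold from Theorem~\ref{thm:delta reg improvement}, makes $w_\beta$ $\delta$-regularizable with $\delta\le\delta_p$. Theorem~\ref{thm:delta reg improvement} then delivers a $p$-Hardy inequality with respect to $w_\beta\,d\tilde\mu=d_\Omega^\beta\,d\mu$ whose constant depends only on $p$, $C_\mu$, and the $p$-admissibility constants of $w_\beta$ --- all controlled by $p,\beta_0,C_\mu,\lambda,C_{PI}$ uniformly in $\beta$ --- which is exactly the conclusion of Theorem~\ref{thm:Lehrback}.

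I expect the main obstacle to be the uniform-in-$\beta$ control of the structural constants, rather than any single estimate: one needs the (known, but not entirely trivial) stability fact that the exponents $\beta$ with $d_\Omega^\beta\,d\mu$ doubling form an open set on which the doubling and Whitney-scale Poincar\'e constants vary controllably, together with careful bookkeeping so that the final Hardy constant is genuinely independent of $\beta$. The regularizability estimate itself is elementary once one uses that $d_\Omega$ is comparable to a constant on each Whitney ball. (Alternatively, one could argue directly along Lehrb\"ack's original lines: split the weighted Hardy inequality via the Whitney decomposition of Lemma~\ref{lem:Whitney Decomp} into Whitney-scale estimates handled by the Poincar\'e inequality and a discrete large-scale Hardy inequality along Whitney chains, and observe that the latter is stable under the factor-$\Lambda^{|\beta-\beta_0|}$ perturbation of the weights --- which is precisely the mechanism that Theorem~\ref{thm:delta reg improvement} abstracts.)
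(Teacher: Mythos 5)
First, a point of comparison: the paper does not actually prove this statement --- it is quoted from Lehrb\"ack \cite{L} --- and the only in-paper justification is Remark~\ref{rem:Juhas weights}, which sketches how Theorem~\ref{thm:delta reg improvement} recovers it (via $\delta$-regularizability of $d_\Omega^\beta$ and $q$-admissibility at Whitney scales). Your proposal follows that same route, but it contains a genuine gap in the step that is supposed to handle general $\beta_0$: the change of base measure. You set $\tilde\mu:=d_\Omega^{\beta_0}\,d\mu$ and assert it is doubling because ``the set of exponents $\beta$ for which $d_\Omega^\beta\,d\mu$ is doubling is open and contains $\beta_0$,'' its nonemptiness being ``part of what makes the hypothesis meaningful.'' Neither claim is justified. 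The hypothesis of a $d_\Omega^{\beta_0}$-weighted $p$-Hardy inequality carries no doubling information: both sides involve only $u\in\Lip_c(\Omega)$ and $g_u$, which vanish outside a compact subset of $\Omega$ on which $d_\Omega^{\beta_0}$ is bounded above and below, so the hypothesis is meaningful for every $\beta_0\in\R$ regardless of any global properties of the weight. And doubling of $d_\Omega^{\beta_0}\,d\mu$ on $X$ is generically false: for a bounded $\Omega\subset\R^n$ and $\beta_0>0$ the weight vanishes identically on $X\setminus\overline\Omega$, so balls there have zero mass; for $\beta_0<0$ it degenerates the other way, and even on $\overline\Omega$ doubling is a nontrivial geometric condition on $\partial\Omega$ --- the paper itself warns in Remark~\ref{rem:Juhas weights}, citing \cite{BS}, that $d_\Omega^\beta$ need not be $p$-admissible without extra assumptions on $\Omega$. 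Since Theorem~\ref{thm:delta reg improvement} requires a complete \emph{doubling} base measure (its proof uses the Whitney decomposition, the weighted maximal function, and Whitney-scale Poincar\'e inequalities relative to that base), and Definition~\ref{def:p-admissibile Whitney}(i) likewise demands global doubling of $w\,d\mu$, your reduction is not licensed by anything in the paper. As written, your argument only yields the case $\beta_0=0$, which is precisely the content of Remark~\ref{rem:Juhas weights}; reaching general $\beta_0$ would require reworking the proof of Theorem~\ref{thm:delta reg improvement} for a base measure that is itself only admissible at Whitney scales --- a genuine additional step, and essentially what Lehrb\"ack's original argument in \cite{L} carries out.

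Two smaller defects in the admissibility check: the ``constant factor cancels'' transfer of the $(1,p)$-Poincar\'e inequality is valid only while the dilated ball $\lambda B(x,r)$ stays where $d_\Omega\simeq d_\Omega(x)$; for $r$ up to $\tfrac34 d_\Omega(x)$ and $\lambda>4/3$ (the non-geodesic case) the dilated ball can reach $\partial\Omega$, where the weight degenerates, so this step needs a more careful argument. Moreover, in the non-geodesic case Remark~\ref{rem:Juhas weights} asks for $q$-admissibility at Whitney scales for some $q<p$ --- the exponent gap is what drives the maximal-function step in the proof of Theorem~\ref{thm:delta reg improvement} --- whereas you verify only the exponent $p$; the gap is obtainable by first applying Keith--Zhong \cite{KeZ} to $\mu$, but you should say so.
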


\noindent In Section~\ref{sec:Reg Weights}, we generalize this result to apply to a larger class of weights, see Theorem~\ref{thm:delta reg improvement}.

The main object of our study is the following fractional Hardy inequality:

\begin{defn}\label{def:Frac Hardy}
    Let $\Omega\subset X$ be open, let $1<p<\infty$, and let $0<\theta<1$.  We say that $\Omega$ satisfies a \emph{$(\theta,p)$-Hardy inequality} if there exists $C_{\theta,p}\ge 1$ such that the following holds for for all $u\in\Lip_c(\Omega)$:
    \begin{equation*}
        \int_{\Omega}\frac{|u(x)|^p}{d(x,X\setminus\Omega)^{\theta p}}d\mu(x)\le C_{\theta,p}\int_X\int_X\frac{|u(x)-u(y)|^p}{d(x,y)^{\theta p}\mu(B(x,d(x,y)))}d\mu(y)d\mu(x).
    \end{equation*}
\end{defn}
We note that the right-hand side of the $(\theta,p)$-Hardy inequality is given by the Besov energy of $u$.  As this energy is nonlocal, it receives some contribution from the region where $u$ vanishes. That is, the regions $X\setminus\Omega$ contribute to the energy of the right-hand side.  This is in contrast to the $p$-Hardy inequality, for which the energy on the right-hand side is local: since $u$ vanishes on $X\setminus\Omega$, it follows that $g_u=0$ there.  For sufficient conditions allowing for integration over $\Omega$ in the right-hand side of above inequality, see \cite{ILTV}.

\section{Regularizable weights and weighted $p$-Hardy inequalities}\label{sec:Reg Weights}

In this section, we consider a metric measure space $(X,d,\mu)$, with $\mu$ a doubling measure, and an open set $\Omega\subset X$.  Throughout this section, we use the notation 
\begin{equation*}
    d_\Omega(\cdot):=d(\cdot,X\setminus\Omega).
\end{equation*}
We show that if $\Omega$ satisfies a $p$-Hardy inequality with respect to $\mu$, then it also satisfies a $p$-Hardy inequality with respect to the weighted measure $w\,d\mu$ whenever $w$ is a $p$-admissible weight which is \emph{$\delta$-regularizable}, defined below, for sufficiently small $\delta>0$.  Our result, Theorem~\ref{thm:delta reg improvement}, generalizes the self-improvement result Theorem~\ref{thm:Lehrback} obtained in \cite{L,L2} by Lehrb\"ack for weights of the form $d_\Omega^\beta$, see Remark~\ref{rem:Juhas weights}.  We will use Theorem~\ref{thm:delta reg improvement} in Section~\ref{sec:Self-Improvement} to prove self-improvement of the fractional Hardy inequality.

Recall that we define a weight to be a non-negative function $w\in L^1_{\rm loc}(X,\mu)$. We associate to such $w$ a weighted measure $d\mu_w=wd\mu$, and call $w$ doubling if $\mu_w$ is doubling. To simplify notation, we use $w$ in place of $\mu_w$ and write $w(A)=\mu_w(A)$ for the weighted measure of a measurable set $A$.  We now define $\delta$-regularizable weights, roughly speaking, as those whose average values do not change much at Whitney scales:

\begin{defn}\label{def:reg weights}
    Let $\delta>0$.  We say that a doubling weight is \emph{$\delta$-regularizable} (in $\Omega$) if whenever $x,x'\in\Omega$ and $B_x:=B(x,d_\Omega(x)/8)$ and $B_{x'}:=B(x',d_\Omega(x')/8)$ are such that $2B_{x'}\cap B_x\ne\varnothing$, we have
    \begin{equation}\label{eq:delta reg condition}
        |w_{B_x}-w_{B_{x'}}|\le\delta w_{B_x}.
    \end{equation}
    Here we again use the notation
    \[
    w_B:=\fint_B w\,d\mu=\frac{w(B)}{\mu(B)}.
    \]
\end{defn}

Let $\W_\Omega=\{B_i\}_i$ and $\{\phii_i\}_{i}$ be the Whitney decomposition of $\Omega\subset X$ and associated Lipschitz partition of unity, respectively, given by Lemma~\ref{lem:Whitney Decomp}. We define the discrete convolution $\wtil w$ of a weight $w$ by 
\begin{equation}\label{eq:weight convolution}
\wtil w(x)=\sum_iw_{B_i}\phii_i(x).
\end{equation}
The key property of $\delta$-regularizable weights is the following estimate for the upper gradient of the discrete convolution of a $\delta$-regularizable weight: 

\begin{lem}\label{lem:regularized gradient}
Let $\Omega\subset X$ be an open set. 
    There exists a constant $C_0\ge 1$, depending only on $C_\mu$, such that if $w$ is a $\delta$-regularizable weight in $\Omega$, then 
    \[
    g_{\wtil w}\le C_0\delta d_\Omega^{-1}\wtil w
    \]
    a.e. in $\Omega$.
\end{lem}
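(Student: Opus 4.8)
The plan is to estimate the minimal upper gradient of the discrete convolution $\wtil w = \sum_i w_{B_i}\phii_i$ pointwise on each Whitney ball, exploiting the bounded overlap of the $\{B_i\}$, the Lipschitz bounds on the $\phii_i$, and the $\delta$-regularizability condition to control the differences $|w_{B_i} - w_{B_j}|$ between neighboring Whitney coefficients. Fix $x \in \Omega$ and let $I_x = \{i : x \in 2B_i\}$ be the (uniformly bounded) set of indices of Whitney balls whose doubled versions contain $x$; since $\operatorname{supp}\phii_i \subset 2B_i$, only these indices contribute near $x$. Using $\sum_i \phii_i \equiv 1$ on $\Omega$, I would write, for any fixed $i_0 \in I_x$,
\[
\wtil w(y) = w_{B_{i_0}} + \sum_{i} (w_{B_i} - w_{B_{i_0}})\phii_i(y)
\]
for $y$ near $x$, so that a candidate upper gradient of $\wtil w$ near $x$ is $\sum_i |w_{B_i} - w_{B_{i_0}}|\, g_{\phii_i}$, and $g_{\phii_i} \le C/r_i$ a.e. by Lemma~\ref{lem:Whitney Decomp}.

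The next step is to control $|w_{B_i} - w_{B_{i_0}}|$ for $i, i_0 \in I_x$. Standard properties of the Whitney decomposition give that neighboring Whitney balls have comparable radii, $r_i \simeq r_{i_0} \simeq d_\Omega(x)$, and that one can pass from $B_{i_0}$ to $B_i$ through a uniformly bounded chain of Whitney balls, each consecutive pair of which overlaps appropriately so that the $\delta$-regularizability hypothesis \eqref{eq:delta reg condition} applies. Iterating \eqref{eq:delta reg condition} along such a chain of bounded length $N$ (with $N$ depending only on $C_\mu$), and using $\delta$ small if necessary to absorb geometric-series factors, yields $|w_{B_i} - w_{B_{i_0}}| \le C\delta\, w_{B_{i_0}}$ with $C$ depending only on $C_\mu$. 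Here I should check that the overlap condition "$2B_{x'} \cap B_x \ne \varnothing$" in Definition~\ref{def:reg weights} is indeed met by consecutive balls in the chain; this follows from the standard fact that if $2B_i \cap 2B_j \ne \varnothing$ then these balls are "Whitney neighbors" with comparable radii, and one can interpolate.

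Combining these, near $x$ the quantity $\sum_i |w_{B_i} - w_{B_{i_0}}|\, g_{\phii_i}$ is bounded a.e. by
\[
\sum_{i \in I_x} C\delta\, w_{B_{i_0}} \cdot \frac{C}{r_i} \le C\delta\, w_{B_{i_0}} \cdot \frac{C}{d_\Omega(x)} \cdot \#I_x \le C_0 \delta\, \frac{w_{B_{i_0}}}{d_\Omega(x)},
\]
since $\#I_x$ and the ratios $r_i/d_\Omega(x)$ are controlled by $C_\mu$ alone. Finally I would note $w_{B_{i_0}} \le (1 + C\delta)\wtil w(x) \le 2\wtil w(x)$ (again comparing $w_{B_{i_0}}$ to the convex combination $\wtil w(x) = \sum_i w_{B_i}\phii_i(x)$ via the same chain estimate, shrinking $\delta_p$ if needed), which upgrades the bound to $C_0 \delta\, d_\Omega(x)^{-1} \wtil w(x)$ as claimed. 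The main obstacle is the bookkeeping in the chaining argument: one must verify that consecutive Whitney balls genuinely satisfy the overlap hypothesis of $\delta$-regularizability with uniformly bounded chain length, and that all comparability constants ($r_i \simeq d_\Omega(x)$, $\#I_x \le C$, and the passage between $w_{B_{i_0}}$ and $\wtil w(x)$) depend only on $C_\mu$; everything else is a routine assembly of Lemma~\ref{lem:Whitney Decomp} and Definition~\ref{def:reg weights}.
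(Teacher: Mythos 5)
Your proposal follows the same skeleton as the paper's proof---expand $\wtil w$ around a base Whitney coefficient via the partition of unity, bound the gradient by $\sum_j |w_{B_j}-w_{B_{i_0}}|\,g_{\phii_j}$, and use bounded overlap, $g_{\phii_j}\lesssim r_j^{-1}$, $r_j\simeq d_\Omega(x)$, and regularizability---but you insert a chaining step that the paper avoids, and this costs you something. Since the Whitney balls $B_i$ themselves cover $\Omega$ (Lemma~\ref{lem:Whitney Decomp}), you may take the base index $i_0$ with $x\in B_{i_0}$, not merely $x\in 2B_{i_0}$; then every $j$ whose $\phii_j$ is active near $x$ satisfies $2B_j\cap B_{i_0}\ne\varnothing$, so Definition~\ref{def:reg weights} gives $|w_{B_j}-w_{B_{i_0}}|\le\delta w_{B_{i_0}}$ in one step, with no chain and no accumulation of $(1+\delta)$-factors; likewise $w_{B_{i_0}}\le C\wtil w(x)$ follows at once from $\phii_{i_0}\ge C^{-1}$ on $B_{i_0}$ and $w\ge 0$, rather than from your convex-combination/chain comparison. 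This matters beyond aesthetics: your chain estimate produces factors of the form $\delta N(1+\delta)^{N-1}$, and your final step invokes ``$\delta$ small if necessary'' and ``shrinking $\delta_p$ if needed,'' but the lemma makes no smallness assumption on $\delta$ and its constant $C_0$ must depend only on $C_\mu$, precisely because $\delta_p$ in Theorem~\ref{thm:delta reg improvement} is afterwards defined in terms of $C_0$---so shrinking $\delta_p$ at this stage is circular as stated. Your chaining can be made rigorous (e.g.\ interpolate through the ball $B(z,d_\Omega(z)/8)$ for a point $z$ in the overlap; note that Definition~\ref{def:reg weights} applies to balls of this form centered at arbitrary points of $\Omega$, not only at Whitney centers), so the argument is salvageable, but as written it yields the asserted bound only for $\delta$ bounded above, whereas the paper's direct choice of base ball gives the clean statement, with $C_0=C_0(C_\mu)$, for every $\delta>0$.
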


\begin{proof}
    Suppose that $x\in B_i$.  Then by the partition of unity, it follows that 
    \[
    \wtil w(x)=w_{B_i}+\sum_{j}\left(w_{B_j}-w_{B_i}\right)\phii_j(x)=w_{B_i}+\sum_{j:\,2B_j\cap B_i\ne\varnothing}\left(w_{B_j}-w_{B_i}\right)\phii_j(x).
    \]
    Since $w$ is $\delta$-regularizable, and by bounded overlap of $\W_\Omega$ and the properties of the Lipschitz partition of unity, we then have that 
    \[
    g_{\wtil w}(x)\lesssim\sum_{j:\,2B_j\cap B_i\ne\varnothing}\left|w_{B_j}-w_{B_i}\right|r_j^{-1}\lesssim \delta w_{B_i} d_\Omega(x)^{-1}\lesssim\delta d_\Omega(x)^{-1}\wtil w(x),
    \]
    with constant depending only on the doubling constant of $\mu$.  
\end{proof}

We now prove Theorem~\ref{thm:delta reg improvement}, the main result of this section. It establishes a weighted self-improvement for the $p$-Hardy inequalities in terms of $\delta$-regularizable weights which are $p$-admissible.  

\begin{proof}[Proof of Theorem~\ref{thm:delta reg improvement}]
    Let $w$ be $p$-admissible and $\delta$-regularizable in $\Omega$ for some $0<\delta\le\delta_p$, with $\delta_p$ to be chosen later.  Since $(X,d)$ is complete, and since $w\,d\mu$ is doubling and supports a $p$-Poincar\'e inequality, it follows from \cite{KeZ} that there exists $1\le q<p$ such that $w\,d\mu$ supports a $(1,q)$-Poincar\'e inequality,  with constants depending only on $p$-admissibility constants of $w$.  By choosing $q<p$ sufficiently close to $p$ such that $p/\max\{\log_2C_\mu,2p\}\ge p-q$, it follows from \cite[Theorem~4.21]{BB} for example, that $w\,d\mu$ also supports a $(q,p)$-Poincar\'e inequality with constants depending only on the $p$-admissibility constants of $w$.  Moreover, letting $\lambda\ge 1$ be the dilation constant of this $(1,q)$-Poincar\'e inequality, it follows that the $(p,q)$-Poincar\'e inequality has dilation constant $2\lambda$.      
    
    Let $\W_\Omega=\{B_i\}_i$ and $\{\phii_i\}_{i}$ be the Whitney cover of $\Omega$ and Lipschitz partition of unity given by Lemma~\ref{lem:Whitney Decomp}.  For each $i\in\N$, let 
    \[
    u_{B_i}:=\frac{1}{w(B_i)}\int_{B_i}uwd\mu,
    \]
    and define the discrete convolution $\wtil u$ of $u$ by 
    \[
    \wtil u=\sum_iu_{B_i}\phii_i.
    \]
    We then have that 
    \begin{align}\label{eq:3/28-1}
        \int_{\Omega}\frac{|u(x)|^p}{d_\Omega(x)^p}w(x)d\mu(x)\le 2^p\sum_{i}\int_{B_i}\frac{|u(x)-\wtil u(x)|^p}{d_\Omega(x)^p}w(x)d\mu(x)+2^p\sum_{i}\int_{B_i}\frac{|\wtil u(x)|^p}{d_\Omega(x)^p}w(x)d\mu(x).
    \end{align}
    
    To estimate the first term of the right-hand side of \eqref{eq:3/28-1}, we note that for $x\in B_i$, we have 
    \[
    u(x)-\wtil u(x)=u(x)-u_{B_i}+\sum_j(u_{B_j}-u_{B_i})\phii_j(x).
    \]
    Thus, it follows that 
    \begin{align*}
        |u(x)-\wtil u(x)|\le|u(x)-u_{B_i}|+\sum_{j:\,2B_j\cap B_i\ne\varnothing}|u_{B_j}-u_{B_i}|.
    \end{align*}
     We note that if $2B_j\cap B_i\ne\varnothing$, then $B_j\subset 6B_i$ and $B_i\subset 5B_j$. Since $w\,d\mu$ is doubling and supports a $(1,q)$-Poincar\'e inequality, it follows that 
    \begin{align}\label{eq:3/28-4}
    |u_{B_j}-u_{B_i}|\lesssim\frac{1}{w(6 B_i)}\int_{6B_i}|u-u_{6B_i}|wd\mu\lesssim r_i\left(\frac{1}{w(6\lambda B_i)}\int_{6\lambda B_i}g_u^qwd\mu\right)^{1/q}.
    \end{align}
    Hence, we have that 
    \[
    |u(x)-\wtil u(x)|\lesssim |u(x)-u_{B_i}|+r_i\left(\frac{1}{w(6\lambda B_i)}\int_{6 \lambda B_i}g_u^qw\,d\mu\right)^{1/q},
    \]
    where we have used bounded overlap of $\W_\Omega$. The comparison constant depends only on the doubling constant of $\mu$ and the $p$-admissibility constants of $w$. 
    Furthermore, as $w\,d\mu$ supports a $(p,q)$-Poincar\'e inequality,  and since $d_\Omega(x)\simeq r_i$ for each $x\in B_i$, we then obtain 
    \begin{align*}
        \int_{B_i}\frac{|u(x)-\wtil u(x)|^p}{d_\Omega(x)^p}&w(x)d\mu(x)\\
        &\lesssim \int_{B_i}\frac{|u(x)-u_{B_i}|^p}{d_\Omega(x)^p}w(x)d\mu(x)+r_i^p\left(\frac{1}{w(6\lambda B_i)}\int_{6\lambda B_i}g_u^qwd\mu\right)^{p/q}\int_{B_i}\frac{w(x)}{d_\Omega(x)^p}d\mu(x)\\
        &\lesssim w(B_i)\left(\frac{1}{w(6\lambda B_i)}\int_{6\lambda B_i}g_u^qwd\mu\right)^{p/q}\\
        &\le\int_{B_i}(M^w g_u^q)^{p/q}wd\mu.
    \end{align*}
Here $M^w$ is the (uncentered) Hardy-Littlewood maximal function, defined with respect to the measure $w\,d\mu$. As $w\,d\mu$ is doubling, $M^w$ is bounded from $L^{p/q}(X,wd\mu)$ to $L^{p/q}(X,wd\mu)$.  Using this fact and the bounded overlap of $\W_\Omega$, see Lemma~\ref{lem:Whitney Decomp}, we estimate the first term on the right-hand side of \eqref{eq:3/28-1} as follows:
\begin{align}\label{eq:3/28-2}
    \sum_{i}\int_{B_i}\frac{|u(x)-\wtil u(x)|^p}{d_\Omega(x)^p}w(x)d\mu(x)&\lesssim\sum_i \int_{B_i}(M^w g_u^q)^{p/q}wd\mu\nonumber\\
    &\lesssim\int_{\Omega}(M^w g_u^q)^{p/q}wd\mu\lesssim\int_\Omega g_u^pwd\mu.
\end{align}
Here the comparison constant depends only on $p$, $C_\mu$, and the $p$-admissibility constants of $w$. 

To estimate the second term on the right-hand side of \eqref{eq:3/28-1}, we see that 
\begin{align}\label{eq:3/28-3}
    \int_{B_i}\frac{|\wtil u(x)|^p}{d_\Omega(x)^p}w(x)d\mu(x)\lesssim\int_{B_i}\frac{|\wtil u(x)-u_{B_i}|^p}{d_\Omega(x)^p}w(x)d\mu(x)+\int_{B_i}\frac{|u_{B_i}|^p}{d_\Omega(x)^p}w(x)d\mu(x).
\end{align}
By using \eqref{eq:3/28-4}, and a similar argument to above, we estimate the first term on the right-hand side of the previous expression by 
\begin{align}\label{eq:3/28-5}
    \int_{B_i}\frac{|\wtil u(x)-u_{B_i}|^p}{d_\Omega(x)^p}w(x)d\mu(x)\lesssim\frac{1}{r_i^p}\sum_{j:\,2B_j\cap B_i\ne\varnothing}\int_{B_i}|u_{B_j}-u_{B_i}|^pwd\mu\lesssim\int_{B_i}(M^w g_u^q)^{p/q}wd\mu.
\end{align}

We note that $w(B_i)\simeq\wtil w(B_i)$, with comparison constant depending only on the doubling constant of $\mu$ coming from the Lipschitz partition of unity. Since $d_\Omega\simeq r_i$ on $B_i$, we then estimate the second term on the right-hand side of \eqref{eq:3/28-3} as follows:
\begin{align*}
    \int_{B_i}\frac{|u_{B_i}|^p}{d_\Omega(x)^p}w(x)d\mu(x)&\lesssim\int_{B_i}\frac{|u_{B_i}|^p}{d_\Omega(x)^p}\wtil w(x)d\mu(x)\\
    &\lesssim\int_{B_i}\frac{|\wtil u(x)-u_{B_i}|^p}{d_\Omega(x)^p}\wtil w(x)d\mu(x)+\int_{B_i}\frac{|\wtil u(x)|^p}{d_\Omega(x)^p}\wtil w(x)d\mu(x).
\end{align*}
The first term on the right-hand side of the above expression can be estimated using \eqref{eq:3/28-4} in a manner similar to \eqref{eq:3/28-5}, and so we then obtain
\[
\int_{B_i}\frac{|u_{B_i}|^p}{d_\Omega(x)^p}w(x)d\mu(x)\lesssim\int_{B_i}(M^w g_u^q)^{p/q}wd\mu+\int_{B_i}\frac{|\wtil u(x)|^p}{d_\Omega(x)^p}\wtil w(x)d\mu(x).
\] 
 Combining this inequality with \eqref{eq:3/28-3} and \eqref{eq:3/28-5}, we then have
\[
\int_{B_i}\frac{|\wtil u(x)|^p}{d_\Omega(x)^p}w(x)d\mu(x)\lesssim \int_{B_i}(M^w g_u^q)^{p/q}wd\mu+\int_{B_i}\frac{|\wtil u(x)|^p}{d_\Omega(x)^p}\wtil w(x)d\mu(x).
\]
Thus, by bounded overlap of $\W_\Omega$ and boundedness of $M^w$ from $L^{p/q}(X,wd\mu)$ to $L^{p/q}(X,wd\mu)$, we estimate the second term on the right-hand side of \eqref{eq:3/28-1} by 
\begin{align*}
\sum_{i}\int_{B_i}\frac{|\wtil u(x)|^p}{d_\Omega(x)^p}w(x)d\mu(x)\lesssim \int_{\Omega}g_u^pwd\mu+\int_{\Omega}\frac{|\wtil u(x)|^p}{d_\Omega(x)^p}\wtil w(x)d\mu(x).
\end{align*}
Combining this estimate with \eqref{eq:3/28-2} and \eqref{eq:3/28-1}, we then obtain
\begin{align}\label{eq:3/28-6}
   \int_{\Omega}\frac{|u(x)|^p}{d_\Omega(x)^p}w(x)d\mu(x)\lesssim \int_{\Omega}g_u^pwd\mu+\int_{\Omega}\frac{|\wtil u(x)|^p}{d_\Omega(x)^p}\wtil w(x)d\mu(x),
\end{align}
with comparison constants depending only on $p$, $C_\mu$, and the $p$-admissibility constants of $w$. 

Define the function $v:=|\wtil u|\wtil w^{1/p}$.  By the Leibniz rule and chain rule for upper gradients, see for example \cite[Theorem~2.15,\,Theorem~2.16]{BB}, and Lemma~\ref{lem:regularized gradient}, we then have that 
\[
g_v\le g_{\wtil u}\wtil w^{1/p}+\frac{1}{p}g_{\wtil w}\wtil w^{1/p-1}|\wtil u|\le g_{\wtil u}\wtil w^{1/p}+\frac{C_0}{p}\delta d_\Omega^{-1}\wtil w^{1/p}|\wtil u|
\]
where $C_0\ge 1$ is the constant from Lemma~\ref{lem:regularized gradient} which depends only on $C_\mu$. Applying the $p$-Hardy inequality to the function $v$ gives us 
\begin{align*}
    \int_{\Omega}\frac{|\wtil u(x)|^p}{d_\Omega(x)^p}\wtil w(x)d\mu(x)&=\int_{\Omega}\frac{|v(x)|^p}{d_\Omega(x)^p}d\mu(x)\\
    &\le C_p\int_\Omega g_v^p\,d\mu\le 2^pC_{p}\int_{\Omega}g_{\wtil u}^p\wtil wd\mu+2^pC_{p}\left(\frac{C_0\delta}{p}\right)^p\int_{\Omega}\frac{|\wtil u(x)|^p}{d_\Omega(x)^p}\wtil w(x)d\mu(x),
\end{align*}
where $C_p\ge 1$ is the constant from the $p$-Hardy inequality. Therefore, if $w$ is $\delta$-regularizable for $0<\delta\le\delta_p:=\frac{p}{2C_0}\left(2C_p\right)^{-1/p}$, we have that
\begin{equation*}
     \int_{\Omega}\frac{|\wtil u(x)|^p}{d_\Omega(x)^p}\wtil w(x)d\mu(x)\le 2^{p+1}C_p\int_{\Omega}g_{\wtil u}^p\wtil wd\mu,
\end{equation*}
and so from \eqref{eq:3/28-6} we obtain 
\begin{align}\label{eq:3/28-7}
    \int_{\Omega}\frac{|u(x)|^p}{d_\Omega(x)^p}w(x)d\mu(x)\lesssim\int_\Omega g_u^pwd\mu+\int_{\Omega}g_{\wtil u}^p\wtil wd\mu,
\end{align}
with comparison constant depending only on $p$, $C_\mu$, and the $p$-admissibility constants of $w$. 

Finally, for $x\in B_i$, we have that 
\[
\wtil u(x)=u_{B_i}+\sum_{j:\,2B_j\cap B_i\ne\varnothing}(u_{B_j}-u_{B_i})\phii_j(x),
\]
and so by  \eqref{eq:3/28-4}, we have
\[
g_{\wtil u}(x)\lesssim\frac{1}{r_i}\sum_{j:\,2B_j\cap B_i\ne\varnothing}|u_{B_j}-u_{B_i}|\lesssim\left(\frac{1}{w(6\lambda B_i)}\int_{6\lambda B_i}g_u^qwd\mu\right)^{1/q},
\]
where we have used the bounded overlap of $\W_\Omega$. 
Using the fact that $\wtil w(B_i)\simeq w(B_i)$, along with bounded overlap of $\W_\Omega$ and boundedness of $M^w$ on $L^{p/q}(X,wd\mu)$,  we then obtain
\begin{align*}
    \int_{\Omega}g_{\wtil u}^p\wtil wd\mu\le\sum_{i}\int_{B_i}g_{\wtil u}^p\wtil wd\mu&\lesssim\sum_{i}\wtil w(B_i)\left(\frac{1}{w(6\lambda B_i)}\int_{6\lambda B_i}g_u^qwd\mu\right)^{p/q}\\
    &\lesssim\sum_i\int_{B_i}(M^w g_u^q)^{p/q}wd\mu\lesssim\int_\Omega (M^w g_u^q)^{p/q}wd\mu\lesssim\int_\Omega g_u^pwd\mu.
\end{align*}
Here the constant depends only on $p$, $C_\mu$, and the $p$-admissibility constants of $w$.  Combining this estimate with \eqref{eq:3/28-7} completes the proof.
\end{proof}

\begin{remark}\label{rem:Bojarski}
   If in addition $(X,d)$ is assumed to be geodesic, then the above theorem can be proven in a slightly simpler manner.  If $w$ is $p$-admissible in this case, we can take $\lambda=1$ to be the dilation constant of the $(1,p)$-Poincar\'e inequality supported by the measure $w\,d\mu$, as shown in \cite{HaKo}. In this case, we do need not be concerned about the possible lack of bounded overlap of the collection $\{6\lambda B_i\}_i$, and so we can use the $(1,p)$-Poincar\'e inequality directly, rather than calling upon the deep Poincar\'e inequality self-improvement result of Keith and Zhong \cite{KeZ}.  To prove Theorem~\ref{thm:FracHardyImprovement} in Section~\ref{sec:Self-Improvement}, we will apply Theorem~\ref{thm:delta reg improvement} in the setting of a uniformized hyperbolic filling $(\oXeps,d_\eps,\mu_\beta)$ of a compact doubling metric measure space $(Z,d,\nu)$.  The space $(\oXeps,d_\eps,\mu_\beta)$ will be constructed such that it geodesic, see Section~\ref{sec:HypFill}.

\end{remark}

\begin{remark}\label{rem:Juhas weights}
    We see that for all $\delta>0$, there exists $\beta_0>0$ such that if $\beta\in\R$ with $|\beta|<\beta_0$, then $w:=d_\Omega^\beta$ is $\delta$-regularizable in $\Omega$.  Indeed, if $x,x'\in\Omega$ are such that $B(x',d_\Omega(x')/4)\cap B(x,d_\Omega(x)/8)\ne\varnothing$, then for all $y\in B(x,d_\Omega(x)/8))=:B_x$ and $z\in B(x',d_\Omega(x')/8))=:B_{x'}$, we have that $d_\Omega(y)\simeq d_\Omega(z)$. Thus, it follows that 
    \begin{align*}
    \left|w_{B_x}-w_{B_{x'}}
   \right|&\le\fint_{B_x}\fint_{B_{x'}}|d_\Omega(y)^\beta-d_\Omega(z)^\beta|d\mu(z)d\mu(y)\\
    &\le|1-C^\beta|\fint_{B_x}d_\Omega(y)^\beta d\mu(y)=|1-C^\beta|w_{B_x}
    \end{align*}
    for some $C>0$. Therefore, if $|\beta|$ is sufficiently small, then \eqref{eq:delta reg condition} is satisfied. 

    An examination of the proof of Theorem~\ref{thm:delta reg improvement} shows that the same conclusion holds if the $p$-admissibility assumption on $w$ is weakened to $q$-admissibility at Whitney scales in $\Omega$ for some $1<q<p$, see Definition~\ref{def:p-admissibile Whitney}. In the case that $X$ is geodesic, $w$ can be assumed to be $p$-admissible at Whitney scales in $\Omega$, by the same explanation as in Remark~\ref{rem:Bojarski}.  While weights of the form $d_\Omega^\beta$ may not be $p$-admissible in general without additional assumptions on $\Omega$, see \cite{BS} for example, we note that if $(X,d,\mu)$ supports a $(1,p)$-Poincar\'e inequality, then such weights are necessarily $q$-admissible at Whitney scales in $\Omega$ for some $1<q<p$.    From this, we see that Theorem~\ref{thm:delta reg improvement} recovers the self-improvement result \cite[Theorem~1]{L}.  As we show below in Proposition~\ref{prop:DistanceToPorousSets}, however, $\delta$-regularizable weights form a larger class, including powers of distance weights to porous sets, for example.  
\end{remark}

\begin{defn}\label{def:porous}
    Let $0<c\le 1$.  We say that a set $E\subset X$ is \emph{$c$-porous} if for all $x\in X$ and $0<r<\diam(E)$, there exists $y\in B(x,r)$ satisfying $B(y,cr)\subset X\setminus E$. 
\end{defn} 

The measure of the $\rho$-neighborhood of a porous set decays in a controlled manner, as shown by the following lemma: 

\begin{lem}{\cite[Theorem~2.8]{BS}}\label{lem:PorosityLayers}
If $E\subset X$ is $c$-porous, then there exist $\kappa>0$ and $C\ge 1$, depending only on $c$ and $C_\mu$, such that for all $x\in X$ and $0<\rho\le r<\diam(E)$, we have
\[
\mu\left(\{y\in B(x,r):d(y,E)<\rho\}\right)\le C\left(\frac{\rho}{r}\right)^\kappa\mu(B(x,r)).
\]    
\end{lem}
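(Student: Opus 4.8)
The plan is to run a Cantor–type elimination at dyadic scales: use $c$-porosity to discard a fixed proportion of the measure at each stage, obtaining geometric decay in the number of stages, and then convert this into the power bound $(\rho/r)^\kappa$. First I would fix a dyadic cube decomposition $\{\mathcal D_n\}_{n\in\Z}$ of $(X,d,\mu)$ in the sense of Christ and Hyt\"onen--Kairema: for each $n$, the collection $\mathcal D_n$ partitions $X$ up to a $\mu$-null set, the cubes of $\mathcal D_{n+1}$ refine those of $\mathcal D_n$, and every $Q\in\mathcal D_n$ satisfies $B(z_Q,c_0\delta_0^{\,n})\subseteq Q\subseteq B(z_Q,C_0\delta_0^{\,n})$ for a fixed $\delta_0\in(0,1)$ and constants $c_0,C_0$ depending only on $C_\mu$. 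Call $Q\in\mathcal D_n$ \emph{good} if $\dist(Q,E)\ge c_1\delta_0^{\,n}$ and \emph{bad} otherwise, where $c_1>0$ is a small constant to be fixed by the next step.

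The first key point is a ``one-step'' lemma for cubes: there are an integer $m=m(c,C_\mu,\delta_0)$ and $\delta=\delta(c,C_\mu,\delta_0)\in(0,1)$ such that every bad $Q\in\mathcal D_n$ with $\delta_0^{\,n}<\diam E$ has a good descendant $Q'\in\mathcal D_{n+m}$ with $\mu(Q')\ge\delta\,\mu(Q)$. To see this, apply $c$-porosity to the ball $B(z_Q,c_0\delta_0^{\,n}/2)\subseteq Q$ to obtain $B(y_Q,cc_0\delta_0^{\,n}/2)\subseteq X\setminus E$; then $\dist(\cdot,E)\gtrsim\delta_0^{\,n}$ on $B(y_Q,cc_0\delta_0^{\,n}/4)$, and choosing $m$ large enough that $2C_0\delta_0^{\,m}$ is small compared to $cc_0$, the cube $Q'\in\mathcal D_{n+m}$ containing $y_Q$ lies inside this ball. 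Hence $\dist(Q',E)\gtrsim\delta_0^{\,n}\simeq\delta_0^{\,-m}\delta_0^{\,n+m}$, so $Q'$ is good for a suitable choice of $c_1$, and $\mu(Q')\gtrsim\mu(Q)$ by doubling, since $Q'$ contains a ball whose radius is a fixed multiple of that of a ball containing $Q$. Two trivial but essential observations accompany this: \emph{goodness is inherited by descendants} (because $\dist(\cdot,E)$ is monotone under inclusion while the scale decreases), so every bad cube of generation $n+m$ is contained in a bad cube of generation $n$; and cubes of a fixed generation are pairwise disjoint.

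Now I would iterate. Fix $x\in X$ and $0<\rho\le r<\diam E$ (the case $\rho\gtrsim r$ being trivial), choose $n_0$ with $\delta_0^{\,n_0}\simeq r$ and $\delta_0^{\,n_0}<\diam E$, and for each of the boundedly many $Q_0\in\mathcal D_{n_0}$ meeting $B(x,r)$ let $\mathcal B_j(Q_0)$ denote the bad cubes of $\mathcal D_{n_0+jm}$ contained in $Q_0$. Since bad cubes of generation $n_0+(j+1)m$ lie inside bad cubes $Q'\in\mathcal B_j(Q_0)$, since within each such $Q'$ the good descendant provided by the one-step lemma accounts for a proportion $\delta$ of $\mu(Q')$ so that the bad generation-$(n_0+(j+1)m)$ subcubes of $Q'$ have total measure at most $(1-\delta)\mu(Q')$, and since the cubes of $\mathcal B_j(Q_0)$ are disjoint, summing over them gives $\mu\big(\bigcup\mathcal B_{j+1}(Q_0)\big)\le(1-\delta)\,\mu\big(\bigcup\mathcal B_j(Q_0)\big)$, whence $\mu\big(\bigcup\mathcal B_j(Q_0)\big)\le(1-\delta)^j\mu(Q_0)$. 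On the other hand, if $\rho<c_1\delta_0^{\,n_0+jm}$ then any $y\in Q_0$ with $d(y,E)<\rho$ lies in a bad cube of $\mathcal D_{n_0+jm}$ contained in $Q_0$, so $\{y\in Q_0\cap B(x,r):d(y,E)<\rho\}\subseteq\bigcup\mathcal B_j(Q_0)$. Choosing $j$ to be the largest integer with $c_1\delta_0^{\,n_0+jm}>\rho$, summing over the relevant $Q_0$, and using $\mu(Q_0)\lesssim\mu(B(x,r))$, we get $\mu(\{y\in B(x,r):d(y,E)<\rho\})\lesssim(1-\delta)^j\mu(B(x,r))$; since $\delta_0^{\,jm}\simeq\rho/r$ by the choice of $j$, we have $(1-\delta)^j=(\delta_0^{\,jm})^{\kappa}\simeq(\rho/r)^{\kappa}$ with $\kappa:=\log(1-\delta)/(m\log\delta_0)>0$, which is the claimed estimate.

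The only real obstacle is keeping the geometric gain $(1-\delta)$ from being absorbed by the doubling constant: a naive iteration that covers $\{d(\cdot,E)<\delta_0^{\,j}s\}$ by balls at each scale loses a factor $\ge 1$ from the overlap of the covering, and this factor, compounded over $j$ scales, destroys the decay. Using dyadic cubes removes this difficulty at a stroke, because same-generation cubes partition the space, so the ``removed mass'' telescopes with no multiplicative loss, and the porosity input enters only through the single clean statement that each bad cube contains a good descendant of comparable measure a bounded number of generations below it. (If one insists on working with balls rather than cubes, the same scheme goes through provided the selected good balls are kept pairwise disjoint throughout the construction; this is more delicate, which is why the cube formulation is preferable.)
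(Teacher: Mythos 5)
Your proof is correct, but note that the paper itself contains no proof of this lemma: it is quoted directly from \cite[Theorem~2.8]{BS}, so your argument should be judged as a self-contained alternative rather than a reconstruction of anything in the text. As such it holds up. The one-step lemma is right (porosity applied at radius $c_0\delta_0^{\,n}/2<\diam E$ gives a ball of radius $\simeq\delta_0^{\,n}$ on which $\dist(\cdot,E)\gtrsim\delta_0^{\,n}$; taking $m$ with $2C_0\delta_0^{\,m}\le cc_0/4$ and $c_1:=cc_0/4$ makes the cube of generation $n+m$ containing $y_Q$ a good descendant, of measure $\ge\delta\mu(Q)$ by doubling), goodness is indeed inherited downward so bad cubes nest inside bad cubes, the telescoping $\mu(\bigcup\mathcal B_{j+1})\le(1-\delta)\mu(\bigcup\mathcal B_j)$ is lossless because same-generation cubes are disjoint, and the conversion $\delta_0^{\,jm}\simeq\rho/r$, $(1-\delta)^j=(\delta_0^{\,jm})^{\kappa}$ gives $\kappa$ and $C$ depending only on $c$ and $C_\mu$ (through $c_0,C_0,\delta_0,m,\delta$), exactly as the statement requires; the case $\rho\ge c_1\delta_0^{\,n_0}$, where no nonnegative $j$ exists, is correctly absorbed into $C$. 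What your route buys is precisely what you identify: the dyadic structure (Christ/Hyt\"onen--Kairema, available under geometric doubling, hence under $C_\mu$ alone) eliminates the overlap losses that make the naive ball-covering iteration delicate, at the cost of importing that machinery; the paper's choice buys brevity by outsourcing the whole statement to \cite{BS}, whose argument works directly with balls. Two small points to tidy when writing it up: since some dyadic constructions partition $X$ only up to a null set, either use an exact-partition version or phrase the one-step lemma as ``some cube of generation $n+m$ meeting a fixed fraction of $B(y_Q,cc_0\delta_0^{\,n}/4)$,'' so that ``the cube containing $y_Q$'' is well defined; and state explicitly that the bounded multiplicity of the cubes $Q_0\in\mathcal D_{n_0}$ meeting $B(x,r)$, and the bound $\mu(Q_0)\lesssim\mu(B(x,r))$, come from doubling at the comparable scales $\delta_0^{\,n_0}\simeq r$.
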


\begin{prop}\label{prop:DistanceToPorousSets}
    Let $\Omega\subset X$ be an open set and let $E\subset X$ be $c$-porous.  Then for all $0<\delta<1$, there exists $\sigma_0>0$, depending only $c$, $C_\mu$, and $\delta$, such that for all $\sigma\in \R$,  $\dist(\cdot, E)^\sigma$ is $\delta$-regularizable in $\Omega$ whenever $|\sigma|<\sigma_0$.  Furthermore, the dependence of $\sigma_0$ on $\delta$ is continuous.
\end{prop}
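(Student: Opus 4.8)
The plan is to reduce the $\delta$-regularizability of $w=\dist(\cdot,E)^\sigma$ to a quantitative estimate on the oscillation of $w$ over a pair of adjacent Whitney balls, and then to split that estimate into two regimes according to whether the Whitney scale $d_\Omega(x)$ is small or large compared to $\dist(x,E)$. Write $d_E(\cdot):=\dist(\cdot,E)$ and fix $x,x'\in\Omega$ with $B_x=B(x,d_\Omega(x)/8)$, $B_{x'}=B(x',d_\Omega(x')/8)$ and $2B_{x'}\cap B_x\neq\varnothing$; by the usual Whitney geometry (as in Remark~\ref{rem:Juhas weights}) we have $d_\Omega(x)\simeq d_\Omega(x')$ and, more to the point, any two points $y\in B_x$, $z\in B_{x'}$ satisfy $d(y,z)\lesssim d_\Omega(x)$. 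It therefore suffices to bound, for such $y,z$,
\[
\bigl|d_E(y)^\sigma-d_E(z)^\sigma\bigr|\le\varepsilon(\sigma)\,w_{B_x},
\]
with $\varepsilon(\sigma)\to 0$ continuously as $\sigma\to 0$; integrating this against the normalized product measure on $B_x\times B_{x'}$ then yields \eqref{eq:delta reg condition} with $\delta=\varepsilon(\sigma)$, so one takes $\sigma_0$ to be any value with $\varepsilon(\sigma_0)\le\delta$ (continuity of $\varepsilon$ gives the continuous dependence on $\delta$).

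The core is thus the pointwise oscillation bound. \emph{Case 1: $d(y,z)\le\tfrac12 d_E(y)$.} Here $d_E(y)\simeq d_E(z)$, and the mean value estimate $|a^\sigma-b^\sigma|\le|\sigma|\,\max\{a,b\}^{|\sigma|}\,|\log a-\log b|\cdot(\text{bounded factor})$ — more simply, $|d_E(y)^\sigma-d_E(z)^\sigma|\le|1-(d_E(z)/d_E(y))^\sigma|\,d_E(y)^\sigma\le (C^{|\sigma|}-1)\,d_E(y)^\sigma$ where $C$ is the comparison constant — shows the left side is at most $(C^{|\sigma|}-1)d_E(y)^\sigma$, and one checks $d_E(y)^\sigma\lesssim w_{B_x}$ when $\sigma\geq 0$ by monotonicity over $B_x$ together with the porosity layer bound, and when $\sigma<0$ directly since $d_E$ is comparable on $B_x$ in this regime; this gives a factor $\varepsilon_1(\sigma)\to 0$. \emph{Case 2: $d(y,z)>\tfrac12 d_E(y)$}, i.e. the Whitney scale is at least comparable to $d_E(y)$ (and hence, via the triangle inequality, $d_E(z)\lesssim d_\Omega(x)$ as well, and $d_E(y),d_E(z)\lesssim d_\Omega(x)$). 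Here neither $d_E(y)$ nor $d_E(z)$ need be comparable, but both are $\lesssim d_\Omega(x)\simeq\diam B_x$, so the quantity to control is really the size of $\sup_{B_x}d_E^\sigma$ relative to the average $w_{B_x}=\fint_{B_x}d_E^\sigma\,d\mu$. This is exactly where Lemma~\ref{lem:PorosityLayers} enters: porosity of $E$ forces $\mu(\{d_E<\rho\}\cap B_x)\lesssim(\rho/\diam B_x)^\kappa\mu(B_x)$, which lets one estimate $\fint_{B_x}d_E^\sigma\,d\mu$ from above and below by integrating over the layers $\{2^{-k-1}\diam B_x< d_E\le 2^{-k}\diam B_x\}$ and summing a geometric-type series in $k$ — the series converges, and its sum stays bounded away from $0$ and $\infty$, precisely when $|\sigma|<\kappa$. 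The resulting two-sided bound $w_{B_x}\simeq (\diam B_x)^\sigma$ with constants depending only on $c,C_\mu,\sigma$ and \emph{continuously} degenerating as $|\sigma|\to\kappa$, combined with $d_E(y)^\sigma,d_E(z)^\sigma$ being, respectively, $\le$ or $\gtrsim(\diam B_x)^\sigma$ depending on the sign of $\sigma$, shows $|d_E(y)^\sigma-d_E(z)^\sigma|\lesssim(\diam B_x)^\sigma\simeq w_{B_x}$ with a constant that one must track carefully.

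The subtle point — and the main obstacle — is that in Case 2 the naive constant in $|d_E(y)^\sigma-d_E(z)^\sigma|\lesssim w_{B_x}$ does \emph{not} tend to $0$ as $\sigma\to 0$: both sides are of order $(\diam B_x)^\sigma\to 1$. So a cruder bound is not enough; one must instead show that in Case 2 \emph{both} $d_E(y)^\sigma$ and $d_E(z)^\sigma$ are close to the \emph{same} value, namely to $w_{B_x}$, up to a factor $1+\varepsilon_2(\sigma)$ with $\varepsilon_2(\sigma)\to 0$. The right way is to note that after dividing through by $(\diam B_x)^\sigma$ we are comparing $t^\sigma$ for $t=d_E(y)/\diam B_x\in(0,C]$ and $t=d_E(z)/\diam B_x\in(0,C]$ against the normalized average $\fint_{B_x}(d_E/\diam B_x)^\sigma\,d\mu$; as $\sigma\to 0$ each of these tends to $1$, and the rate is controlled uniformly because the layer estimate from Lemma~\ref{lem:PorosityLayers} caps how much mass sits near $\{d_E=0\}$, so $\fint_{B_x}|t^\sigma-1|\,d\mu\le\fint_{B_x}(|\sigma||\log t|\,t^{-|\sigma|}\vee\dots)\,d\mu$ is dominated by $|\sigma|$ times a convergent layer sum that is bounded for $|\sigma|\le\kappa/2$, say. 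Carrying out this uniform-in-$\sigma$ bookkeeping — and packaging the two cases so that the final $\sigma_0$ depends continuously on $\delta$ — is the one genuinely careful computation; everything else is the standard Whitney/porosity machinery already set up in Lemma~\ref{lem:Whitney Decomp}, Lemma~\ref{lem:PorosityLayers}, and Remark~\ref{rem:Juhas weights}.
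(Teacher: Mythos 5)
There is a genuine gap: your reduction to a \emph{pointwise} oscillation bound fails exactly in the delicate regime the proposition must handle, namely when the Whitney ball comes close to (or meets) the porous set $E$. Nothing in the hypotheses keeps $E$ away from $\Omega$ — indeed in the application $E=Z\subset\oXeps$ and the Whitney balls of $\oXeps\setminus E'$ near the boundary contain points $y$ with $d_E(y)$ arbitrarily small compared to the Whitney radius. For $\sigma<0$, $d_E(y)^\sigma\to\infty$ at such points while $w_{B_x}=\fint_{B_x}d_E^\sigma\,d\mu$ is a fixed finite number (finite by Lemma~\ref{lem:PorosityLayers} when $|\sigma|<\kappa$), so no bound of the form $|d_E(y)^\sigma-d_E(z)^\sigma|\le\varepsilon(\sigma)\,w_{B_x}$ can hold uniformly in $y,z$; this already breaks your Case 1 (where $d_E(y)$ may be tiny even though $d_E(y)\simeq d_E(z)$) as well as Case 2. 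Likewise, for $\sigma>0$ a point with $d_E(y)=e^{-1/\sigma^2}\diam B_x$ has $d_E(y)^\sigma$ far below $w_{B_x}$, so your proposed repair in Case 2 — that \emph{both} $d_E(y)^\sigma$ and $d_E(z)^\sigma$ lie within a factor $1+\varepsilon_2(\sigma)$ of $w_{B_x}$ pointwise — is false. Your closing estimate $\fint_{B_x}|t^\sigma-1|\,d\mu\lesssim|\sigma|$ is a correct and useful \emph{integral} statement, but it does not yield the pointwise claim your strategy rests on, and as written the argument does not close.

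The fix is to never leave the level of averages: compare $w_{B_x}$ and $w_{B_{x'}}$ to a common reference value rather than comparing the integrands. This is what the paper does. In the ``far'' regime ($d(x,E)\gtrsim d_\Omega(x)$, your Case 1 done ball-wise rather than pointwise) $d_E$ is genuinely comparable on $B_x\cup B_{x'}$ and one gets $|w_{B_{x'}}/w_{B_x}-1|\le C^{|\sigma|}-1$. In the ``near'' regime one uses Lemma~\ref{lem:PorosityLayers} to sandwich each of $w_{B_x}$, $w_{B_{x'}}$ between $(\text{const}\cdot\rho)^\sigma$-type quantities with a multiplicative error $1\pm\delta/2$, choosing the cutoff $\rho=\tau r$ with $\tau=\tau(\delta,c,C_\mu)$ \emph{independent of} $\sigma$ (for $\sigma<0$ the upper bound requires summing over the layers $\{d_E<2^{-k}\rho\}$, convergent for $|\sigma|<\kappa/2$); since $r\simeq r'$, the resulting ratio bound degenerates only through factors like $(8/\tau)^{|\sigma|}$, which tend to $1$ as $\sigma\to0$, giving \eqref{eq:delta reg condition} for $|\sigma|<\sigma_0(\delta)$ with continuous dependence. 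If you recast your two cases as estimates on $\fint_{B_x}\fint_{B_{x'}}|d_E(y)^\sigma-d_E(z)^\sigma|\,d\mu(z)\,d\mu(y)$ (splitting the domain of integration, not fixing $(y,z)$), your layer-sum computation can be salvaged along these lines, but the uniform-in-$(y,z)$ bound must be abandoned.
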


\begin{proof}
    Let $\delta>0$, and let $x,x'\in\Omega$ and $r,r'>0$ satisfy the conditions of Definition~\ref{def:reg weights}.  Let $B:=B(x,r)$ and $B':=B(x',r')$.  Let $\sigma\in\R$ and set $w:=d(\cdot, E)^\sigma$. We consider four cases, depending on the sign of $\sigma$ and the distance of $x$ to $E$ relative to $r$.
    \vskip.2cm
    \noindent\emph{\bf Case I:} Suppose that $\sigma\ge 0$ and $d(x,E)\ge 7r$.  Since $2B'\cap B\ne\varnothing$, it follows that $d(x',E)>2r'$, and so we have that
    \begin{align*}
        \left(\frac{6}{7}d(x,E)\right)^\sigma&\le w_{B}\le\left(\frac{8}{7}d(x,E)\right)^\sigma
    \end{align*}
    and also that
    \begin{align*}
        \left(\frac{1}{2}d(x',E)\right)^\sigma&\le w_{B'}\le\left(\frac{3}{2}d(x',E)\right)^\sigma.        
    \end{align*}
    Since $2B'\cap B\ne\varnothing$, it also follows that 
    \[
    \frac{3}{7}d(x,E)\le d(x',E)\le\frac{11}{7}d(x,E).
    \]
    Combining these estimates, we have that 
    \begin{align}
        \left|w_{B'}-w_{B}\right|&=\left|w_{B'}/w_{B}-1\right|w_{B}\nonumber\\
        &\le \max\left\{\left(\frac{11}{4}\right)^\sigma-1,1-\left(\frac{3}{16}\right)^\sigma\right\}w_{B}
    \end{align}
    Hence \eqref{eq:delta reg condition} holds in this case if 
    \begin{equation*}
    0\le|\sigma|<\min\left\{\frac{\log(\delta+1)}{\log(11/4)},\frac{\log(1/(1-\delta))}{\log(16/3)}\right\}=:\sigma_1.
    \end{equation*}
\vskip.2cm
\noindent{\bf Case II:}
    Suppose now that $\sigma\ge 0$ and $d(x,E)<7r$.  In this case, we also have that $d(x',E)<14r'$ since $2B'\cap B\ne\varnothing$.  Since $\sigma\ge0$, we have that 
    \[    w(B)\le(d(x,E)+r)^\sigma\mu(B)<(8r)^\sigma\mu(B).
    \]
    For $0<\rho<r$, we have by Lemma~\ref{lem:PorosityLayers} that
    \begin{align*}
        w(B)\ge w(B\setminus\{y\in X:d(y,E)<\rho\})&\ge\rho^\sigma\mu(B\setminus\{y\in X:d(y,E)<\rho\})\\
        &\ge\rho^\sigma\left(1-C(\rho/r)^\kappa\right)\mu(B),
    \end{align*}
    where $C\ge 1$ and $\kappa>0$ are the constants from Lemma~\ref{lem:PorosityLayers}.  Combining these estimates, we have that 
    \[
    \rho^\sigma\left(1-C(\rho/r)^\kappa\right)\le w_{B}\le(8r)^\sigma.
    \]
    By the same argument for $0<\rho<r'$, we also have 
    \[
    \rho^\sigma\left(1-C(\rho/r')^\kappa\right)\le w_{B'}\le(15r')^\sigma.
    \]
    Thus for $0<\rho<\min\{r,r'\}$, it follows that 
    \begin{align}\label{eq:6-2-24 Case I-ii}
      \left|w_{B'}-w_{B}\right|&=\left|w_{B'}/w_{B}-1\right|w_{B}\nonumber\\
      &\le \max\left\{\frac{(8r)^\sigma}{\rho^\sigma(1-C(\rho/r')^\kappa)}-1,\,1-\frac{\rho^\sigma(1-C(\rho/r)^\kappa)}{(15r')^\sigma}
      \right\}w_{B}.
      \end{align}

If $\rho=\tau_1 r$, where 
\[
\tau_1:=\frac{7}{10}\left(C^{-1}\left(1-\frac{1}{1+\delta/2}\right)\right)^{1/\kappa},
\]
then
\begin{align*}
\frac{(8r)^\sigma}{\rho^\sigma(1-C(\rho/r')^\kappa)}-1=\left(\frac{8}{\tau_1}\right)^\sigma\frac{1}{1-C(\tau_1r/r')^\kappa}-1&\le\left(\frac{8}{\tau_1}\right)^\sigma\frac{1}{1-C(10\tau_1/7)^\kappa}-1\\
	&=\left(\frac{8}{\tau_1}\right)^\sigma(1+\delta/2)-1.
\end{align*}
Likewise, if $\rho=\tau_2r'$, where 
\[
\tau_2:=\frac{2}{3}\left(\frac{\delta}{2C}\right)^{1/\kappa},
\]
we have that 
\begin{align*}
1-\frac{\rho^\sigma(1-C(\rho/r)^\kappa)}{(15r')^\sigma}=1-\left(\frac{\tau_2}{15}\right)^\sigma\left(1-C(\tau_2r'/r)^\kappa\right)&\le 1-\left(\frac{\tau_2}{15}\right)^\sigma\left(1-C(3\tau_2/2)^\kappa\right)\\
	&=1-\left(\frac{\tau_2}{15}\right)^\sigma\left(1-\delta/2\right).
\end{align*}
Hence, from these estimates and \eqref{eq:6-2-24 Case I-ii} it follows that \eqref{eq:delta reg condition} holds in this case if 
\begin{equation*}
0\le|\sigma|<\min\left\{\frac{\log((1+\delta)/(1+\delta/2))}{\log(8/\tau_1)},\,\frac{\log((1-\delta/2)/(1-\delta))}{\log(15/\tau_2)}\right\}=:\sigma_2.
\end{equation*}

\noindent{\bf Case III:}  Suppose now that $\sigma<0$ and $d(x,E)\ge 7r$.  In this case, by repeating the computations from Case I, we  obtain
\begin{align*}
        \left(\frac{8}{7}d(x,E)\right)^\sigma&\le w_{B}\le\left(\frac{6}{7}d(x,E)\right)^\sigma
    \end{align*}
   as well as
    \begin{align*}
        \left(\frac{3}{2}d(x',E)\right)^\sigma&\le w_{B'}\le\left(\frac{1}{2}d(x',E)\right)^\sigma.        
    \end{align*} 
We note that since $\sigma<0$, these are the reverse of the inequalities obtained in Case I.  From these estimates, it then follows that 
  \begin{align*}
        \left|w_{B'}-w_{B}\right|&=\left|w_{B'}/w_{B}-1\right|w_{B}\nonumber\\
        &\le \max\left\{1-\left(\frac{11}{4}\right)^\sigma,\left(\frac{3}{16}\right)^\sigma-1\right\}w_{B}.
    \end{align*}
Thus,  \eqref{eq:delta reg condition} holds in this case if 
\begin{align*}
0<|\sigma|<\min\left\{\frac{\log(1/(1-\delta))}{\log(11/4)},\,\frac{\log(1+\delta)}{\log(16/3)}\right\}=:\sigma_3.
\end{align*}

\noindent{\bf Case IV:} Suppose now that $\sigma<0$ and $d(x,E)<7r$.  In this case, we have that 
\begin{align*}
w(B)\ge (8r)^\sigma\mu(B),
\end{align*}  
and since $d(x',E)<14r'$, it follows that 
\begin{align*}
w(B')\ge(15r')^\sigma\mu(B').
\end{align*}

Let $0<\rho\le r$, and for each $k\in\N\cup\{0\}$, let 
\[
N_k:=\{y\in X:d(y,E)<2^{-k}\rho\}.
\]
We then have that 
\begin{align*}
w(B)&=\int_{B\cap N_0}d(\cdot,E)^\sigma d\mu+\int_{B\setminus N_0}d(\cdot,E)^\sigma d\mu\\
	&\le\sum_{k=0}^\infty\int_{B\cap N_k\setminus N_{k+1}}d(\cdot,E)^\sigma d\mu+\rho^\sigma\mu(B\setminus N_0)\\
	&\le\sum_{k=0}^\infty(2^{-k-1}\rho)^\sigma\mu(B\cap N_k)+\rho^\sigma\mu(B).
\end{align*}
By Lemma~\ref{lem:PorosityLayers}, we have that 
\[
\mu(B\cap N_k)\le C\left(\frac{2^{-k}\rho}{r}\right)^\kappa\mu(B),
\]
and so substituting this into the previous expression, we obtain
\begin{align*}
w(B)\le C2^{|\sigma|}\left(\frac{\rho}{r}\right)^\kappa\rho^\sigma\mu(B)\sum_{k=0}^\infty 2^{-k(\kappa-|\sigma|)}+\rho^\sigma\mu(B).
\end{align*}
Thus, for $0<|\sigma|<\kappa/2$, and setting 
\begin{equation*}
C_1:=C\frac{2^{\kappa/2}}{1-2^{-\kappa/2}}  ,
\end{equation*}
we have that 
\begin{align*}
w(B)\le\rho^\sigma\mu(B)\left(C_1\left(\frac{\rho}{r_i}\right)^\kappa+1\right).
\end{align*}
Recall that the $C\ge 1$ and $\kappa>0$ are the constants from Lemma~\ref{lem:PorosityLayers}.  For $0<\rho<r'$, the same computation, with $0<|\sigma|<\kappa/2$ gives us 
\[
w(B')\le\rho^\sigma\mu(B')\left(C_1\left(\frac{\rho}{r'}\right)^\kappa+1\right).
\]
Combining these two estimates with the corresponding lower bound estimates above, we obtain for $0<\rho\le\min\{r,r'\}$,
\begin{align*}
(8r)^\sigma\le w_{B}\le\rho^\sigma\left(C_1\left(\frac{\rho}{r}\right)^\kappa+1\right)
\end{align*}
and
\begin{align*}
(15r')^\sigma\le w_{B'}\le\rho^\sigma\left(C_1\left(\frac{\rho}{r'}\right)^\kappa+1\right).
\end{align*}
It then follows that 
\begin{align}\label{eq:Porosity Case IV}
 \left|w_{B'}-w_{B}\right|&=\left|w_{B'}/w_{B}-1\right|w_{B}\nonumber\\
        &\le\max\left\{\left(\frac{\rho}{8r}\right)^\sigma\left(C_1\left(\frac{\rho}{r'}\right)^\kappa+1\right)-1,\,1-\left(\frac{15r'}{\rho}\right)^\sigma\left(C_1\left(\frac{\rho}{r}\right)^\kappa+1\right)^{-1}\right\}w_{B}.
\end{align}

Let $\rho:=\min\{\tau_3r',\tau_4r\}$, where
\[
\tau_3:=\left(\frac{\delta}{2C_1}\right)^{1/\kappa}
\]
and 
\[
\tau_4:=\left(C_1^{-1}\left(\frac{1}{1-\delta/2}-1\right)\right)^{1/\kappa}.
\]
Note that $\rho\le\min\{r,r'\}$, and by 
the assumptions on $x$, $x'$, $r$, and $r'$, we also have that $\rho\ge\min\{7\tau_3/10,\tau_4\}r$, and $\rho\ge\min\{\tau_3,2\tau_4/3\}r'$. From these choices, it then follows that 
\begin{align*}
\left(\frac{\rho}{8r}\right)^\sigma\left(C_1\left(\frac{\rho}{r'}\right)^\kappa+1\right)-1\le\left(\frac{8}{\min\{7\tau_3/10,\tau_4\}}\right)^{|\sigma|}(1+\delta/2)-1.
\end{align*} 
Similarly, we have that 
\begin{align*}
1-\left(\frac{15r'}{\rho}\right)^\sigma\left(C_1\left(\frac{\rho}{r}\right)^\kappa+1\right)^{-1}\le 1-\left(\frac{\min\{\tau_3,2\tau_4/3\}}{15}\right)^{|\sigma|}(1-\delta/2).
\end{align*}
From \eqref{eq:Porosity Case IV}, we then see that \eqref{eq:delta reg condition} holds in this case if 
\begin{align*}
0<|\sigma|<\min\left\{\frac{\log((1+\delta)/(1+\delta/2))}{\log(8/\min\{7\tau_3/10,\tau_4\})},\frac{\log((1-\delta/2)/(1-\delta))}{\log(15/\min\{\tau_3,2\tau_4/3\})}\right\}=:\sigma_4.
\end{align*}

Having exhausted all cases, we complete the proof by taking $\sigma_0:=\min_{1\le i\le 4}\sigma_i$.  We note that in addition to depending on $\delta$, the constant $\sigma_0$ depends only on the constants $C$ and $\kappa$ from Lemma~\ref{lem:PorosityLayers}, which depend only on the porosity constant of $E$ and the doubling constant of $\mu$.  \qedhere

    \end{proof}

\section{The hyperbolic filling of a compact, doubling metric measure space}\label{sec:HypFill}
We now turn our attention towards the fractional Hardy inequality.  Recall from Definition~\ref{def:Frac Hardy} that for $0<\theta<1$ and $1<p<\infty$, the right-hand side of the $(\theta,p)$-Hardy inequality is given by the Besov energy \eqref{eq:Besov energy}.  The Besov spaces, defined by \eqref{eq:Besov energy}, arise naturally in both the Euclidean and metric settings as traces to the boundary of Sobolev functions defined on a domain, provided the domain satisfies certain geometric assumptions, see for example \cite{Gal,JW,Ma}. It was shown in \cite{BBS} that Besov spaces on a compact, doubling metric measure space $(Z,d,\nu)$ always arise as traces of Newton-Sobolev functions on an auxiliary metric measure space which has $Z$ as a boundary.  We  now outline the construction from \cite{BBS}, as well as some properties of the resulting space, which will allow us in the subsequent sections to relate $(\theta,p)$- and $p$-Hardy inequalities.

Throughout this section, we let $(Z,d,\nu)$ be a compact, doubling metric measure space and assume that $\diam(Z)<1$.  Fix $\alpha,\tau>1$, and fix $z_0\in Z$.  Let $A_0:=\{z_0\}$, and for each $i\in\N$, let $A_i$ be a maximally $\alpha^{-i}$-separated subset of $Z$, constructed inductively so that $A_i\subset A_{i+1}$. For each $i\in\N\cup\{0\}$, we define the $i$-th level vertices by 
\[
V_i:=\bigcup_{z\in A_i}(z,i),
\]
and define the vertex set
\[
V=\bigcup_{i=0}^\infty\bigcup_{z\in A_i}(z,i).
\]
Given $(z,i),(y,j)\in V$, we say that $(z,i)$ and $(y,j)$ are neighbors, denoted $(z,i)\sim(y,j)$, if and only if $|i-j|\le 1$ and in addition, either $i=j$ and $B_Z(z,\tau\alpha^{-i})\cap B_Z(y,\tau\alpha^{-j})\ne\varnothing$, or $i=j\pm 1$ and $B_Z(z,\alpha^{-i})\cap B_Z(y,\alpha^{-j})\ne\varnothing$. We then turn the vertex set $V$ into a metric graph $X$ by gluing a unit interval $[v,w]$ between each pair of neighboring vertices $v,w$.  We denote by $d_X$ the path metric on $X$. 

For $\eps>0$, we consider the uniformized metric on $X$, given by 
\begin{equation*}
    d_\eps(x,y):=\inf_\gamma\int_\gamma e^{-\eps d_X(\cdot,v_0)}ds,
\end{equation*}
where the infimum is taken over all paths $\gamma$ in $X$ with endpoints $x$ and $y$. 
We denote by $\oXeps$ the completion of $X$ with respect to the metric $d_\eps$, and we set $\partial_\eps X:=\oXeps\setminus X$. We refer to $(\oXeps,d_\eps)$ as the \emph{uniformized hyperbolic filling} of $(Z,d,\nu)$.  For clarity, balls in $Z$, taken with respect to the $d$, are denoted by $B_Z$, while balls in $\oXeps$ taken with respect to $d_\eps$ are denoted by $B_\eps$. 

For $\beta>0$, we define the vertex weights $\hat\mu_\beta$ by
\begin{equation*}
    \hat\mu_\beta((z,i))=e^{-\beta i}\nu(B_Z(z,\alpha^{-i})).
\end{equation*}
We then define the measure $\mu_\beta$ on $X$ by
\begin{equation*}
    \mu_\beta(A):=\sum_{v\in V}\sum_{w\sim v}\left(\hat\mu_\beta(v)+\hat\mu_\beta(w)\right)\Ha^{1}(A\cap[v,w]).
\end{equation*}
From this definition, we see that if $\sigma\in\R$ is such that $|\sigma|<\beta$, then 
\begin{equation}\label{eq:WeightedMeasure}
\mu_{\beta+\sigma}\simeq d_\eps(\cdot,Z)^{\sigma/\eps}d\mu_\beta.
\end{equation} 
That is, perturbing the parameter $\beta$ is equivalent to weighting the measure $\mu_\beta$, see \cite[Section~10]{BBS}.  The following summarizes the fundamental properties of $(\oXeps,d_\eps,\mu_\beta)$, obtained in \cite{BBS}:

\begin{thm}\cite[Theorem~1.1]{BBS}\label{thm:HypFill}
    Let $(Z,d,\nu)$ be a compact metric measure space, $\diam(Z)<1$, with $\nu$ a doubling measure. Let $\alpha,\tau>1$ and let $\eps:=\log\alpha$.  Then for all $\beta>0$, $(\overline X_\eps,d_\eps,\mu_\beta)$ satisfies the following:
    \begin{enumerate}
        \item $(\oXeps,d_\eps)$ is a geodesic space.
        \item $(Z,d)$ is bi-Lipschitz equivalent to $(\partial_\eps X,d_\eps)$, with bi-Lipschitz constant depending only on $\alpha$ and $\tau$. 
        \item $(X,d_\eps,\mu_\beta)$ and $(\overline X_\eps,d_\eps,\mu_\beta)$ are doubling and support a $(1,1)$-Poincar\'e inequality. 
        \item For all $z\in Z$ and $0<r<2\diam (Z)$, we have that 
        \[
        \nu(B_\eps(z,r)\cap Z)\simeq\frac{\mu_\beta(B_\eps(z,r))}{r^{\beta/\eps}}.
        \]
        \item If $1\le p<\infty$ and $0<\theta<1$ are such that $\beta/\eps=p(1-\theta)$, then there exist bounded trace and extension operators
        \[
        T:N^{1,p}(\overline X_\eps,\mu_\beta)\to B^\theta_{p,p}(Z,\nu)\quad and \quad E:B^\theta_{p,p}(Z,\nu)\to N^{1,p}(\overline X_\eps,\mu_\beta),
        \] 
        such that $T\circ E=Id$.  Moreover, for all $u\in N^{1,p}(\oXeps,\mu_\beta)$ and $f\in B^\theta_{p,p}(Z,\nu)$, we have
        \begin{equation*}
            \|Tu\|^p_{B^\theta_{p,p}(Z,\nu)}\lesssim\int_{\oXeps}g_u^p\,d\mu_\beta,\qquad \int_{\oXeps}g_{Ef}^p\,d\mu_\beta\lesssim\|f\|^p_{B^\theta_{p,p}(Z,\nu)}.
        \end{equation*}
    \end{enumerate}
    In the above results, the constants depend only on $\alpha$, $\tau$, $\beta$, and $C_\nu$.
\end{thm}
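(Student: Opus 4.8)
This is \cite[Theorem~1.1]{BBS}, so in the paper it is invoked by citation; nonetheless, here is how I would organize a proof. The conceptual backbone is that the metric graph $(X,d_X)$ assembled from the nets $\{A_i\}$ is a \emph{proper, geodesic, Gromov hyperbolic} space whose Gromov boundary, equipped with a visual metric of parameter $\eps=\log\alpha$, is bi-Lipschitz to $(Z,d)$ --- the hyperbolic-filling dictionary of \cite{BP,BSaks} with roots in \cite{Gromov}. I would first record that $X$ is locally finite with vertex degrees bounded by some $N=N(C_\nu,\alpha,\tau)$ --- doubling of $\nu$ forces only boundedly many $\alpha^{-i}$-separated net points into a fixed dilate of a given ball --- so that $X$ is proper; and that the Gromov product $(u\mid v)_{v_0}$ of $u=(z,i)$ and $v=(y,j)$ at the basepoint $v_0=(z_0,0)$ is comparable, up to a bounded additive error, to $\min\{i,\,j,\,\log_\alpha(1/d(z,y))\}$, from which Gromov hyperbolicity is immediate. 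Granting this, item (1) follows from the standard fact that uniformizing this space with the density $e^{-\eps d_X(\cdot,v_0)}$, $\eps=\log\alpha$, produces a bounded, complete, locally compact length space, hence a geodesic one; and item (2) is the identification of $\partial_\eps X=\oXeps\setminus X$ with that Gromov boundary together with $d(z,y)\simeq\alpha^{-(z\mid y)_{v_0}}\simeq d_\eps(z,y)$, the constants depending only on $\alpha,\tau$.

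For (3) and (4) the engine is one ball computation. Using the uniformization estimate $d_\eps(x,\partial_\eps X)\simeq e^{-\eps d_X(x,v_0)}\simeq\alpha^{-i}$ for $x$ on an edge near level $i$, I would show that for $z\in Z$ and $0<r<2\diam Z$, writing $\alpha^{-t}\simeq r$, the ball $B_\eps(z,r)$ is comparable to the ``shadow cone'' $\{(y,i):i\gtrsim t,\ y\in A_i\cap B_Z(z,Cr)\}$ together with its incident edges and a radius-$\simeq r$ piece of $Z$; since a maximal $\alpha^{-i}$-net gives a bounded-overlap cover $\{B_Z(y,\alpha^{-i})\}_{y\in A_i}$ of $Z$, one has $\sum_{y\in A_i\cap B_Z(z,Cr)}\nu(B_Z(y,\alpha^{-i}))\simeq\nu(B_Z(z,r))$ for $i\ge t$, whence
\[
\mu_\beta(B_\eps(z,r))\;\simeq\;\nu(B_Z(z,r))\sum_{i\ge t}e^{-\beta i}\;\simeq\;\nu(B_Z(z,r))\,e^{-\beta t}\;\simeq\;r^{\beta/\eps}\,\nu(B_Z(z,r)),
\]
which is (4); an easier count for interior balls, splitting on whether $r$ is below or above $d_\eps(x,\partial_\eps X)$, yields doubling of $\mu_\beta$ on $\oXeps$, and this passes to $X$ since $\partial_\eps X$ is $\mu_\beta$-null. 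For the $(1,1)$-Poincar\'e inequality I would chain averages of $u$ along a uniformized curve joining the two points --- whose $d_\eps$-length is comparable to $d_\eps(x,y)$ and which meets a controlled, boundedly-overlapping chain of balls --- the measure estimate above closing the telescoping; obtaining a $(1,1)$- and not merely a $(1,p)$-inequality reflects that the graph is connected with uniformly bounded geometry.

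The heart of the theorem is (5), where the coupling $\beta/\eps=p(1-\theta)$ enters. For extension, given $f\in B^\theta_{p,p}(Z,\nu)$ I would set $(Ef)(z,i):=f_{B_Z(z,\alpha^{-i})}$ and interpolate affinely in $d_\eps$-arclength along each edge, so that on an edge $e$ near level $i$ one has $g_{Ef}\simeq|Ef(v)-Ef(w)|/\ell_e$ with $\ell_e\simeq\alpha^{-i}$ and $\mu_\beta(e)\simeq\hat\mu_\beta(v)+\hat\mu_\beta(w)\simeq e^{-\beta i}\nu(B_Z(z,\alpha^{-i}))$; then
\[
\int_e g_{Ef}^p\,d\mu_\beta\;\simeq\;\ell_e^{-p}\,\mu_\beta(e)\,|Ef(v)-Ef(w)|^p\;\simeq\;\alpha^{p\theta i}\,\nu(B_Z(z,\alpha^{-i}))\,|Ef(v)-Ef(w)|^p,
\]
the exponents collapsing precisely because $\alpha^{pi}e^{-\beta i}=\alpha^{pi}\alpha^{-p(1-\theta)i}=\alpha^{p\theta i}$. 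Summing over edges, controlling same-level differences by adjacent parent--child ones and recognizing the parent--child differences $|Ef(v)-Ef(w)|$ as the net-scale martingale increments of $f$, the total becomes equivalent to a discretization $\sum_i\alpha^{\theta p i}\|E_{i+1}f-E_i f\|_{L^p(Z,\nu)}^p$ of the Besov energy \eqref{eq:Besov energy}, giving $\int_{\oXeps}g_{Ef}^p\,d\mu_\beta\lesssim\|f\|_{B^\theta_{p,p}(Z,\nu)}^p$. For the trace I would set $Tu(z):=\lim_{r\to0}\fint_{B_\eps(z,r)}u\,d\mu_\beta$; this limit exists for $\nu$-a.e.\ $z$ because, by the $(1,p)$-Poincar\'e inequality and $\mu_\beta(B_\eps(z,r))\simeq r^{\beta/\eps}\nu(B_Z(z,r))$, the telescoping series $\sum_k\alpha^{-k}\bigl(\fint_{B_\eps(z,C\alpha^{-k})}g_u^p\,d\mu_\beta\bigr)^{1/p}$ is $\nu$-a.e.\ finite (an $L^p(Z,\nu)$ bound follows by Fubini and the maximal function, again using $\beta/\eps>0$); estimating $|Tu(z)-Tu(y)|$ by telescoping $u$ from $z$ up to level $\approx\log_\alpha(1/d(z,y))$, across, and back down to $y$, then raising to the $p$-th power and integrating against the Besov kernel over $Z\times Z$ --- the level sums converging by the same exponent identity and the bounded overlap of the Carleson-type boxes over boundary balls --- yields $\|Tu\|_{B^\theta_{p,p}(Z,\nu)}^p\lesssim\int_{\oXeps}g_u^p\,d\mu_\beta$; finally $T\circ E=\Id$ because $(Ef)_{B_\eps(z,r)}\to f(z)$ at $\nu$-Lebesgue points of $f$.

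I expect the hard part to be precisely this exponent bookkeeping in (5): one must track, uniformly across all levels and right up to the boundary $\partial_\eps X=Z$ where balls degenerate, how the edge lengths $\alpha^{-i}$, the vertex weights $e^{-\beta i}\nu(B_Z(\cdot,\alpha^{-i}))$, the net scale $\alpha^{-i}$ in $Z$, and the fractional denominator $d(\cdot,\cdot)^{\theta p}$ interact, and verify that $\beta/\eps=p(1-\theta)$ is exactly the relation making the resulting geometric series in $i$ summable for extension and the Besov integral reassemble for the trace, all with constants depending only on $\alpha,\tau,\beta,C_\nu$. A secondary difficulty is keeping the chaining arguments --- for the Poincar\'e inequality and for the trace estimate --- uniform near the boundary, where one leans on $\oXeps$ being a uniform space, so that points may be joined to and through $Z$ by curves of controlled length with no narrow bottlenecks.
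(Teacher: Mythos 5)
This theorem is not proved in the paper at all—it is quoted from \cite{BBS} (Theorem~1.1 there)—so the only meaningful comparison is with the cited source, and your outline follows essentially the same route as that proof: Gromov hyperbolicity of the filling plus uniformization for (1)--(2), the vertex-weight ball computation giving the codimension relation, doubling, and the $(1,1)$-Poincar\'e inequality via chaining for (3)--(4), and the ball-average extension with piecewise-linear interpolation on edges together with the Lebesgue-point trace, with $\beta/\eps=p(1-\theta)$ doing the exponent bookkeeping, for (5). Your sketch is a faithful blueprint of that argument, with the understanding that the substantial technical work (the uniformization machinery, the a.e.\ existence and Besov bound for the trace, and keeping the chaining uniform up to the boundary) is what occupies the bulk of \cite{BBS}.
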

The trace and extensions operators above have the following explicit constructions: for $u\in N^{1,p}(\oXeps,\mu_\beta)$ and $z\in Z$, we have 
\begin{equation*}
    Tu(z)=\lim_{r\to 0^+}\fint_{B_\eps(z,r)}u\,d\mu_\beta.
\end{equation*}
Likewise, for $f\in B^\theta_{p,p}(Z,\nu)$, $Ef$ is first defined on the vertices of $X$ by
\begin{equation}\label{eq:HypFillExtension}
    Ef((z,i))=\fint_{B_Z(z,\alpha^{-i})}f\,d\nu.
\end{equation}
To define $Ef$ on all of $X$, these values are then extended piece-wise linearly (with respect to $d_\eps$) to the edges of $X$.   

Balls centered at vertices of $\oXeps$ form a Whitney-type covering of $X$, as shown in the following lemma.  We first note that if $\alpha>1$ and $\eps=\log\alpha$, then for all $i\in\N\cup\{0\}$ and $z\in A_i$, we have that
\begin{equation}\label{eq:VertexToBoundary}    d_\eps((z,i),Z)=d_\eps((z,i),z)=\frac{\alpha^{-i}}{\log\alpha}.
\end{equation}
This can be shown by direct computation using the definition of $d_\eps$, along with the fact that $(z,i+j)\in V$ for all $j\in\N$, see \cite[Section~4]{BBS}.
\begin{lem}\label{lem:HypFillWhitney}
    Let $\alpha=e^{1/4}$, and let $\eps:=\log\alpha$. Then the following hold:
    \begin{enumerate}
        \item For all $i\in\N\cup\{0\}$ and $z\in A_i$, we have that $d_\eps((z,i),Z)=4\alpha^{-i}$.\label{eq:WhitneyCover 0}
        \item $X=\bigcup_{i=0}^\infty\bigcup_{z\in A_i}B_\eps((z,i),\alpha^{-i})$.\label{eq:WhitneyCover 1}
        \item $\sum_{i=0}^\infty\sum_{z\in A_i}\chi_{ B_\eps\left((z,i),\alpha^{-i}\right)}\le C:=C(\alpha,\tau,C_\nu)$.\label{eq:WhitneyCover 2}
    \end{enumerate}
   \end{lem}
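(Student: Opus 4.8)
\textbf{Proof plan for Lemma~\ref{lem:HypFillWhitney}.} The plan is to verify the three claims directly from the construction, starting from the identity \eqref{eq:VertexToBoundary}. For (1), I observe that with $\alpha=e^{1/4}$ we have $\eps=\log\alpha=1/4$, so \eqref{eq:VertexToBoundary} immediately gives $d_\eps((z,i),Z)=\alpha^{-i}/\log\alpha=4\alpha^{-i}$; this is purely a substitution. The content of claim (1) is really just fixing the normalization $\alpha=e^{1/4}$ so that the Whitney radius $\alpha^{-i}$ is comparable (with the clean constant $4$) to the distance to the boundary, which is what makes $\{B_\eps((z,i),\alpha^{-i})\}$ behave like a genuine Whitney cover.

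For (2), I would show that every point $x\in X$ lies in one of the balls $B_\eps((z,i),\alpha^{-i})$. A point $x\in X$ lies on some edge $[v,w]$ with $v=(z,i)$ a vertex of level $i$ (taking $v$ to be the endpoint of lower or equal level, say). Since edges are glued between neighboring vertices, $d_X(x,v)\le 1$, and then integrating $e^{-\eps d_X(\cdot,v_0)}$ along the subsegment of the edge from $x$ to $v$ gives a bound of the form $d_\eps(x,v)\lesssim e^{-\eps d_X(v,v_0)}\simeq \alpha^{-i}$ using that $d_X(v,v_0)$ is within a bounded additive constant of $i$ (this is a standard estimate from \cite{BBS}, coming from the fact that $(z,i)$ connects down to $(z_0,0)$ through a chain of neighbors of decreasing level). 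One must check the implied constant here is strictly less than $1$; if the crude estimate from a single edge is not quite good enough, one instead notes that $x$ is within distance $\lesssim \alpha^{-i}$ of a vertex of the \emph{lower} of the two levels incident to its edge, and uses the comparability $\alpha^{-(i+1)}\simeq\alpha^{-i}$ together with the maximality/separation of $A_i$ to land inside the appropriate ball. Alternatively, since $A_i\subset A_{i+1}$ and the $A_i$ are maximal $\alpha^{-i}$-separated, one can argue that the nearest vertex in the relevant level is close enough.

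For (3), the bounded overlap, I would fix a point $x\in X$ and count the pairs $(z,i)$ with $x\in B_\eps((z,i),\alpha^{-i})$. First, since $d_\eps((z,i),Z)=4\alpha^{-i}$ by (1) and $d_\eps(x,(z,i))<\alpha^{-i}$, the triangle inequality forces $3\alpha^{-i}<d_\eps(x,Z)<5\alpha^{-i}$, so only boundedly many levels $i$ (in fact an interval of length depending only on $\alpha$) can occur. For each such fixed level $i$, the centers $(z,i)$ with $z\in A_i$ and $d_\eps(x,(z,i))<\alpha^{-i}$ all lie in a $d_\eps$-ball of radius $2\alpha^{-i}$ around $x$; pushing this down to $Z$ via the bi-Lipschitz relation between $d_\eps$-distances near level $i$ and rescaled $d$-distances (again from \cite{BBS}, or directly: $d_\eps$ restricted to level-$i$ vertices is comparable to $\alpha^{-i}$ times the graph distance, which in turn controls $d$-separation), one sees these $z$ are contained in a $d$-ball of radius $\lesssim\alpha^{-i}$ and are $\alpha^{-i}$-separated, so by the doubling property of $\nu$ there are at most $C(\alpha,\tau,C_\nu)$ of them. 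Multiplying the two bounds gives the uniform overlap constant.

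The main obstacle is the quantitative estimate underlying (2): one needs $d_\eps(x,(z,i))<\alpha^{-i}$ \emph{strictly}, not merely up to a constant, for a suitable nearby vertex. This requires being a little careful about which vertex to choose and exploiting that a point on an edge of level-pair $\{i,i+1\}$ is at $d_X$-distance at most $1$ from a vertex, combined with the uniformization weight $e^{-\eps d_X(\cdot,v_0)}$ decaying along the way; the choice $\alpha=e^{1/4}$ (hence $\eps=1/4$, a small uniformization parameter) is exactly what buys enough room in this estimate. I expect this to follow from the integral formula for $d_\eps$ exactly as in \cite[Section~4]{BBS}, so the proof is mostly a matter of assembling these standard hyperbolic-filling estimates with the specific normalization $\alpha=e^{1/4}$.
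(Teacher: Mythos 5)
Your plan is correct and essentially the paper's argument: (1) is just the substitution into \eqref{eq:VertexToBoundary}; (2) is proved edge by edge exactly as you propose, and the strict bound you flag does hold because $d_X((z,i),v_0)$ equals $i$ exactly (not merely up to an additive constant), so with $\eps=1/4$ a vertical edge below a level-$i$ vertex has $\eps$-length $4(1-e^{-1/4})\alpha^{-i}<\alpha^{-i}$ and a horizontal edge is covered by the balls at its two endpoints, making your ``maximality of $A_i$'' fallback unnecessary; and (3) is the same separation-plus-doubling count. The only cosmetic difference is in (3): you bound the admissible levels via part (1) and the triangle inequality to $Z$, whereas the paper shows directly that balls whose levels differ by at least $3$ are disjoint by integrating along the forced vertical crossings; both yield the same uniform bound.
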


\begin{proof}
Claim \eqref{eq:WhitneyCover 0} follows directly from \eqref{eq:VertexToBoundary}. To prove Claim \eqref{eq:WhitneyCover 1}, let $v,w\in V$ be such that $v\sim w$.  Without loss of generality, we may assume that $v=(z,i)$ and $w=(y,j)$ with $j\in\{i,i+1\}$.  If $j=i$, then by the definition of $d_\eps$ along with \cite[Proposition~4.4]{BBS}, we have that 
\begin{align*}
\frac{\alpha^{-i}}{2\tau\alpha}\le\frac{d_Z(z,y)}{2\tau\alpha}\le d_\eps(z,w)\le\ell_\eps([v,w])=\alpha^{-i}.
\end{align*}
If $j=i+1$, then we have from the definition of $d_\eps$ and choice of $\alpha=e^{1/4}$ that 
\[
d_\eps(v,w)=\ell_\eps([v,w])=\alpha^{-i}\left(\frac{\alpha-1}{\alpha\log\alpha}\right)<\alpha^{-i}.
\]
In either case, it follows that $[v,w]\subset B_\eps(v,\alpha^{-i})\cup B_\eps(w,\alpha^{-j})$.  By our choice of $\alpha=e^{1/4}$, it follows from \eqref{eq:VertexToBoundary} that $\dist_\eps(v,Z)=4\alpha^{-i}$.  Thus, $B_\eps(v,\alpha^{-i})\cap Z=\varnothing$, and so \eqref{eq:WhitneyCover 1} holds.

To prove \eqref{eq:WhitneyCover 2}, we first claim that if $v=(z,i)$ and $w=(y,j)$ are vertices such that $j=i+k$ for some $k\ge 3$, then $B_\eps(v,\alpha^{-i})\cap B_\eps(w,\alpha^{-j})=\varnothing$.  Indeed, if $\gamma$ is a path in $X$ joining $v$ to $w$, then $\gamma$ must contain a vertical edge joining the $(i+l-1)$-th level of vertices to the $(i+l)$-th level of vertices for each $1\le l\le k$.  Thus, it follows that 
\[
d_\eps(v,w)\ge \int_{i}^{i+k}e^{-\eps t}dt=\frac{\alpha^{-i}(1-\alpha^{-k})}{\log\alpha}.
\]
Since $k\ge 3$, and since $\alpha=e^{1/4}$, it follows that 
\[
d_\eps(v,w)\ge\frac{\alpha^{-i}(1-\alpha^{-k})}{\log\alpha}>\alpha^{-i}+\alpha^{-j},
\]
hence $B_\eps(v,\alpha^{-i})\cap B_\eps(w,\alpha^{-j})=\varnothing$, and the claim follows.

Now, let $x\in X$.  Then, there exists $i\in\N\cup\{0\}$ and $z\in A_i$ such that $x\in B_\eps((z,i),\alpha^{-i})$.  If $j\in\N\cup\{0\}$ and $y\in A_j$ are such that $x\in B_\eps((y,j),\alpha^{-j})$, then we know from the above claim that $|i-j|\le 3$. If $z\ne y$, then since $z,y\in A_{i+3}$, it follows from \cite[Proposition~4.4]{BBS} that 
\[
\alpha^{-(i+3)}\le d_Z(z,y)\le 2\tau\alpha d_\eps((z,i),(y,j))\le 2\tau\alpha(\alpha^{-i}+\alpha^{-j})\le 2\tau\alpha(1+\alpha^3)\alpha^{-i}.
\]
Therefore, since $d_Z$ is doubling, there exists a constant $C:=C(\alpha,\tau,C_\nu,\diam(Z))$ such that the set 
\[
\{y\in A_{i+3}:\exists j\in\{i-3,\dots,i+3\}\st x\in B((y,j),\alpha^{-j})\}
\]
contains at most $C$ elements.  For each $y$ in this set, the only possible vertices $(y,j)$ for which $B_\eps((y,j),\alpha^{-j})$ may contain $x$ are $\{(y,i-3),\,(y,i-2),\dots,\,(y,i+3)\}$.  Thus, it follows that the set 
\[
\{(y,j): j\in\N\cup\{0\},\,y\in A_j,\, x\in B_\eps((y,j),\alpha^{-j})\}
\] 
contains at most $7C$ elements, and so \eqref{eq:WhitneyCover 2} follows. 
\end{proof}

Recall the definition of a porous set given in Definition~\ref{def:porous}.  
\begin{lem}\label{lem:Z porous in X}
Let $\alpha,\tau>1$, and let $\eps:=\log\alpha$.  Then there exists $0<c<1$, depending only on $\alpha$, such that $Z\subset\oXeps$ is $c$-porous.  
\end{lem}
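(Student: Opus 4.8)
The plan is to unwind the definition of porosity directly, using only the explicit formula \eqref{eq:VertexToBoundary} for the distance from a vertex to $Z$, together with the uniformization estimate $d_\eps(x,Z)\simeq e^{-\eps d_X(x,v_0)}$ that comes from the theory of uniformized Gromov hyperbolic spaces (and which underlies \eqref{eq:VertexToBoundary}). Fix $x\in\oXeps$ and $0<r<\diam(\oXeps)$; we must produce $y\in B_\eps(x,r)$ with $B_\eps(y,cr)\cap Z=\varnothing$ for a suitable absolute $c=c(\alpha)$. The idea is that $Z$ is the ``boundary at infinity'' of the filling, so moving a controlled distance ``inward'' from any point lands one a controlled distance away from $Z$.

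First I would treat the case $x\in X$ (the case $x\in Z=\partial_\eps X$ follows by approximating $x$ by a nearby point of $X$, since $X$ is dense). Pick a vertex $v=(z,i)\in V$ with $d_\eps(x,v)$ comparable to $d_\eps(x,Z)$; such a vertex exists because the balls $B_\eps((z,i),\alpha^{-i})$ cover $X$ with $d_\eps((z,i),Z)=\alpha^{-i}/\log\alpha$ (Lemma~\ref{lem:HypFillWhitney} and \eqref{eq:VertexToBoundary}), so every point of $X$ lies within a bounded multiple of its own distance to $Z$ of some vertex. Now choose $n\in\N$ (depending only on $\alpha$ and the desired $c$, to be fixed at the end) and consider the ``deeper'' vertex $y:=(z,i+n)\in V$, which exists since $(z,j)\in V$ for all $j\ge i$. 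By the definition of $d_\eps$ one estimates $d_\eps(v,y)\le\int_i^{i+n}e^{-\eps t}\,dt=\frac{\alpha^{-i}(1-\alpha^{-n})}{\log\alpha}<\frac{\alpha^{-i}}{\log\alpha}$, so $y$ stays within a bounded multiple of $d_\eps(x,Z)$ of $x$; meanwhile $d_\eps(y,Z)=\alpha^{-i-n}/\log\alpha=\alpha^{-n}d_\eps(v,Z)$. The point is that the ratio $d_\eps(y,Z)/d_\eps(x,Z)$ is pinned between two positive constants depending only on $\alpha$ and $n$: it is $\gtrsim\alpha^{-n}$ times a comparability constant, and $\lesssim\alpha^{-n}$ times one. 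Then $B_\eps(y,cr)\cap Z=\varnothing$ as soon as $cr<d_\eps(y,Z)$, and since $d_\eps(y,Z)\gtrsim_\alpha\alpha^{-n}\,d_\eps(x,Z)$, it suffices to also arrange $r\lesssim d_\eps(x,Z)$.

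The remaining issue is that $r$ need not be small compared to $d_\eps(x,Z)$. To handle a given $r$, instead of going deeper from $x$ I would move $x$ \emph{toward the interior}: since $\oXeps$ is geodesic (Theorem~\ref{thm:HypFill}(1)) and unbounded-in-depth, there is a point $x'$ on a geodesic from $x$ into $X$ with $d_\eps(x,x')=r/2$, and because moving inward increases distance to $Z$ in a quantitatively controlled way (again the uniformization estimate $d_\eps(\cdot,Z)\simeq e^{-\eps d_X(\cdot,v_0)}$, so that a $d_\eps$-step of length $r/2$ inward multiplies $d_\eps(\cdot,Z)$ by a factor bounded below by a constant depending on $\alpha$ once $r\lesssim d_\eps(x,Z)$, and in general gives $d_\eps(x',Z)\gtrsim_\alpha r$), one gets $d_\eps(x',Z)\ge c_1 r$ for some $c_1=c_1(\alpha)>0$. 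Taking $y=x'$ and $c=c_1/2$ then gives $y\in B_\eps(x,r)$ and $B_\eps(y,cr)\cap Z=\varnothing$, which is exactly porosity. I expect the main obstacle to be the bookkeeping in this last step: making precise, with uniform constants, the assertion that walking a $d_\eps$-distance $t$ along a geodesic away from $Z$ increases $d(\cdot,Z)$ by at least a definite factor — i.e. the quantitative ``cone/corkscrew'' property of uniformized hyperbolic fillings. This is standard (it is implicit in \cite{BBS} and in Bonk–Heinonen–Koskela uniformization), and once it is cited or checked via the integral defining $d_\eps$, the rest is elementary.
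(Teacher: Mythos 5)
There is a genuine gap, and it sits exactly at the heart of the lemma. Your first step (find a vertex near $x$ and descend $n$ levels) only handles the regime $r\lesssim d_\eps(x,Z)$, which is in fact trivial: if $r\le d_\eps(x,Z)$ one can simply take $y=x$. The whole content of porosity is the opposite regime $r\gg d_\eps(x,Z)$, in particular $x\in Z$ itself, and there your argument reduces to the assertion that moving a $d_\eps$-distance $r/2$ along ``a geodesic from $x$ into $X$'' lands at a point with $d_\eps(\cdot,Z)\gtrsim r$. That assertion is not proven in the proposal, and as stated it is not a triviality: an arbitrary $d_\eps$-geodesic emanating from $x$ need not gain distance to $Z$ at a definite rate (it can run essentially parallel to $Z$), so to make the claim one would need to choose the curve carefully and invoke something like the uniformity of $\oXeps$ (a cigar condition for geodesics, plus the estimate $d_\eps(\cdot,Z)\simeq e^{-\eps d_X(\cdot,v_0)}$), none of which is contained in Theorem~\ref{thm:HypFill} as quoted or established in your argument. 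In effect you are citing as ``standard'' a corkscrew property of $\oXeps$ relative to $Z$ that is essentially equivalent to the porosity statement being proven, so the proposal begs the question at the decisive step.

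The missing step has a short direct proof that avoids geodesics and uniformization entirely, and this is what the paper does: given $z\in Z$ and $0<r\le\diam(Z)$, choose $i$ minimal with $\alpha^{-i}<r/K$ for a suitable $K=K(\alpha)$; since $A_i$ is a \emph{maximal} $\alpha^{-i}$-separated net, there is $z'\in A_i$ with $d_\eps(z,z')<r/K$, and the vertex $y:=(z',i)$ then satisfies $d_\eps(z,y)<r/2$ while $d_\eps(y,Z)=\alpha^{-i}/\log\alpha\simeq r$ by \eqref{eq:VertexToBoundary}, so $B_\eps\bigl(y,r/(4\alpha(1+\log\alpha))\bigr)\subset\oXeps\setminus Z$. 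If you replace your second step by this net-point selection at the scale of $r$ (rather than at the scale of $d_\eps(x,Z)$), the proof closes with $c$ depending only on $\alpha$.
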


\begin{proof}
Let $z\in Z$ and let $0<r\le\diam(Z)$.  Let $K:=2(1+1/\log\alpha)$.  Let $i\in\N\cup\{0\}$ be the smallest non-negative integer such that $\alpha^{-i}<r/K$.  Then there exists $z'\in A_{i}\cap B_\eps(z,r/K)$.  Let $v:=(z',i)\in V$.  We then have from \eqref{eq:VertexToBoundary} that 
\begin{align*}
d_\eps(z,v)\le d_\eps(z,z')+d_\eps(v,z')<\frac{r}{K}+\frac{\alpha^{-i}}{\log\alpha}<\frac{r}{K}(1+1/\log\alpha)=r/2.
\end{align*}
Thus, $v\in B_\eps(z,r/2)$.  Since $d_\eps(v,Z)=\alpha^{-i}/\log\alpha$ from \eqref{eq:VertexToBoundary}, it follows that $B_\eps(v,\alpha^{-i}/(2\log\alpha))\subset\oXeps\setminus Z$.  By our choice of $i$, we then have that 
\[
B_\eps(v,r/(4\alpha(1+\log\alpha)))\subset B_\eps(v,\alpha^{-i}/(2\log\alpha))\subset\oXeps\setminus Z.
\]
Thus, $Z$ is $c$-porous in $\oXeps$ with $c:=(4\alpha(1+\log\alpha))^{-1}$.
\end{proof}

\begin{remark}\label{rem:diam(Z)}
    The assumption that $\diam(Z)<1$ is not overly restrictive.  If $\diam(Z)\ge 1$, then we may rescale the metric, replacing $d$ with $d/(2\diam(Z))$.  In this case, analogs of the above results hold for $(Z,d,\nu)$; the only difference is that the constants present in (some of) the above results will depend additionally upon $\diam(Z)$. 
\end{remark}

\section{Fractional Hardy inequalities and the hyperbolic filling}\label{sec:FracHardy-HypFill}

In this section, we assume that $(Z,d,\nu)$ is a compact metric measure space, with $\nu$ a doubling measure.  By rescaling the metric $d$ if necessary, we may assume without loss of generality that $\diam(Z)<1$, see Remark~\ref{rem:diam(Z)}.  We fix parameters $\alpha=e^{1/4}$, $\tau=2$, and we let $\eps:=\log\alpha=1/4$.  For each $\beta>0$, we consider the uniformized hyperbolic filling $(\overline X_\eps,d_\eps,\mu_\beta)$ of $(Z,d,\nu)$ as constructed in Section~\ref{sec:HypFill}. The main result of this section is the following, which relates the validity of $(\theta,p)$-Hardy inequalities on $Z$ to $p$-Hardy inequalities on $\overline X_\eps$. This theorem is proven by combining Proposition~\ref{prop:ZtoX} and Proposition~\ref{prop:XtoZ} below.

In what follows, we denumerate the $\alpha^{-i}$-separated set $A_i$ used in the construction of $(\oXeps,d_\eps,\mu_\beta)$ by 
\[
A_i:=\{z_{i,j}\}_{j\in I_i}.
\]
For ease of notation, we set
\[
I:=\{(i,j):i\in\N\cup\{0\},\,j\in I_i\},
\]
and for each $(i,j)\in I$, we set 
\[
B_{i,j}:=B_\eps((z_{i,j},i),\alpha^{-i}).
\]
By Lemma~\ref{lem:HypFillWhitney}, the collection $\{B_{i,j}\}_{(i,j)\in I}$ covers $X$ and has bounded overlap.  For each $(i,j)\in I$, we denote by $U_{i,j}$ the set 
\[
U_{i,j}:=B_Z(z_{i,j},\alpha^{-i})\cap Z.
\]
Recall that we denote balls in $Z$ taken with respect to the metric $d$ by $B_Z$, while balls in $\oXeps$, taken with respect to $d_\eps$, are denoted by $B_\eps$. For a constant $C>0$ we also denote by $CU_{i,j}$ the set $B_Z(z_{i,j},C\alpha^{-i})\cap Z$.  We now show that given a closed set $E\subset Z$, there exists a subcollection of $\{U_{i,j}\}_{(i,j)\in I}$ which forms a Whitney-type cover of $Z\setminus E$.

\begin{lem}\label{lem:InducedCover}
Let $E\subset Z$ be a closed set.  Then there exists $I_E\subset I$ such that the following hold:
\begin{enumerate}
    \item $Z\setminus E=\bigcup_{(i,j)\in I_E} U_{i,j}$,\label{eq:U-cover}
    \item For each $(i,j)\in I_E$, we have that $6\alpha^{-i}\le d(U_{i,j},E)\le(8\alpha)\alpha^{-i}$,\label{eq:U-distance}
    \item $\sum_{(i,j)\in I_E}\chi_{3U_{i,j}}\le C:=C(\alpha,C_\nu)$.\label{eq:U-overlap} 
\end{enumerate}
\end{lem}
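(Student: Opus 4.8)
The plan is to produce $I_E$ as a Whitney-type decomposition of $Z\setminus E$, but assembled from the already-available covering $\{U_{i,j}\}_{(i,j)\in I}$ of $Z$ at the geometric scales $\alpha^{-i}$ rather than from freely chosen balls. We may assume $Z\setminus E\neq\varnothing$, since otherwise $I_E=\varnothing$ works vacuously. Because $E$ is closed, $d(x,E)>0$ for every $x\in Z\setminus E$, and since $\diam(Z)<1<8$ there is a unique integer $i(x)\ge 0$ with
\[
8\,\alpha^{-i(x)}\le d(x,E)<8\alpha\cdot\alpha^{-i(x)},
\]
existence and uniqueness following from the fact that the numbers $\{8\alpha^{-i}\}_{i\in\Z}$ partition $(0,\infty)$ into consecutive ratio-$\alpha$ intervals. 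Maximal $\alpha^{-i(x)}$-separation of $A_{i(x)}$ forces it to be $\alpha^{-i(x)}$-dense, so I may choose $z_{i(x),\ell}\in A_{i(x)}$ with $d(x,z_{i(x),\ell})<\alpha^{-i(x)}$, i.e.\ $x\in U_{i(x),\ell}$. I then set
\[
I_E:=\{(i,j)\in I:\ i(x)=i\text{ and }x\in U_{i,j}\text{ for some }x\in Z\setminus E\}.
\]

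Next I would verify \eqref{eq:U-distance}. Fix $(i,j)\in I_E$ and a witness $x$ as above. The upper bound is immediate from $x\in U_{i,j}$: $d(U_{i,j},E)\le d(x,E)<8\alpha\cdot\alpha^{-i}$. For the lower bound, every $y\in U_{i,j}$ satisfies $d(x,y)\le d(x,z_{i,j})+d(z_{i,j},y)<2\alpha^{-i}$, so $d(y,E)\ge d(x,E)-d(x,y)>8\alpha^{-i}-2\alpha^{-i}=6\alpha^{-i}$, and taking the infimum over $y\in U_{i,j}$ gives $d(U_{i,j},E)\ge 6\alpha^{-i}$. Property \eqref{eq:U-cover} then follows: $Z\setminus E\subseteq\bigcup_{I_E}U_{i,j}$ is precisely the construction, while $d(U_{i,j},E)\ge 6\alpha^{-i}>0$ forces $U_{i,j}\cap E=\varnothing$ for each $(i,j)\in I_E$, giving the reverse inclusion.

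For the bounded overlap \eqref{eq:U-overlap}, fix $w\in Z$ and suppose $w\in 3U_{i,j}$ for some $(i,j)\in I_E$. Since $z_{i,j}\in U_{i,j}$, \eqref{eq:U-distance} gives $6\alpha^{-i}\le d(z_{i,j},E)\le(1+8\alpha)\alpha^{-i}$, and then $3\alpha^{-i}<d(w,E)<(4+8\alpha)\alpha^{-i}$ by the triangle inequality, since $d(w,z_{i,j})<3\alpha^{-i}$. As distinct integers $i$ give values $\alpha^{-i}$ separated by the factor $\alpha=e^{1/4}$, only boundedly many $i$ (at most $\lfloor 4\log((4+8\alpha)/3)\rfloor+1$) can occur. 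For each fixed such $i$, the centres $z_{i,j}$ with $w\in 3U_{i,j}$ all lie in $B_Z(w,3\alpha^{-i})$ and are pairwise $\alpha^{-i}$-separated; comparing the pairwise disjoint balls $B_Z(z_{i,j},\alpha^{-i}/2)$, all contained in $B_Z(w,4\alpha^{-i})$, through the doubling property of $\nu$ bounds their number by a constant depending only on $C_\nu$. Multiplying the two estimates yields \eqref{eq:U-overlap} with $C=C(\alpha,C_\nu)$. Countability of $I_E$ is automatic, each $A_i$ being finite by compactness of $Z$ and $\alpha^{-i}$-separation.

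The only delicate point---and the reason for the particular constants $6$ and $8\alpha$ in \eqref{eq:U-distance}---is making the two bounds there hold simultaneously. The lower bound costs $2\alpha^{-i}$ through the two triangle-inequality steps from $x$ to an arbitrary $y\in U_{i,j}$ via $z_{i,j}$, so it needs $d(x,E)$ comfortably larger than $\alpha^{-i}$; using instead the cruder estimate $d(U_{i,j},E)\le d(z_{i,j},E)\le d(x,E)+\alpha^{-i}$ for the upper bound would force $d(x,E)$ too close to $\alpha^{-i}$, and the two requirements would be incompatible with $\alpha=e^{1/4}$. Exploiting that $x\in U_{i,j}$ to get $d(U_{i,j},E)\le d(x,E)$ directly removes this tension, and the window $[8\alpha^{-i},8\alpha\cdot\alpha^{-i})$ for $d(x,E)$ is then wide enough to accommodate both. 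The remaining arguments are routine doubling-measure and covering bookkeeping.
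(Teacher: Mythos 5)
Your proof is correct and follows essentially the same route as the paper: you index $I_E$ by which sets $U_{i,j}$ meet the layer $\{8\alpha^{-i}\le d(\cdot,E)<8\alpha\cdot\alpha^{-i}\}$, derive the distance bounds by the same triangle-inequality estimates, and obtain bounded overlap by limiting the admissible levels $i$ for a fixed point and then using $\alpha^{-i}$-separation plus doubling within each level, exactly as in the paper's argument.
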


\begin{proof}
For each $i\in\N\cup\{0\}$, let 
\begin{equation}\label{eq:Neighborhood}
N_i:=\{z\in Z\setminus E:8\alpha^{-i}\le d(z,E)<8\alpha^{-(i-1)}\},
\end{equation}
and let 
\[
I_{i,E}:=\{j\in I_i:U_{i,j}\cap N_i\ne\varnothing\}.
\]
We then define
\[
I_E:=\bigcup_{i=0}^\infty\{(i,j)\in I:j\in I_{i,E}\}.
\]
For $z\in Z\setminus E$, there exists a unique $i\in\N\cup\{0\}$ such that $z\in N_i$.  As $A_i$ is a maximally $\alpha^{-i}$-separated subset of $Z$, there $z_{i,j}\in A_i$ such that $x\in B_Z(z_{i,j},\alpha^{-i})=U_{i,j}$.  Thus, $(i,j)\in I_{i,E}$, and so it follows that $Z\setminus E\subset\bigcup_{(i,j)\in I_E}U_{i,j}$.

For $(i,j)\in I_E$, we have that there exists $y\in U_{i,j}\cap N_i$.  For any $z\in U_{i,j}$, it follows from the triangle inequality that $d(z,E)\ge d(y,E)-d(y,z)\ge 8\alpha^{-i}-2\alpha^{-i}=6\alpha^{-i}$,
and so we have that 
\begin{equation}\label{eq:U dist to E}
d(U_{i,j},E)\ge 6\alpha^{-i}.
\end{equation}
This completes the proof of \eqref{eq:U-cover}, as well as the first inequality of \eqref{eq:U-distance}.
Since $y\in U_{i,j}\cap N_i$, we also have that 
\[
d(U_{i,j},E)\le d(y,E)\le (8\alpha)\alpha^{-i},
\]
which gives us the second inequality in \eqref{eq:U-distance}.

Now, let $(i,j)\in I_E$, and let $z\in 3U_{i,j}$.  Since there exists $y\in U_{i,j}\cap N_i$, it follows from the triangle inequality that \[
d(z,E)\le d(z,y)+d(y,E)<(6+8\alpha)\alpha^{-i}\]
and also that \[
d(z,E)\ge d(y,E)-d(y,z)> 2\alpha^{-i}.\]  Hence we have that 
\[
2\alpha^{-i}<d(3U_{i,j},E)<(6+8\alpha)\alpha^{-i}.
\]
From this and the choice of $\alpha=e^{1/4}$, it follows that if $l\in\N$ is such that $l>7$, then $3U_{i,j}\cap N_{i+l}=\varnothing$, and if $l>5$, then $3U_{i,j}\cap N_{i-l}=\varnothing$. Hence, $3U_{i,j}\subset\bigcup_{l=-5}^{7}N_{i+l}$.
From this, we see that if $i_1,i_2\in \N\cup\{0\}$ are such that $|i_1-i_2|>12$, then 
\begin{equation}\label{eq:12distance}
    \left(\bigcup_{j\in I_{i_1,E}}3U_{i_1,j}\right)\cap\left(\bigcup_{j\in I_{i_2,E}}3U_{i_2,j}\right)=\varnothing.
\end{equation}

Let $\zeta\in Z\setminus E$. There is a unique $i_0\in\N\cup\{0\}$ such that $\zeta\in N_{i_0}$, and so from \eqref{eq:12distance}, it follows that 
\begin{align*}
    \sum_{(i,j)\in I_E}\chi_{3U_{i,j}}(\zeta)=\sum_{i=i_0-12}^{i_0+12}\sum_{j\in I_{i,E}}\chi_{3U_{i,j}}(\zeta).
\end{align*}
Since $\{z_{i,j}\}_{j\in I_{i,E}}$ is an $\alpha^{-i}$-separated set and $\nu$ is doubling, there exists $C:=C(\alpha,C_\nu)$ such that $\sum_{j\in I_{i,E}}\chi_{3U_{i,j}}\le C$.  Therefore, substituting this into the previous expression, we have that 
\[
 \sum_{(i,j)\in I_E}\chi_{3U_{i,j}}(\zeta)\le 24 C,
\]
which completes the proof of \eqref{eq:U-overlap}.\qedhere

\end{proof}

Given $(i,j)\in I$, the structure of the hyperbolic filling also gives us a natural collection of balls with which to chain $B_{i,j}$ to $U_{i,j}$, as we show with the following lemma.

\begin{lem}\label{lem:Chain}
    Let $(i,j)\in I$, and let $z\in U_{i,j}$.  For each $k\in \N\cup\{0\}$, $k\ge i$, there exists $j_k\in I_k$ such that the collection $\{B_k(z):=B_{k,j_k}\}_{k=i}^\infty$ satisfies the following properties for all $k\ge i$:
    \begin{enumerate}
        \item $B_i(z)=B_{i,j}$,\label{eq:Chain 1}
        \item $\rad(B_k(z))=\alpha^{-k}\simeq d_\eps(z,B_k(z))$,\label{eq:Chain 2}
        \item $B_{k+1}(z)\subset 2B_k(z)$ for all $k\ge i$,\label{eq:Chain 3}
        \item $B_k(z)\subset 5B_{i,j}$,\label{eq:Chain 4}
        \item $\mu_\beta(B_k(z))\simeq\alpha^{-k\beta/\eps}\nu(B_\eps(z,\alpha^{-k}))$,\label{eq:Chain 5}
        \item If $u\in\Lip(\oXeps)$, then $Tu(z)=\lim_{k\to\infty}u_{B_k(z)}$.\label{eq:Chain 6}
    \end{enumerate}
    Here $T$ is the trace operator given in Theorem~\ref{thm:HypFill}.
\end{lem}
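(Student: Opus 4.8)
The plan is to construct the chain $\{B_k(z)\}_{k\ge i}$ inductively, starting from $B_i(z)=B_{i,j}$, by following a descending sequence of vertices in the hyperbolic filling that shadow the point $z\in U_{i,j}\subset Z$. Concretely, for each $k\ge i$ I would choose $z_{k,j_k}\in A_k$ so that $z\in B_Z(z_{k,j_k},\alpha^{-k})$; such a vertex exists because $A_k$ is maximally $\alpha^{-k}$-separated in $Z$. For $k=i$ I take $j_i=j$ (this is possible since $z\in U_{i,j}$ means $z\in B_Z(z_{i,j},\alpha^{-i})$), giving \eqref{eq:Chain 1}. Then $B_k(z):=B_{k,j_k}=B_\eps((z_{k,j_k},k),\alpha^{-k})$ has radius exactly $\alpha^{-k}$, and since $z\in B_Z(z_{k,j_k},\alpha^{-k})$ one has, using \eqref{eq:VertexToBoundary} (which gives $d_\eps((z_{k,j_k},k),Z)=\alpha^{-k}/\log\alpha=4\alpha^{-k}$ by our choice $\alpha=e^{1/4}$) and a triangle-inequality estimate on $d_\eps(z,(z_{k,j_k},k))$, that $d_\eps(z,B_k(z))\simeq\alpha^{-k}$; this is \eqref{eq:Chain 2}.

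For \eqref{eq:Chain 3} and \eqref{eq:Chain 4} I would estimate $d_\eps((z_{k,j_k},k),(z_{k+1,j_{k+1}},k+1))$ and $d_\eps((z_{k,j_k},k),(z_{i,j},i))$. Both $z_{k,j_k}$ and $z_{k+1,j_{k+1}}$ lie within distance $\simeq\alpha^{-k}$ of $z$ in $Z$, and consecutive levels of the filling are joined by edges of $d_\eps$-length comparable to $\alpha^{-k}$, so the two vertices are $d_\eps$-close on the scale $\alpha^{-k}$; combined with $\rad(B_k(z))=\alpha^{-k}$ this yields $B_{k+1}(z)\subset 2B_k(z)$. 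Iterating (or directly: every $z_{k,j_k}$ is within $d$-distance $\lesssim\alpha^{-i}$ of $z_{i,j}$, and the corresponding vertices are within $d_\eps$-distance $\lesssim\alpha^{-i}$, while $\alpha^{-k}\le\alpha^{-i}$) gives $B_k(z)\subset 5B_{i,j}$, with the constant $5$ obtained by carefully summing the geometric series $\sum_{m\ge i}\alpha^{-m}$ and using $\alpha=e^{1/4}$; this is where I would have to be slightly careful with the numerical constant, possibly invoking \cite[Proposition~4.4]{BBS} to relate $d_Z$ and $d_\eps$ on vertices. Property \eqref{eq:Chain 5} follows from Theorem~\ref{thm:HypFill}(4): since $B_k(z)$ is centered at a vertex at level $k$ and has radius $\alpha^{-k}$, it is comparable to $B_\eps(z,\alpha^{-k})$, and $\nu(B_\eps(z,\alpha^{-k})\cap Z)\simeq\mu_\beta(B_\eps(z,\alpha^{-k}))/\alpha^{-k\beta/\eps}$ rearranges to the claimed estimate (using also that $\mu_\beta(B_k(z))\simeq\mu_\beta(B_\eps(z,\alpha^{-k}))$ by the doubling property from Theorem~\ref{thm:HypFill}(3)).

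Finally, for \eqref{eq:Chain 6}, I would use the explicit formula $Tu(z)=\lim_{r\to0^+}\fint_{B_\eps(z,r)}u\,d\mu_\beta$ from Theorem~\ref{thm:HypFill} together with \eqref{eq:Chain 2}, which says $B_k(z)$ is a ball of radius $\simeq\alpha^{-k}$ at $d_\eps$-distance $\simeq\alpha^{-k}$ from $z$; hence $B_k(z)\subset B_\eps(z,C\alpha^{-k})$ with $B_\eps(z,C\alpha^{-k})$ of comparable $\mu_\beta$-measure (doubling), so by the Lebesgue differentiation theorem for the doubling measure $\mu_\beta$ the averages $u_{B_k(z)}$ converge to $Tu(z)$ as $k\to\infty$ — at least at $\mu_\beta$-a.e. $z$, and for $u$ Lipschitz (hence continuous on $\oXeps$) in fact at every $z\in Z$ since then $u_{B_k(z)}\to u(z)=Tu(z)$ directly by continuity and shrinking diameters. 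The main obstacle I anticipate is bookkeeping the explicit geometric constants in \eqref{eq:Chain 3} and \eqref{eq:Chain 4} (ensuring the dilation factors $2$ and $5$ actually work with $\alpha=e^{1/4}$, $\tau=2$), rather than any conceptual difficulty; everything else is a routine consequence of the structure of the uniformized hyperbolic filling and the properties collected in Theorem~\ref{thm:HypFill} and Lemma~\ref{lem:HypFillWhitney}.
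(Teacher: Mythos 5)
Your proposal matches the paper's proof essentially step for step: the same choice of $j_k$ with $d(z,z_{k,j_k})<\alpha^{-k}$ (and $j_i=j$), the same use of \eqref{eq:VertexToBoundary} and the bi-Lipschitz comparison of $d$ and $d_\eps$ for \eqref{eq:Chain 2}, edge-length estimates for \eqref{eq:Chain 3}--\eqref{eq:Chain 4}, codimensionality plus doubling of $\mu_\beta$ for \eqref{eq:Chain 5}, and continuity of the Lipschitz function $u$ (rather than a.e.\ differentiation) for \eqref{eq:Chain 6}, which is exactly what the paper does. The one spot needing the care you already flag is \eqref{eq:Chain 4}: summing the crude per-edge bounds $\alpha^{-m}$ overshoots $5\alpha^{-i}$, whereas summing the exact uniformized edge lengths $\int_m^{m+1}e^{-\eps t}\,dt=4(1-1/\alpha)\alpha^{-m}$ gives $d_\eps(v_i,v_k)<4\alpha^{-i}$, and adding the radius $\alpha^{-k}\le\alpha^{-i}$ yields $B_k(z)\subset 5B_{i,j}$, exactly as in the paper.
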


\begin{proof}
    Set $j_i:= j$, and for all $k\in\N$, $k>i$, choose $j_k\in I_k$ such that $d(z,z_{k,j_k})<\alpha^{-k}$.  Such a $j_k$ exists, as $A_k$ is a maximally $\alpha^{-k}$-separated set.  Setting $v_k:=(z_{k,j_k},k)\in V$, it follows from the definition of the neighborhood relationship between vertices of $X$, given in Section~\ref{sec:HypFill}, that $v_k\sim v_{k+1}$ for all $k\ge i$. 

    For each $k\ge i$, set
    \[    B_k(z):=B_{k,j_k}=B_\eps(z_{k,j_k},\alpha^{-k}).
    \]
    By \eqref{eq:VertexToBoundary}, it follows that
    \[
    d_\eps(z,B_k(z))\ge d_\eps(B_k(z),Z)\ge 3\alpha^{-k}.
    \]
    Likewise, by the bi-Lipschitz equivalence of $d$ and $d_\eps$, as well as \eqref{eq:VertexToBoundary}, we have
    \[
    d_\eps(z, B_k(z))\le d_\eps(z,z_{k,j_k})+d_\eps(z_{k,j_k},v_k)\simeq d(z,z_{k,j_k})+\alpha^{-k}\lesssim\alpha^{-k}.
    \]
    Thus, \eqref{eq:Chain 1} and \eqref{eq:Chain 2} hold.

    Since $v_k\sim v_{k+1}$, it follows from the definition of $d_\eps$ and choice of $\alpha=e^{1/4}$ that 
    \[
    d_\eps(v_k,v_{k+1})\le\ell_\eps([v_k,v_{k+1}])=\int_{k}^{k+1}e^{-\eps t}\,dt=4(1-1/\alpha)\alpha^{-k}<\alpha^{-k}.
    \]
    Thus, $v_{k+1}\in B_k(z)$, and so we have have that $B_{k+1}(z)\subset 2B_k(z)$, proving \eqref{eq:Chain 3}.

    If $x\in B_k(z)$, then by \eqref{eq:VertexToBoundary}, we have that 
    \[
    d_\eps(v_i,x)\le d_\eps(v_i,v_k)+d_\eps(v_k,x)<4\alpha^{-i}+\alpha^{-k}\le 5\alpha^{-i},
    \]
    and so we have that $B_k(z)\subset B_{i,j}$, proving \eqref{eq:Chain 4}.

    To prove \eqref{eq:Chain 5}, we note that by the bi-Lipschitz equivalence of $d$ and $d_\eps$, there is a constant $C:=C(\alpha,\tau)\ge 1$ such that $d_\eps(z,z_{k,j_k})<C\alpha^{-k}$. From \eqref{eq:VertexToBoundary} and our choice of $\alpha=e^{1/4}$, it then follows that 
    \[
    B_k(z)\subset B_\eps(z_{k,j_k},5\alpha^{-k})\subset B_\eps(z, (5+C)\alpha^{-k}).
    \]
    Since $\mu_\beta$ is doubling, we then have that 
    \[
    \mu_\beta(B_k(z))\le \mu_\beta(B_\eps(z,(5+C)\alpha^{-k})\simeq\mu_\beta(B_\eps(z,\alpha^{-k})).
    \]
    Likewise, by \eqref{eq:VertexToBoundary}, we have that $B_\eps(z,\alpha^{-k})\subset B(z_{k,j_k},(1+C)\alpha^{-k})\subset(5+C)B_k(z)$, 
    and so again by the doubling property of $\mu_\beta$, we obtain
    \[
    \mu_\beta(B_\eps(z,\alpha^{-k}))\le\mu_\beta((5+C)B_k(z))\simeq\mu_\beta(B_k(z)).
    \]
    From these two inequalities and the $\beta/\eps$-codimensional relationship between $\mu_\beta$ and $\nu$ given by Theorem~\ref{thm:HypFill}, we have that 
    \[
    \mu_\beta(B_k(z))\simeq\alpha^{-k\beta/\eps}\nu(B_\eps(z,\alpha^{-k})),
    \]
    proving \eqref{eq:Chain 5}.

    Finally, if $u\in\Lip(\oXeps)$, then $Tu$ is given by the restriction of $u$ to $Z$, and is thus also Lipschitz.  Moreover, for each $x\in B_k(z)$, it follows that \[
    d_\eps(x,z)\le d_\eps(x,v_k)+d_\eps(v_k,z_{k,j_k})+d_\eps(z_{k,j_k},z)<6C\alpha^{-k}.
    \]
    Hence, 
    \[
    |Tu(z)-u_{B_k(z)}|\le\fint_{B_k(z)}|u(z)-u(x)|d\mu_\beta(x)\lesssim\alpha^{-k}\to 0
    \]
    as $k\to\infty$.  This proves \eqref{eq:Chain 6}.    
\end{proof}

We now show that a $(\theta,p)$-Hardy inequality on $Z$ implies a $p$-Hardy inequality with respect to $\mu_\beta$ on $\oXeps$, for an appropriate choice of $\theta$, $p$, and $\beta$.

\begin{prop}\label{prop:ZtoX}
Let $E\subset Z$ be a closed set, let $1<p<\infty$, $0<\theta<1$, and $\beta>0$ be such that $\beta/\eps=p(1-\theta)$, and suppose that $Z\setminus E$ satisfies a  $(\theta,p)$-Hardy inequality.  Then $\oXeps\setminus E$ satisfies a $p$-Hardy inequality with respect to $\mu_\beta$, with constant depending only on $\theta$, $p$, $C_\nu$, and $C_{\theta,p}$.   
\end{prop}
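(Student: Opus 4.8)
The plan is to take a function $u\in\Lip_c(\oXeps\setminus E)$ and produce from its trace a test function for the $(\theta,p)$-Hardy inequality on $Z\setminus E$, thereby converting the Besov energy of the trace into the Dirichlet energy of $u$, and to control the weighted Hardy-quotient on $\oXeps$ by the analogous quotient on $Z$. First I would observe that since $u$ is compactly supported in the open set $\oXeps\setminus E$, its trace $Tu$ (equivalently, the restriction $u|_Z$, which is Lipschitz by Lemma~\ref{lem:Chain}\eqref{eq:Chain 6}) is supported away from $E$, so $Tu\in\Lip_c(Z\setminus E)$ and is an admissible test function. Applying the $(\theta,p)$-Hardy inequality to $Tu$ and using the trace bound from Theorem~\ref{thm:HypFill}(5) gives
\begin{equation*}
\int_{Z\setminus E}\frac{|Tu(z)|^p}{d(z,E)^{\theta p}}\,d\nu(z)\le C_{\theta,p}\|Tu\|^p_{B^\theta_{p,p}(Z,\nu)}\lesssim C_{\theta,p}\int_{\oXeps}g_u^p\,d\mu_\beta.
\end{equation*}
So the right-hand side of the desired $p$-Hardy inequality on $\oXeps$ already dominates the left-hand integral over $Z$; the remaining task is to bound the full integral $\int_{\oXeps\setminus E}|u|^p d_{\oXeps\setminus E}^{-p}\,d\mu_\beta$ by a constant times that same quantity.

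The key step is a Whitney-scale comparison. I would use the induced Whitney cover $\{U_{i,j}\}_{(i,j)\in I_E}$ of $Z\setminus E$ from Lemma~\ref{lem:InducedCover} together with the associated vertex balls $B_{i,j}$ in $\oXeps$ and the chains $\{B_k(z)\}_{k\ge i}$ from Lemma~\ref{lem:Chain}. The geometry of the filling (Lemma~\ref{lem:HypFillWhitney} and \eqref{eq:VertexToBoundary}) shows that the region $\oXeps\setminus E$ decomposes, up to bounded overlap, into the vertex balls $B_{i,j}$ with $(i,j)\in I_E$, with $d_{\oXeps\setminus E}\simeq\alpha^{-i}$ on $B_{i,j}$ and $d(U_{i,j},E)\simeq\alpha^{-i}$; thus
\begin{equation*}
\int_{\oXeps\setminus E}\frac{|u|^p}{d_{\oXeps\setminus E}^p}\,d\mu_\beta\lesssim\sum_{(i,j)\in I_E}\alpha^{ip}\int_{B_{i,j}}|u|^p\,d\mu_\beta.
\end{equation*}
On a single ball $B_{i,j}$, I would split $|u|\le|u-u_{B_{i,j}}|+|u_{B_{i,j}}|$. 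The oscillation term $\alpha^{ip}\int_{B_{i,j}}|u-u_{B_{i,j}}|^p\,d\mu_\beta$ is handled by the $(1,1)$-Poincar\'e inequality on $(\oXeps,d_\eps,\mu_\beta)$ (Theorem~\ref{thm:HypFill}(3)), giving $\lesssim\int_{\lambda B_{i,j}}g_u^p\,d\mu_\beta$ (after using $L^p$ Poincar\'e, which follows from $(1,1)$ by a standard Sobolev-type self-improvement), and summing over $(i,j)\in I_E$ with bounded overlap yields $\lesssim\int_{\oXeps}g_u^p\,d\mu_\beta$. For the average term I would telescope along the chain: by Lemma~\ref{lem:Chain}\eqref{eq:Chain 6}, $u_{B_{i,j}}=u_{B_i(z)}$ and $Tu(z)=\lim_k u_{B_k(z)}$ for $z\in U_{i,j}$, so
\begin{equation*}
|u_{B_{i,j}}-Tu(z)|\le\sum_{k=i}^\infty|u_{B_k(z)}-u_{B_{k+1}(z)}|\lesssim\sum_{k=i}^\infty\alpha^{-k}\left(\fint_{2B_k(z)}g_u^p\,d\mu_\beta\right)^{1/p},
\end{equation*}
again via the Poincar\'e inequality on the overlapping pair $B_k(z),B_{k+1}(z)\subset 2B_k(z)$. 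Averaging this over $z\in U_{i,j}$, using \eqref{eq:Chain 5} and Theorem~\ref{thm:HypFill}(4) to convert $\mu_\beta$-averages on $B_k(z)$ into $\nu$-averages on $Z$-balls scaled by $\alpha^{-k\beta/\eps}$, and then using $\beta/\eps=p(1-\theta)$ to identify the resulting weight as (a discretization of) the Besov energy integrand, I would obtain
\begin{equation*}
\alpha^{ip}\int_{B_{i,j}}|u_{B_{i,j}}|^p\,d\mu_\beta\lesssim\frac{\mu_\beta(B_{i,j})\alpha^{ip}}{\nu(U_{i,j})}\int_{U_{i,j}}|Tu(z)|^p\,d\nu(z)+(\text{error}),
\end{equation*}
where $\mu_\beta(B_{i,j})\alpha^{ip}/\nu(U_{i,j})\simeq\alpha^{i\theta p}\simeq d(U_{i,j},E)^{-\theta p}$, so the main term sums (with bounded overlap) to $\lesssim\int_{Z\setminus E}|Tu|^p d(\cdot,E)^{-\theta p}\,d\nu$, which we already bounded; and the error terms are of the same Poincar\'e/telescoping type, summing to $\lesssim\int_{\oXeps}g_u^p\,d\mu_\beta$.

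The main obstacle I anticipate is the careful bookkeeping in this last telescoping estimate: one must pass from the tail sum $\sum_{k\ge i}\alpha^{-k}(\fint_{2B_k(z)}g_u^p)^{1/p}$ through an $\ell^p$-type (Hölder or Minkowski) argument to a sum of integrals, convert $\mu_\beta$-normalizations to $\nu$-normalizations via \eqref{eq:Chain 5} and Theorem~\ref{thm:HypFill}(4), and then verify that reindexing the double sum $\sum_{(i,j)\in I_E}\sum_{k\ge i}$ against the bounded-overlap covers on both $Z$ and $\oXeps$ produces exactly the Dirichlet energy on $\oXeps$ with a constant depending only on $\theta$, $p$, $C_\nu$ and the filling parameters; the exponent relation $\beta/\eps=p(1-\theta)$ must be used precisely where the powers of $\alpha^{-k}$ are collected so that the Besov integrand $d(x,y)^{-\theta p}\nu(B(x,d(x,y)))^{-1}$ emerges. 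A secondary technical point is ensuring the support condition is genuinely preserved — that $u$ compactly supported in $\oXeps\setminus E$ forces $Tu$ to vanish on a neighborhood of $E$ in $Z$ — which follows since $\dist_\eps(\operatorname{supp}u,E)>0$ and $Z\subset\oXeps$, but should be stated explicitly.
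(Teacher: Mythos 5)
Your overall architecture (take the trace, note $Tu\in\Lip_c(Z\setminus E)$, apply the $(\theta,p)$-Hardy inequality plus the trace bound of Theorem~\ref{thm:HypFill}, and handle oscillations by Poincar\'e and chaining along Lemma~\ref{lem:Chain}) is the same as the paper's, but the key decomposition step contains a genuine error. You claim that $\oXeps\setminus E$ decomposes, up to bounded overlap, into the balls $B_{i,j}$ with $(i,j)\in I_E$, with $d_\eps(\cdot,E)\simeq\alpha^{-i}$ on each such ball. The family $\{B_{i,j}\}_{(i,j)\in I_E}$ does \emph{not} cover $\oXeps\setminus E$: $I_E$ is a Whitney decomposition of the boundary set $Z\setminus E$, and the corresponding balls form only a thin ``Whitney layer'' in the filling, namely those vertex balls whose height above $Z$ is comparable to their distance to $E$. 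They miss the entire top of the filling (no coarse scale $i$ with $8\alpha^{-i}>\sup_{z}d(z,E)$ belongs to $I_E$, so the root vertex region is uncovered) and, more importantly, all points whose height $\alpha^{-k}$ is much smaller than their distance to $E$ (each $B_{i',j'}$ with $(i',j')\in I_E$ only contains points at heights between $3\alpha^{-i'}$ and $5\alpha^{-i'}$, and membership in $I_E$ forces $\alpha^{-i'}\simeq d(U_{i',j'},E)$). Since $u$ is only required to vanish near $E$, it is in general nonzero on these uncovered regions, so your first displayed inequality in the ``key step'' is false and everything built on it collapses.

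The natural repair --- covering with the full family $\{B_{i,j}\}_{(i,j)\in I}$ of Lemma~\ref{lem:HypFillWhitney} and using $d_\eps(x,E)\ge d_\eps(x,Z)\gtrsim\alpha^{-i}$ --- works for the oscillation terms (which carry gradient integrals and sum by bounded overlap), but \emph{fails} for the trace-average term: you would get, after the codimension conversion $\mu_\beta(B_{i,j})\alpha^{ip}/\nu(U_{i,j})\simeq\alpha^{i\theta p}$, a sum of the form $\sum_{i}\alpha^{i\theta p}\int_{U_{i,j}}|Tu|^p\,d\nu$ over \emph{all} scales $i$, which diverges pointwise since $\alpha^{i\theta p}\to\infty$; your identification $\alpha^{i\theta p}\simeq d(U_{i,j},E)^{-\theta p}$ is only valid on $I_E$. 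This is exactly where the paper's proof does its real work: it splits $I$ into $I_1$ (balls lying below some Whitney ball $U_{i,j}$, $(i,j)\in I_E$, i.e.\ height finer than the distance to $E$), where one must use $d_\eps(x,E)\simeq d(U_{k,l},E)$ rather than the height and then sum a geometric series $\sum_{k>i}\alpha^{-k\beta/\eps}\simeq\alpha^{-i\beta/\eps}$ to produce the weight $d(z,E)^{-\theta p}$, and $I_2$ (the rest), where one checks $d(U_{k,l},E)<8\alpha^{-k}$ so that the admissible scales at a point $z$ are bounded above by $m(z)\simeq-\log_\alpha d(z,E)$ and $\sum_{k\le m(z)}\alpha^{k\theta p}\simeq d(z,E)^{-\theta p}$ converges. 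Without this splitting (or an equivalent mechanism) the reduction of the trace-average term to $\int_{Z\setminus E}|Tu|^p d(\cdot,E)^{-\theta p}\,d\nu$ does not go through, so the proposal has a real gap at its central step. (A secondary, fixable point: in the chaining estimate the plain factor $\alpha^{-k}$ must be split as $\alpha^{-k\delta}\alpha^{-k(1-\delta)}$ with $(1-\delta)p>\beta/\eps$, or one must work with an exponent $q<p$ and the maximal function as the paper does; otherwise the resulting series only converges when $\theta>1/2$.)
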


\begin{proof}
Let $u\in\Lip_c(\oXeps\setminus E)$, and for ease of notation, let $\mu:=\mu_\beta$.  For $(i,j)\in I$, let $B_{i,j}$ and $U_{i,j}$ be as defined above.  
 From Lemma~\ref{lem:HypFillWhitney}, we then have that 
\begin{align}\label{eq:512-3}
    \int_{\oXeps\setminus E}\frac{|u(x)|^p}{d_\eps(x,E)^p}d\mu(x)&\le\sum_{(i,j)\in I}\int_{B_{i,j}}\frac{|u(x)|^p}{d_\eps(x,E)^p}d\mu(x)\nonumber\\
    &\lesssim\sum_{(i,j)\in I}|(Tu)_{U_{i,j}}|^p\int_{B_{i,j}}\frac{d\mu(x)}{d_\eps(x,E)^p}+\sum_{(i,j)\in I}\int_{B_{i,j}}\frac{|u(x)-(Tu)_{U_{i,j}}|^p}{d_\eps(x,E)^p}d\mu(x),
\end{align}
where $T$ is the trace operator given by Theorem~\ref{thm:HypFill}.

To estimate the first term of \eqref{eq:512-3}, we decompose the collection $\{B_{i,j}\}_{(i,j)\in I}$ in the following manner.  Let $I_E\subset I$ be the indexing set given by Lemma~\ref{lem:InducedCover}, and for each $(i,j)\in I_E$, let \[
I_{i,j}:=\{(k,l)\in I:k>i,\, l\in I_{k},\,U_{k,l}\cap U_{i,j}\ne\varnothing\}.\]
Let $I_1:=\bigcup_{(i,j)\in I_E}I_{i,j}$, and let $I_2:=I\setminus I_1$.

For $(k,l)\in I_1$, we have from the bi-Lipschitz equivalence of $d$ and $d_\eps$, along with \eqref{eq:VertexToBoundary} that
\begin{align}\label{eq:bi-Lip 1}
    d(U_{k,l},E)&\simeq d_\eps(U_{k,l},E)\\
    &\le d_\eps(U_{k,l},B_{k,l})+d_\eps(B_{k,l},E)\lesssim d_\eps(B_{k,l},Z)+d_\eps(B_{k,l},E)\nonumber\lesssim d_\eps(B_{k,l},E).
\end{align}
Here the comparison constants depend only the bi-Lipschitz constants between $d$ and $d_\eps$, which depend only on $\alpha$ and $\tau$, see \cite[Proposition~4.4]{BBS}.  Moreover, if $(k,l)\in I_1$, then there exists $(i,j)\in I_E$ such that $(k,l)\in I_{i,j}$, and so there exists $y\in U_{k,l}\cap U_{i,j}$.  Thus, for $z\in U_{k,l}$, it follows from \eqref{eq:U dist to E} and Lemma~\ref{lem:InducedCover} \eqref{eq:U-distance} that 
\begin{align}\label{eq:d(z,E)}
    d(z,E)\ge d(U_{i,j},E)-d(y,z)\ge 6\alpha^{-i}-2\alpha^{-k}\ge 4\alpha^{-k}\ge (2\alpha)^{-1}d(U_{k,l},E),
\end{align}
where we have used the fact that $k>i$, since $(k,l)\in I_1$. Using \eqref{eq:bi-Lip 1}, the $\beta/\eps$-codimensional relationship between $\mu$ and $\nu$ given by Theorem~\ref{thm:HypFill}, as well as \eqref{eq:d(z,E)}, we then obtain 
    \begin{align*}
    \sum_{(k,l)\in I_1}|(Tu)_{U_{k,l}}|^p\int_{B_{k,l}}\frac{d\mu(x)}{d_\eps(x,E)^p}&\le\sum_{(i,j)\in I_E}\sum_{(k,l)\in I_{i,j}}|(Tu)_{U_{k,l}}|^p\int_{B_{k,l}}\frac{d\mu(x)}{d_\eps(x,E)^p}\\
    &\lesssim\sum_{(i,j)\in I_E}\sum_{(k,l)\in I_{i,j}}\frac{\mu(B_{k,l})}{\nu(U_{k,l})}\int_{U_{k,l}}\frac{|Tu(z)|^p}{d(U_{k,l},E)^p}d\nu(z)\\
    &\lesssim\sum_{(i,j)\in I_E}\sum_{(k,l)\in I_{i,j}}\alpha^{-k\beta/\eps}\int_{U_{k,l}}\frac{|Tu(z)|^p}{d(U_{k,l},E)^p}d\nu(z)\\
    &\lesssim\sum_{(i,j)\in I_E}\sum_{(k,l)\in I_{i,j}}\alpha^{-k\beta/\eps}\int_{U_{k,l}}\frac{|Tu(z)|^p}{d(z,E)^p}d\nu(z).
    \end{align*}

We note that for each $k$, the collection $\{U_{k,l}\}_{l\in I_k}$ has bounded overlap since $\nu$ is doubling and $\{z_{k,l}\}_{l\in I_k}$ is $\alpha^{-k}$-separated.  Furthermore, by our choice of $\alpha$, if $(k,l)\in I_{i,j}$, then $U_{k,l}\subset 3U_{i,j}$. From Lemma~\ref{lem:InducedCover}, it also follows that if $(i,j)\in I_E$ and $z\in 3U_{i,j}$, then $d(z,E)\ge 4\alpha^{-i}$. From these facts, as well as the bounded overlap of $\{3U_{i,j}\}_{(i,j)\in I_E}$, also given by Lemma~\ref{lem:InducedCover}, we have that
    \begin{align*}
    \sum_{(i,j)\in I_E}\sum_{(k,l)\in I_{i,j}}\alpha^{-k\beta/\eps}\int_{U_{k,l}}\frac{|Tu(z)|^p}{d(z,E)^p}d\nu(z)&=\sum_{(i,j)\in I_E}\sum_{k=i+1}^\infty\sum_{\substack{l\,s.t.\,\\(k,l)\in I_{i,j}}}\alpha^{-k\beta/\eps}\int_{U_{k,l}}\frac{|Tu(z)|^p}{d(z,E)^p}d\nu(z)\\
    &\lesssim\sum_{(i,j)\in I_E}\sum_{k=i+1}^\infty\alpha^{-k\beta/\eps}\int_{3U_{i,j}}\frac{|Tu(z)|^p}{d(z,E)^p}d\nu(z)\\
    &\simeq\sum_{(i,j)\in I_E}\alpha^{-i\beta/\eps}\int_{3U_{i,j}}\frac{|Tu(z)|^p}{d(z,E)^p}d\nu(z)\\
    &\lesssim\sum_{(i,j)\in I_E}\int_{3U_{i,j}}\frac{|Tu(z)|^p}{d(z,E)^{\theta p}}d\nu(z)\lesssim\int_{Z\setminus E}\frac{|Tu(z)|^p}{d(z,E)^{\theta p}}d\nu(z).
    \end{align*}
Note that we have used the assumption that $\beta/\eps=p(1-\theta)$ to obtain the second to last inequality.  Since $u\in \Lip_c(\oXeps\setminus E)$, it follows that $Tu\in \Lip_c(Z\setminus E)$.  Since $Z\setminus E$ satisfies a $(\theta,p)$-Hardy inequality, and by boundedness of the trace operator given by Theorem~\ref{thm:HypFill}, we therefore obtain
    \begin{align}\label{eq:512-4}
    \sum_{(k,l)\in I_1}|(Tu)_{U_{k,l}}|^p\int_{B_{k,l}}\frac{d\mu(x)}{d(x,E)^p}\lesssim\int_Z\int_Z\frac{|Tu(z)-Tu(w)|^p}{d(z,w)^{\theta p}\nu(B(z,d(z,w)))}d\nu(w)d\nu(z)\lesssim\int_{\oXeps}g_u^pd\mu.
    \end{align}

For $(k,l)\in I_2$, we have that $d_\eps(B_{k,l}, E)\ge d_\eps(B_{k,l},Z)\ge 3\alpha^{-k}$ by \eqref{eq:VertexToBoundary}, and so it follows that  
    \begin{align*}
    \sum_{(k,l)\in I_2}|(Tu)_{U_{k,l}}|^p\int_{B_{k,l}}\frac{d\mu(x)}{d_\eps(x,E)^p}&\lesssim\sum_{(k,l)\in I_2}\frac{\mu(B_{k,l})}{\nu(U_{k,l})}\int_{U_{k,l}}\frac{|Tu(z)|^p}{\alpha^{-kp}}d\nu(z)\\
    &\lesssim\sum_{(k,l)\in I_2}\int_{U_{k,l}}\frac{|Tu(z)|^p}{\alpha^{-k\theta p}}d\nu(z)=\sum_{(k,l)\in I_2}\int_{Z\setminus E}\frac{|Tu(z)|^p\chi_{U_{k,l}}(z)}{\alpha^{-k\theta p}}d\nu(z).
    \end{align*}
Here we have again used the $\beta/\eps$-codimensional relationship between $\mu$ and $\nu$ as well as the fact that $\beta/\eps=p(1-\theta)$.  

For $(k,l)\in I_2$, we have that $(k,l)\not\in I_E$, and so by the definition of $I_E$ given in the proof of Lemma~\ref{lem:InducedCover}, it follows that $U_{k,l}\cap N_k=\varnothing$, where $N_k$ is given by \eqref{eq:Neighborhood}.  Hence, either $d(U_{k,l},E)<8\alpha^{-k}$ or $d(U_{k,l},E)\ge 8\alpha^{-(k-1)}$.  If $d(U_{k,l}, E)\ge 8\alpha^{-(k-1)}$, then there exists $(i,j)\in I_E$ with $i>k$ such that $U_{k,l}\cap U_{i,j}\ne\varnothing$, and so $(k,l)\in I_1$.  However, this is a contradiction by the definition of $I_2$, and so it follows that $d(U_{k,l},E)<8\alpha^{-k}$.  Thus, for all $z\in U_{k,l}$, we have that
\begin{equation*}\label{eq:I2 distance}
    d(z,E)<10\alpha^{-k},
\end{equation*}
from which it follows that
\[
k\le -\log_{\alpha}(d(z,E)/10)=:m(z).
\]
Therefore, by Tonelli's theorem and bounded overlap of $\{U_{k,l}\}_{l\in I_k}$, we obtain
\begin{align*}
    \sum_{(k,l)\in I_2}\int_{Z\setminus E}\frac{|Tu(z)|^p\chi_{U_{k,l}}(z)}{\alpha^{-k\theta p}}&d\nu(z)=\int_{Z\setminus E}|Tu(z)|^p\sum_{\substack{(k,l)\in I_2\\\st z\in U_{k,l}}}\frac{1}{\alpha^{-k\theta p}}d\nu(z)\\
    &\le\int_{Z\setminus E}|Tu(z)|^p\sum_{k=-\infty}^{\lceil m(z)\rceil}\sum_{\substack{l\in I_k\\\st z\in U_{k,l}}}\frac{1}{\alpha^{-k\theta p}}d\nu(z)\\
    &\lesssim\int_{Z\setminus E}|Tu(z)|^p\sum_{k=-\infty}^{\lceil m(z)\rceil}\alpha^{k\theta p}d\nu(z)\\
    &\simeq\int_{Z\setminus E}|Tu(z)|^p\alpha^{m(z)\theta p}d\nu(z)\simeq\int_{Z\setminus E}\frac{|Tu(z)|^p}{d(z,E)^{\theta p}}d\nu(z)\lesssim\int_{\oXeps}g_u^pd\mu,
\end{align*}
where the last inequality follows as $Z\setminus E$ satisfies a $(\theta,p)$-Hardy inequality and by boundedness of the trace operator, given by Theorem~\ref{thm:HypFill}.  Combining this with \eqref{eq:512-4}, we obtain the following estimate for the first term of \eqref{eq:512-3}:
\begin{equation}\label{eq:512-5}
    \sum_{(i,j)\in I}|(Tu)_{U_{i,j}}|^p\int_{B_{i,j}}\frac{d\mu(x)}{d_\eps(x,E)^p}\lesssim\int_{\oXeps}g_u^pd\mu.
\end{equation}
Here, the comparison constant depends only on $p$, $\theta$, $\alpha$, $\tau$, $\diam(Z)$, $C_\nu$, and $C_{\theta,p}$. 

We now estimate the second term of \eqref{eq:512-3}.  We have that 
\begin{align}\label{eq:512-6}
\sum_{(i,j)\in I}\int_{B_{i,j}}&\frac{|u(x)-(Tu)_{U_{i,j}}|^p}{d_\eps(x,E)^p}d\mu(x)\nonumber\\
&\lesssim\sum_{(i,j)\in I}\int_{B_{i,j}}\frac{|u(x)-u_{B_{i,j}}|^p}{d_\eps(x,E)^p}d\mu(x)+\sum_{(i,j)\in I}|u_{B_{i,j}}-(Tu)_{U_{i,j}}|^p\int_{B_{i,j}}\frac{d\mu(x)}{d_\eps(x,E)^p}.    
\end{align}
As $(\oXeps,d_\eps,\mu_\beta)$ is doubling and supports a $(1,1)$-Poincar\'e inequality, it also supports a $(p,p)$-Poincar\'e inequality, see \cite{HaKo}, and also \cite[Theorem~9.1.2]{HKST} for example.  Using this fact along with \eqref{eq:VertexToBoundary}, we estimate the first term on the right-hand side of \eqref{eq:512-6} by
\begin{align}\label{eq:414}
    \sum_{(i,j)\in I}\int_{B_{i,j}}\frac{|u(x)-u_{B_{i,j}}|^p}{d_\eps(x,E)^p}d\mu(x)&\lesssim\sum_{(i,j)\in I}\frac{\mu(B_{i,j})}{\alpha^{-ip}}\fint_{B_{i,j}}|u-u_{B_{i,j}}|^pd\mu\lesssim\sum_{(i,j)\in I}\int_{B_{i,j}}g_{u}^pd\mu\lesssim\int_{\oXeps} g_u^pd\mu.
\end{align}
We note that $(\oXeps,d_\eps)$ is a geodesic space, and so the $(p,p)$-Poincare inequality has scaling factor $\lambda=1$, see \cite{HaKo}.  Thus, we obtain the last inequality by bounded overlap of $\{B_{i,j}\}_{(i,j)\in I}$, see Lemma~\ref{lem:HypFillWhitney}.

To estimate the second term on the right-hand side of \eqref{eq:512-6}, we have from \eqref{eq:VertexToBoundary} that 
\begin{align}\label{eq:512-7}
    \sum_{(i,j)\in I}|u_{B_{i,j}}-(Tu)_{U_{i,j}}|^p\int_{B_{i,j}}\frac{d\mu(x)}{d_\eps(x,E)^p} \lesssim\sum_{(i,j)\in I}\frac{\mu(B_{i,j})}{\alpha^{-ip}}\left(\fint_{U_{i,j}}|Tu(z)-u_{B_{i,j}}|d\nu(z)\right)^p.
\end{align}
For each $z\in U_{i,j}$, consider the chain of balls $\{B_k(z)\}_{k=i}^\infty$ given by Lemma~\ref{lem:Chain}. 
Since $\beta/\eps=p(1-\theta)$, we can choose $q>1$ and $\delta>0$ such that $\beta/\eps<q<p$ and $q(1-\delta)>\beta/\eps$. Using Lemma~\ref{lem:Chain} Claims \eqref{eq:Chain 1}, \eqref{eq:Chain 6}, and \eqref{eq:Chain 3}, and applying the $(1,q)$-Poincar\'e inequality and H\"older's inequality twice, we obtain
\begin{align*}
    \fint_{U_{i,j}}|Tu(z)-u_{B_{i,j}}|d\nu(z)&\le\fint_{U_{i,j}}\sum_{k=i}^\infty|u_{B_k(z)}-u_{B_{k+1}(z)}|d\nu(z)\\
    &\lesssim\fint_{U_{i,j}}\sum_{k=i}^\infty\alpha^{-k\delta-k(1-\delta)}\left(\fint_{2B_k(z)}g_u^q\, d\mu\right)^{1/q} d\nu(z)\\
    &\lesssim\alpha^{-i\delta}\fint_{U_{i,j}}\left(\sum_{k=i}^\infty\alpha^{-k(1-\delta)q}\fint_{2B_k(z)}g_u^qd\mu\right)^{1/q}d\nu(z)\\
    &\le\alpha^{-i\delta}\left(\fint_{U_{i,j}}\sum_{k=i}^\infty\alpha^{-k(1-\delta)q}\fint_{2B_k(z)}g_u^qd\mu\, d\nu(z)\right)^{1/q}.
\end{align*}
By Lemma~\ref{lem:Chain} Claim \eqref{eq:Chain 2}, there exists a constant $C\ge 1$, depending only on the bi-Lipschitz constant between $d$ and $d_\eps$, such that $2B_k(z)\subset B_\eps(z,C\alpha^{-k})$.  Using this, as well as Lemma~\ref{lem:Chain} Claims \eqref{eq:Chain 4} and \eqref{eq:Chain 5}, we then obtain
\begin{align*}
    \fint_{U_{i,j}}|Tu(z)-&u_{B_{i,j}}|d\nu(z)\\
        &\lesssim\alpha^{-i\delta}\left(\fint_{U_{i,j}}\sum_{k=i}^\infty\frac{\alpha^{-k(1-\delta)q+k\beta/\eps}}{\nu(B_\eps(z,\alpha^{-k}))}\int_{10B_{i,j}}g_u(x)^q\chi_{B_\eps(z,C\alpha^{-k})}(x)\, d\mu(x)\,d\nu(z)\right)^{1/q}.
\end{align*}

By Tonelli's theorem and our choice of $q$ and $\delta$, we then obtain
\begin{align*}
     \fint_{U_{i,j}}|Tu(z)-&u_{B_{i,j}}|d\nu(z)\\
     &\lesssim\frac{\alpha^{-i\delta}}{\nu(U_{i,j})^{1/q}}\left(\int_{CB_{i,j}}g_u(x)^q\sum_{k=i}^\infty\alpha^{-k(1-\delta)q+k\beta/\eps}\int_{U_{i,j}}\frac{\chi_{B_\eps(z,C\alpha^{-k})}(x)}{\nu(B_\eps(z,\alpha^{-k}))}\, d\nu(z)\,d\mu(x)\right)^{1/q}\\
     &\lesssim\frac{\alpha^{-i+i\beta/(\eps q)}}{\nu(U_{i,j})^{1/q}}\left(\int_{CB_{i,j}}g_u^q\,d\mu\right)^{1/q}.
\end{align*}
Substituting this into \eqref{eq:512-7} and using the $\beta/\eps$-codimensionality between $\mu$ and $\nu$, we have that 
\begin{align*}
    \sum_{(i,j)\in I}|u_{B_{i,j}}-(Tu)_{U_{i,j}}|^p\int_{B_{i,j}}\frac{d\mu(x)}{d_\eps(x,E)^p} &\lesssim\sum_{(i,j)\in I}\frac{(\alpha^{i\beta/\eps})^{p/q}\mu(B_{i,j})}{\nu(U_{i,j})^{p/q}}\left(\int_{10B_{i,j}}g_u^q\,d\mu\right)^{p/q}\\
    &\lesssim\sum_{(i,j)\in I}\mu(B_{i,j})^{1-p/q}\left(\int_{10B_{i,j}}g_u^q\,d\mu\right)^{p/q}\\
    &\le\sum_{(i,j)\in I}\mu(B_{i,j})^{1-p/q}\left(\int_{B_{i,j}}Mg_u^q\,d\mu\right)^{p/q}\\
    &\le\sum_{(i,j)\in I}\int_{B_{i,j}}(Mg_u^q)^{p/q}\,d\mu\lesssim\int_{\oXeps}(Mg_u^q)^{p/q}\,d\mu.
\end{align*}
Here $M$ is the uncentered Hardy-Littlewood maximal function, and we have used H\"older's inequality and the bounded overlap of the $\{B_{i,j}\}_{(i,j)\in I}$ to obtain the last two inequalities, see Lemma~\ref{lem:HypFillWhitney}.  By boundedness of the maximal function from $L^{p/q}(\oXeps,\mu)$ to $L^{p/q}(\oXeps,\mu)$, see \cite[Theorem~3.5.6]{HKST} for example, we obtain the following estimate of the second term in \eqref{eq:512-6}:
    \begin{align*}
    \sum_{(i,j)\in I}|u_{B_{i,j}}-(Tu)_{U_{i,j}}|^p\int_{B_{i,j}}\frac{d\mu(x)}{d_\eps(x,E)^p}\lesssim\int_{\oXeps} g_u^p\,d\mu.
    \end{align*}
Combining this with \eqref{eq:414} and  \eqref{eq:512-6}, the second term of \eqref{eq:512-3} is now estimated by 
\[
\sum_{(i,j)\in I}\int_{B_{i,j}}\frac{|u(x)-(Tu)_{U_{i,j}}|^p}{d_\eps(x,E)^p}d\mu(x)\lesssim\int_{\oXeps}g_u^p\,d\mu,
\]
and so, with \eqref{eq:512-5}, we obtain the desired inequality:
\[
\int_{\oXeps\setminus E}\frac{|u(x)|^p}{d_\eps(x,E)^p}d\mu(x)\lesssim\int_{\oXeps} g_u^p\,d\mu.
\]
Here, the comparison constant depends only on $\theta$, $p$, $\alpha$, $\tau$, $C_\nu$, and $C_{\theta,p}$.
\end{proof}

We now show that for an appropriate choice of $\theta$, $p$, and $\beta$, a $p$-Hardy inequality with respect to $\mu_\beta$ on $\oXeps$ implies a $(\theta,p)$-Hardy inequality on $Z$.

\begin{prop}\label{prop:XtoZ}
    Let $E\subset Z$ be a closed set, let $1\le p<\infty$, $0<\theta<1$, and $\beta>0$ be such that $\beta/\eps=p(1-\theta)$, and suppose that $\oXeps\setminus E$ satisfies a $p$-Hardy inequality with respect to $\mu_\beta$.  Then $Z\setminus E$ satisfies a $(\theta,p)$-Hardy inequality, with constant depending on $\theta$, $p$, $C_\nu$, and $C_p$.
\end{prop}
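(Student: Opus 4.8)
The plan is to \emph{extend} $f$ into the filling, apply the hypothesized $p$-Hardy inequality on $\oXeps\setminus E$ to the extension, and then transfer the resulting bound back to $Z$ through the Whitney-type cover $\{U_{i,j}\}_{(i,j)\in I_E}$ of $Z\setminus E$ from Lemma~\ref{lem:InducedCover}. Given $f\in\Lip_c(Z\setminus E)$, let $u$ be the extension $Ef$ furnished by Theorem~\ref{thm:HypFill}(5), so that $Tu=f$ and $\int_{\oXeps}g_u^p\,d\mu_\beta\lesssim\|f\|^p_{B^\theta_{p,p}(Z,\nu)}$. The first task is to check that $u$ is an admissible test function, i.e. $u\in\Lip_c(\oXeps\setminus E)$: that $Ef$ is Lipschitz on $\oXeps$ follows from its construction (see \cite{BBS}), and since $f\equiv 0$ on $\{z:d(z,E)<\rho\}$ for some $\rho>0$, the explicit formula \eqref{eq:HypFillExtension} together with \eqref{eq:VertexToBoundary} and the bi-Lipschitz equivalence of $d$ and $d_\eps$ shows that $Ef$ vanishes on a $d_\eps$-neighborhood of $E$; since $(\oXeps,d_\eps)$ is complete, bounded, and carries a doubling measure, it is compact, and hence $Ef$ has compact support in $\oXeps\setminus E$. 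Applying the $p$-Hardy inequality on $\oXeps\setminus E$ to $u$ and then the extension bound yields
\[
\int_{\oXeps\setminus E}\frac{|u(x)|^p}{d_\eps(x,E)^p}\,d\mu_\beta(x)\le C_p\int_{\oXeps}g_u^p\,d\mu_\beta\lesssim C_p\,\|f\|^p_{B^\theta_{p,p}(Z,\nu)}.
\]

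Next I would set up the transfer. By Lemma~\ref{lem:InducedCover} (bounded overlap of $\{U_{i,j}\}_{(i,j)\in I_E}$, $d(U_{i,j},E)\simeq\alpha^{-i}$), one has $\int_{Z\setminus E}|f|^p d(\cdot,E)^{-\theta p}\,d\nu\lesssim\sum_{(i,j)\in I_E}\alpha^{i\theta p}\int_{U_{i,j}}|f|^p\,d\nu$, and for each $(i,j)\in I_E$ I would split, writing $u=Ef$, $B_{i,j}$ as above,
\[
\int_{U_{i,j}}|f|^p\,d\nu\lesssim\int_{U_{i,j}}|f-f_{U_{i,j}}|^p\,d\nu+\nu(U_{i,j})\,|f_{U_{i,j}}-u_{B_{i,j}}|^p+\nu(U_{i,j})\,|u_{B_{i,j}}|^p.
\]
For the first (oscillation) term, Jensen's inequality gives $\int_{U_{i,j}}|f-f_{U_{i,j}}|^p\,d\nu\le\fint_{U_{i,j}}\int_{U_{i,j}}|f(z)-f(w)|^p\,d\nu(w)\,d\nu(z)$; since $d(z,w)\lesssim\alpha^{-i}$ and $B(z,d(z,w))\subset 3U_{i,j}$, hence $\nu(B(z,d(z,w)))\lesssim\nu(U_{i,j})$, for $z,w\in U_{i,j}$, we get $\alpha^{i\theta p}/\nu(U_{i,j})\lesssim d(z,w)^{-\theta p}\nu(B(z,d(z,w)))^{-1}$, and summing over $(i,j)\in I_E$ with bounded overlap shows $\sum_{(i,j)\in I_E}\alpha^{i\theta p}\int_{U_{i,j}}|f-f_{U_{i,j}}|^p\,d\nu\lesssim\|f\|^p_{B^\theta_{p,p}(Z,\nu)}$. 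For the third (large-scale) term, the $\beta/\eps$-codimensionality of $\mu_\beta$ over $\nu$ (Theorem~\ref{thm:HypFill}(4)) gives $\nu(U_{i,j})\simeq\alpha^{i\beta/\eps}\mu_\beta(B_{i,j})$, and since $d_\eps(\cdot,E)\simeq\alpha^{-i}$ on $B_{i,j}$ for $(i,j)\in I_E$ and $\theta p+\beta/\eps=p$, Jensen's inequality and bounded overlap of $\{B_{i,j}\}$ give
\[
\sum_{(i,j)\in I_E}\alpha^{i\theta p}\nu(U_{i,j})\,|u_{B_{i,j}}|^p\lesssim\sum_{(i,j)\in I_E}\alpha^{ip}\int_{B_{i,j}}|u|^p\,d\mu_\beta\simeq\sum_{(i,j)\in I_E}\int_{B_{i,j}}\frac{|u|^p}{d_\eps(\cdot,E)^p}\,d\mu_\beta\lesssim\int_{\oXeps\setminus E}\frac{|u|^p}{d_\eps(\cdot,E)^p}\,d\mu_\beta,
\]
which the Hardy estimate above controls by $C_p\|f\|^p_{B^\theta_{p,p}(Z,\nu)}$.

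For the middle term I would reuse the chaining estimate established in the proof of Proposition~\ref{prop:ZtoX}: choosing $q\in[1,p)$ with $\beta/\eps<q$ and $\delta>0$ with $q(1-\delta)>\beta/\eps$ (possible since $\beta/\eps=p(1-\theta)<p$), chaining $B_{i,j}=B_i(z)$ down to $z\in U_{i,j}$ along $\{B_k(z)\}$ from Lemma~\ref{lem:Chain} while applying the $(1,q)$-Poincar\'e inequality on $\oXeps$ (valid as $\oXeps$ supports a $(1,1)$-Poincar\'e inequality), H\"older's inequality, Lemma~\ref{lem:Chain}(5), and Tonelli's theorem yields
\[
|f_{U_{i,j}}-u_{B_{i,j}}|\le\fint_{U_{i,j}}|Tu(z)-u_{B_{i,j}}|\,d\nu(z)\lesssim\frac{\alpha^{-i+i\beta/(\eps q)}}{\nu(U_{i,j})^{1/q}}\left(\int_{CB_{i,j}}g_u^q\,d\mu_\beta\right)^{1/q}.
\]
Inserting this into $\alpha^{i\theta p}\nu(U_{i,j})|f_{U_{i,j}}-u_{B_{i,j}}|^p$ and using $\nu(U_{i,j})\simeq\alpha^{i\beta/\eps}\mu_\beta(B_{i,j})$, all powers of $\alpha^i$ cancel because $\theta p-p+\beta/\eps=0$, leaving $\sum_{(i,j)\in I_E}\mu_\beta(B_{i,j})^{1-p/q}\left(\int_{CB_{i,j}}g_u^q\,d\mu_\beta\right)^{p/q}$; this is bounded by $\int_{\oXeps}(Mg_u^q)^{p/q}\,d\mu_\beta\lesssim\int_{\oXeps}g_u^p\,d\mu_\beta$ using boundedness of the maximal function on $L^{p/q}(\oXeps,\mu_\beta)$ and bounded overlap of $\{CB_{i,j}\}$ (when $p=1$ one simply takes $q=1$, so the maximal function is not needed), and hence by the extension bound is again $\lesssim\|f\|^p_{B^\theta_{p,p}(Z,\nu)}$. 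Combining the three terms gives $\int_{Z\setminus E}|f|^p d(\cdot,E)^{-\theta p}\,d\nu\lesssim\|f\|^p_{B^\theta_{p,p}(Z,\nu)}$ with constant depending only on $\theta$, $p$, $C_\nu$, and $C_p$ (the parameters $\alpha,\tau$ being fixed).

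The main obstacle I anticipate is the interlocking bookkeeping rather than any single hard estimate: on one side, confirming that $Ef$ is genuinely Lipschitz and compactly supported away from $E$ so the Hardy inequality on the filling even applies, and on the other, arranging the three-way split so that the large-scale piece $\nu(U_{i,j})|u_{B_{i,j}}|^p$ is absorbed into the Hardy quotient on $\oXeps$ while the oscillation piece $\int_{U_{i,j}}|f-f_{U_{i,j}}|^p$ is recognized as a bona fide Besov contribution. The relation $\theta p+\beta/\eps=p$ is exactly what makes the scales balance in both the large-scale term and the chaining term, and keeping track of it (and of the subsidiary choice $\beta/\eps<q<p$, $q(1-\delta)>\beta/\eps$) is the delicate part.
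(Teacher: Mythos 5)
Your proposal is correct in substance for $1<p<\infty$, but it takes a genuinely different route from the paper in handling the deviation part. You split $\int_{U_{i,j}}|f|^p$ three ways (oscillation $|f-f_{U_{i,j}}|$, a middle term $|f_{U_{i,j}}-(Ef)_{B_{i,j}}|$, and the large-scale term $|(Ef)_{B_{i,j}}|$), and you control the middle term by re-running the chaining argument of Proposition~\ref{prop:ZtoX}: Lemma~\ref{lem:Chain}, a $(1,q)$-Poincar\'e inequality in $\oXeps$ with $\beta/\eps<q<p$, and the maximal function on $L^{p/q}$. The paper instead uses only a two-term split, comparing $f(z)$ directly to $(Ef)_{B_{i,j}}$, and exploits the \emph{explicit} averaging formula \eqref{eq:HypFillExtension}: for $x\in B_{i,j}$ one has $|f(z)-Ef(x)|\lesssim\fint_{CU_{i,j}}|f(z)-f(w)|\,d\nu(w)$, so the deviation term is absorbed into the Besov energy by H\"older and bounded overlap of $\{U_{i,j}\}$, with no chaining, no Poincar\'e inequality and no maximal function; your oscillation term is in effect a special case of this estimate, and your large-scale term matches the paper's first term (codimensionality plus the $p$-Hardy inequality in the filling applied to $Ef$, whose membership in $\Lip_c(\oXeps\setminus E)$ you rightly verify). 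What your route buys is independence from the concrete form of $E$ (you only use the norm bounds of Theorem~\ref{thm:HypFill}(5) and $T\circ E=\mathrm{Id}$); what the paper's route buys is a shorter argument that also covers the endpoint below.

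The one genuine gap is your treatment of $p=1$, which the statement includes. Your parenthetical "take $q=1$, so the maximal function is not needed" leaves the middle term as $\sum_{(i,j)\in I_E}\int_{CB_{i,j}}g_{Ef}\,d\mu_\beta$, and summing this requires bounded overlap of the \emph{dilated} balls $\{CB_{i,j}\}$ (with $C\ge 10$ from Lemma~\ref{lem:Chain}(4)). Lemma~\ref{lem:HypFillWhitney} only gives bounded overlap of the undilated balls, and dilated Whitney balls of the filling can pile up near $Z$: a point at $d_\eps$-distance $t$ from $Z$ can lie in $CB_{i,j}$ for on the order of $\log(1/t)$ levels $i$, so this overlap is not uniformly bounded, and the maximal-function substitute is unavailable at exponent $p/q=1$. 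The paper's proof avoids this entirely and works verbatim for $p=1$; to repair your argument at that endpoint you would either need to prove bounded overlap of the relevant dilated subfamily indexed by $I_E$ (which would require tracking the bi-Lipschitz constants carefully) or switch to the paper's direct estimate via \eqref{eq:HypFillExtension} for the deviation term.
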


\begin{proof}
    Let $u\in\Lip_c(Z\setminus E)$, and let $I_E\subset I$ be the indexing set given by Lemma~\ref{lem:InducedCover}.  For ease of notation, we again set $\mu:=\mu_\beta$.  We then have that 
    \begin{align}\label{eq:512-1}
        \int_{Z\setminus E}\frac{|u(z)|^p}{d(z,E)^{\theta p}}d\nu(z)&\le\sum_{(i,j)\in I_E}\int_{U_{i,j}}\frac{|u(z)|^p}{d(z,E)^{\theta p}}d\nu(z)\nonumber\\
        &\lesssim\sum_{(i,j)\in I_E}|(Eu)_{B_{i,j}}|^p\int_{U_{i,j}}\frac{d\nu(z)}{d(z,E)^{\theta p}}+\sum_{(i,j)\in I_E}\int_{U_{i,j}}\frac{|u(z)-(Eu)_{B_{i,j}}|^p}{d(z,E)^{\theta p}}d\nu(z).
    \end{align}
    Here $Eu\in N^{1,p}(\oXeps,\mu_\beta)$ is the extension of $u$ given by Theorem~\ref{thm:HypFill}, see \eqref{eq:HypFillExtension}.    
    
    By Lemma~\ref{lem:InducedCover} and \eqref{eq:VertexToBoundary}, we have that
    \[
    d(U_{i,j},E)\simeq\alpha^{-i}\simeq d_\eps(B_{i,j},U_{i,j}).
    \]
    Using this, along with the $\beta/\eps$-codimensionality between $\mu$ and $\nu$, the assumption that $\beta/\eps=p(1-\theta)$, and bounded overlap of $\{B_{i,j}\}_{(i,j)\in I}$, we estimate the first term on the right-hand side of \eqref{eq:512-1} as follows:
    \begin{align*}
        \sum_{(i,j)\in I_E}|(Eu)_{B_{i,j}}|^p\int_{U_{i,j}}\frac{d\nu(z)}{d(z,E)^{\theta p}}&\lesssim\sum_{(i,j)\in I_E}\frac{\nu(U_{i,j})}{\alpha^{-i\theta p}}\fint_{B_{i,j}}|Eu|^pd\mu\\
        &\simeq\sum_{(i,j)\in I_E}\frac{\nu(U_{i,j})}{\mu(B_{i,j})}\int_{B_{i,j}}\frac{|Eu(x)|^p}{d_\eps(x,E)^{\theta p}}d\mu(x)\\
        &\simeq\sum_{(i,j)\in I_E}\frac{1}{\alpha^{-i\beta/\eps}}\int_{B_{i,j}}\frac{|Eu(x)|^p}{d_\eps(x,E)^{\theta p}}d\mu(x)\\
        &\simeq\sum_{(i,j)\in I_E}\int_{B_{i,j}}\frac{|Eu(x)|^p}{d_\eps(x,E)^{p}}d\mu(x)\lesssim\int_{\oXeps\setminus E}\frac{|Eu(x)|^p}{d_\eps(x,E)^{p}}d\mu(x).
    \end{align*}
    As $u\in \Lip_c(Z\setminus E)$, it follows that $Eu\in\Lip_c(\oXeps\setminus E)$, see \eqref{eq:HypFillExtension}. 
 Since $\oXeps\setminus E$ satisfies a $p$-Hardy inequality with respect to $\mu_\beta$, and by the boundedness of the extension operator, see Theorem~\ref{thm:HypFill}, we have that
    \begin{align}\label{eq:512-2}
        \sum_{(i,j)\in I_E}|(Eu)_{B_{i,j}}|^p\int_{U_{i,j}}\frac{d\nu(z)}{d(z,E)^{\theta p}}&\lesssim\int_{\oXeps} g_{Eu}^pd\mu\nonumber\\
        &\lesssim\int_Z\int_Z\frac{|u(z)-u(w)|^p}{d(z,w)^{\theta p}\nu(B(z,d(z,w)))}d\nu(w)d\nu(z).
    \end{align}

    We now estimate the second term on the right-hand side of \eqref{eq:512-1}.  If $x\in B_{i,j}$, for $(i,j)\in I_E$, then there exist vertices $v_1=(y_1,i_1)$ and $v_2=(y_2,i_2)$ such that $x\in[v_1,v_2]$, and $i_1\simeq i\simeq i_2$.  Thus, by the definition of $E$ given by \eqref{eq:HypFillExtension}, for $z\in U_{i,j}$ and $x\in U_{i,j}$, there exists $k\in\{1,2\}$ such that 
    \[
    |u(z)-Eu(x)|\le \left |u(z)-\fint_{B_Z(y_k,\alpha^{i_k})}u\,d\nu\right|\le\fint_{B_Z(y_k,\alpha^{i_k})}|u(z)-u(w)|d\nu(w).
    \]
    Since $x\in B_{i,j}$ and since $i_k\simeq i$, there exists $C:=C(\alpha,\tau,C_\nu)$ such that 
    \[
    |u(z)-Eu(x)|\lesssim\fint_{CU_{i,j}}|u(z)-u(w)|d\nu(w),
    \]
    where $CU_{i,j}=B_Z(z_{i,j},C\alpha^{-i})\cap Z$, and we have used the doubling property of $\nu$.  Thus, by H\"older's inequality, we have that 
    \begin{align*}
        \int_{U_{i,j}}\frac{|u(z)-(Eu)_{B_{i,j}}|^p}{d(z,E)^{\theta p}}d\nu(z)&\le\frac{1}{\alpha^{-i\theta p}}\int_{U_{i,j}}\fint_{B_{i,j}}|u(z)-Eu(x)|^pd\mu(x)d\nu(z)\\
        &\lesssim\frac{1}{\alpha^{-i\theta p}}\int_{U_{i,j}}\fint_{B_{i,j}}\fint_{CU_{i,j}}|u(z)-u(w)|^pd\nu(w)\,d\mu(x)\,d\nu(z)\\
        &=\frac{1}{\alpha^{-i\theta p}}\int_{U_{i,j}}\fint_{CU_{i,j}}|u(z)-u(w)|^pd\nu(w)\,d\nu(z)\\
        &\lesssim\int_{U_{i,j}}\int_{CU_{i,j}}\frac{|u(z)-u(w)|^p}{d(z,w)^{\theta p}\nu(B(z,d(z,w)))}d\nu(w)d\nu(z)\\
        &\le\int_{U_{i,j}}\int_{Z}\frac{|u(z)-u(w)|^p}{d(z,w)^{\theta p}\nu(B(z,d(z,w)))}d\nu(w)d\nu(z).
    \end{align*}
    By bounded overlap of the collection $\{U_{i,j}\}_{(i,j)\in I_E}$, we then obtain
    \[
    \sum_{(i,j)\in I_E}\int_{U_{i,j}}\frac{|u(z)-(Eu)_{B_{i,j}}|^p}{d(z,E)^{\theta p}}d\nu(z)\lesssim\int_Z\int_Z\frac{|u(z)-u(w)|^p}{d(z,w)^{\theta p}\nu(B(z,d(z,w)))}d\nu(w)d\nu(z).
    \]
    Combining this estimate with \eqref{eq:512-1} and \eqref{eq:512-2} completes the proof.  Here, the comparison constants depend only on $\theta$, $p$, $\alpha$, $\tau$, $C_\nu$, and $C_p$.
\end{proof}

\section{Localization and proof of Theorem~\ref{thm:FracHardyImprovement}}\label{sec:Self-Improvement}

In this section, we combined the results of the previous sections with a localization argument in order to prove Theorem~\ref{thm:FracHardyImprovement}.

\subsection{Localization}\label{sec:localization}
We now let $(Z,d,\nu)$ be a complete doubling metric measure space. In order to apply Theorem~\ref{thm:HypFill Hardy}, which is proven for a \emph{compact} doubling metric measure space, in the proof of Theorem \ref{thm:FracHardyImprovement}, we first need to show that if $E\subset Z$ is a closed set, with $Z\setminus E$ bounded satisfying a $(\theta,p)$-Hardy inequality, then we can find a compact set $Z_0\subset Z$ such that the $\nu|_{Z_0}$ is doubling, $Z_0\setminus E=Z\setminus E$, and that $Z_0\setminus E$ satisfies a $(\theta,p)$-Hardy inequality.  The basic step is finding an exhaustion of $Z$ by sets $Z_R$, $R>0$, so that $Z_R$ equipped with the restricted measures are doubling. This is a much simplified version of an exhaustion by uniform domains constructed by Rajala in \cite{R}. The restricted measure is given by $\nu|_{Z_R}(A)=\nu(Z_R\cap A)$ and the restricted distance is $d|_{Z_R\times Z_R}$. For simplicity, we will use $d$ to denote the distance in both the subset and entire space.
\begin{prop}\label{prop:exhaustion} Let $(Z,d,\nu)$ be complete and doubling and $z_0\in Z$. For every $R>0$, there exists a compact set $Z_R\subset Z$ for which
\begin{enumerate}
    \item $B(z_0,R)\subset Z_R \subset B(z_0,2R)$, and
    \item $(Z_R,d|_{Z_R\times Z_R}, \nu|_{Z_R})$ is doubling.
\end{enumerate}
\end{prop}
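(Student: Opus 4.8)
The plan is to build $Z_R$ directly as a bounded, closed (hence compact, by completeness and the Heine--Borel property available in doubling spaces) subset of $Z$ sandwiched between $B(z_0,R)$ and $B(z_0,2R)$, and then to verify that the restricted measure remains doubling at \emph{all} scales. The inclusion $B(z_0,R)\subset Z_R\subset B(z_0,2R)$ together with closedness is easy to arrange --- one can simply take $Z_R:=\overline{B(z_0,R)}$, which is compact since a complete doubling space is proper. The real content is item (2): the doubling property of $\nu|_{Z_R}$.

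First I would record the only difficulty. For a ball $B(x,r)$ with $x\in Z_R$ and $r$ small compared to $\operatorname{dist}(x,Z\setminus Z_R)$, we have $B(x,2r)\subset Z_R$ and so $\nu|_{Z_R}(B(x,2r))=\nu(B(x,2r))\le C_\nu\nu(B(x,r))=C_\nu\,\nu|_{Z_R}(B(x,r))$; there is nothing to prove. The problem is genuinely at \emph{large} scales, i.e. when $r\gtrsim R$: here $B(x,r)$ may stick far outside $Z_R$, so $\nu|_{Z_R}(B(x,r))$ could in principle be much smaller than $\nu(B(x,r))$, while $\nu|_{Z_R}(B(x,2r))$ is bounded by $\nu(Z_R)\le\nu(B(z_0,2R))$. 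Thus the key estimate I need is a lower bound
\[
\nu|_{Z_R}(B(x,r))=\nu(B(x,r)\cap Z_R)\ \gtrsim\ \nu(B(z_0,2R))
\]
for all $x\in Z_R$ and $r\gtrsim R$, with a constant depending only on $C_\nu$. Since $x\in Z_R\subset B(z_0,2R)$, once $r\ge 4R$ we have $B(z_0,R)\subset B(x,r)$, and $B(z_0,R)\subset Z_R$, so $\nu(B(x,r)\cap Z_R)\ge\nu(B(z_0,R))\gtrsim_{C_\nu}\nu(B(z_0,2R))$ by the doubling property of $\nu$ in $Z$. For the intermediate range, say $cR\le r\le 4R$ for a fixed small $c$: pick a point on a near-geodesic-ish chain, or more simply use that $B(x,r)\supset B(x,cR)$ and $B(x,cR/2)\subset Z_R$ provided $c$ is chosen so that $B(x,cR/2)\subset B(z_0,2R)$ --- this holds for $c\le 1$ --- whence $\nu(B(x,r)\cap Z_R)\ge\nu(B(x,cR/2))\gtrsim_{C_\nu}\nu(B(x,2R))\ge\nu(B(z_0,2R))/C_\nu$ after relating $B(x,cR/2)$ and $B(x,4R)\supset B(z_0,2R)$ by iterated doubling. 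Combining the three regimes (small $r$, $r\in[cR,4R]$, $r\ge 4R$) gives a uniform doubling constant for $\nu|_{Z_R}$ depending only on $C_\nu$ (and on the fixed numerical ratios, hence ultimately only on $C_\nu$).

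Concretely, I would structure the write-up as: (a) set $Z_R:=\overline{B(z_0,R)}$ and note compactness via properness of $(Z,d)$, which follows from completeness plus doubling; verify the sandwich inclusion. (b) Fix $x\in Z_R$, $r>0$, and split into the three cases above, in each case bounding $\nu(B(x,2r)\cap Z_R)\le\nu(B(z_0,2R))$ from above when $r$ is large, or $\le\nu(B(x,2r))$ trivially, and bounding $\nu(B(x,r)\cap Z_R)$ from below by the doubling property of $\nu$ on $Z$ applied to a concentric ball contained in $B(z_0,R)\subset Z_R$. (c) Conclude a doubling inequality for $\nu|_{Z_R}$ with constant $C=C(C_\nu)$. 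The main obstacle, as indicated, is purely the large-scale lower bound on $\nu(B(x,r)\cap Z_R)$; once one observes that every such ball with $r\gtrsim R$ swallows the fixed ball $B(z_0,R)$, the estimate is immediate from doubling in $Z$, and everything else is bookkeeping.
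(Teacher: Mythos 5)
There is a genuine gap: the choice $Z_R:=\overline{B(z_0,R)}$ does not work in a general complete doubling space, and your case analysis misses exactly the scales where it fails. You treat small $r$ only when $r$ is small compared to $\dist(x,Z\setminus Z_R)$, but for points $x$ with $d(x,z_0)$ close to $R$ this distance is $0$, so no small radius is covered; and your intermediate-range step asserts ``$B(x,cR/2)\subset Z_R$ provided $c\le 1$,'' which conflates containment in $B(z_0,2R)$ with containment in $Z_R$ --- a ball of radius $cR/2$ centered near the metric boundary of $\overline{B(z_0,R)}$ sticks outside $Z_R$, so the lower bound $\nu(B(x,r)\cap Z_R)\ge\nu(B(x,cR/2))$ is unjustified. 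The difficulty is not at large scales (your $r\ge 4R$ argument, which swallows $B(z_0,R)$, is fine and is also how the paper handles that regime); it is at small and intermediate scales for boundary points. A concrete counterexample: let $Z\subset\R^2$ be $(\R\times\{0\})\cup\{(t,1):t\ge 0\}$ with the Euclidean (chordal, not path) metric and $\nu=\mathcal H^1$; this is complete and doubling, but with $z_0=(0,0)$ and $R=1$ the set $\overline{B(z_0,1)}$ contains the point $x=(0,1)$ as an atom-free isolated point of the restricted measure: $\nu|_{Z_R}(B(x,r))=0$ for $r<1$ while $\nu|_{Z_R}(B(x,2r))>0$ for $r>1/2$, so $\nu|_{Z_R}$ is not doubling.

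This is precisely why the paper does not take the closed ball: it builds $Z_R$ as the closure of a recursively thickened set, $\Omega_0=B(z_0,R)$ and $\Omega_{i+1}=\{y:d(y,\Omega_i)\le 4^{-i-1}R\}$, which forces a corkscrew condition (for every $z\in Z_R$ and $r<4R$ there is $z_r\in B(z,r/3)\cap Z_R$ with $B(z_r,r/48)\subset B(z,r)\cap Z_R$), and that corkscrew ball is what supplies the missing lower bound $\nu(B(z,r)\cap Z_R)\gtrsim\nu(B(z,2r))$ at small and intermediate scales. The paper even remarks that your choice $Z_R=\overline{B(z_0,R)}$ suffices \emph{when $Z$ is geodesic}; without geodesicity (which is not assumed here), some device ensuring interior thickness near the boundary, such as the recursive construction or another corkscrew-type argument, is unavoidable. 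To repair your proof you would need to replace the closed ball by such a set, or prove a corkscrew property for it, which in general is false.
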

\begin{proof}
    Define recursively the following sets
    \[
    \Omega_0 = B(z_0,R),\quad \Omega_{i+1}=\{y\in Z : d(y,\Omega_{i})\leq 4^{-i-1} R).
    \]
    Let 
    \[
Z_R=\overline{\bigcup_{i=0}^\infty \Omega_i}.
    \]
    The set $Z_R$ is closed by definition, and to show that it is compact it suffices to prove that it is bounded. By induction on $i\in \N$ one shows that 
    \[\Omega_i \subset B(z_0, \sum_{k=0}^i R4^{-k}) \subset B(z_0, 4/3 R).\]
    Thus, $Z_R \subset B(z_0, 2R)$ and $Z_R$ is compact. It is also direct that $B(z_0,R)=\Omega_0\subset Z_R$.

    Next, we prove that $Z_R$ equipped with the restricted measure $\nu|_{Z_R}$ is doubling. Let $z\in Z_R$ and let $r>0$. First, if $r>4R$, then 
    \[
    \nu|_{Z_R}(B(z,r))=\nu(B(z,2r)\cap Z_R)=\nu(Z_R)\geq \nu|_{Z_R}(B(z,2r)).
    \]
    Since $\nu(Z_R)\geq \nu(B(z_0,R))>0$, we get doubling for all radii $r>4R$.

    Thus, we are left to consider $r<4R$. For this, we prove a version of the \emph{corkscrew condition} from \cite{BS}. Let $c=1/48$. The corkscrew condition is: For every $z\in Z_R$ there exits an $z_r\in Z_R\cap B(z,r/3)$ so that $B(z_r,cr)\subset B(z,r)\cap Z_R$.

    First, choose $k\in \N$ so that $R4^{-k}<r\leq R4^{1-k}$. By induction, for every $p\in \Omega_i$ and all $i\in \N$ we have $d(p,\Omega_k)\leq 4^{-k}R/3$. This follows directly for $i\leq k$, and induction step follows from the definition. Thus, $d(z,\Omega_k)\leq 4^{-k}R/3$. 

    Pick $z_r\in \Omega_k$ so that $d(z_r, z) \leq 4^{-k}R/3\leq r/3$. Then, 
    \[B(z_r, cr)\subset B(z_r, R4^{-1-k})\subset Z_R\]
    by construction, and $B(z_r,cr)\subset B(z,r)$ by the choice of $c$.
    
    With the corkscrew condition established, the doubling condition is fairly direct. First, by construction:
    \[
    \nu|_{Z_R}(B(z, 2r)) \leq \nu(B(z,2r)),
    \]
    and
    \[
    \nu|_{Z_R}(B(z, r))=\nu(Z_R\cap B(z,r))\geq \nu(B(z_r, cr))\geq C\nu(B(z,2r)),
    \]
    where $C$ is some constant from the doubling bound. It is also direct that $\nu|_{Z_R}(B(z, r))>0$, and thus doubling follows.
\end{proof}

\begin{remark} The proposition above can often be much simplified.
    If $Z$ is geodesic, then it suffices to define $Z_R=\overline{B(z_0,R)}$. Thus, exhaustions by balls work for example in all Euclidean spaces.
\end{remark}

Given this exhaustion, we can prove the following localization argument.

\begin{prop}\label{prop:localization} Let $(Z,d,\nu)$ be complete and doubling, let $0<\theta<1$ and $1<p<\infty$, and let $E\subset Z$ be a closed set such that $Z\setminus E$ is bounded.  Let $z_0\in Z\setminus E$, and let $Z_R$ be the exhaustion constructed in Proposition \ref{prop:exhaustion}.
\begin{itemize}
\item[(i)]If $Z\setminus E$ satisfies a $(\theta,p)$-Hardy inequality, then there exists an $R>0$ so that $Z\setminus E = Z_R\setminus E$ and $Z_R\setminus E$ satisfies a $(\theta,p)$-Hardy inequality.

\item[(ii)]Conversely, if $R>0$ is such that $Z\setminus E = Z_R\setminus E$ and $Z_R\setminus E$ satisfies a $(\theta,p)$-Hardy inequality, then $Z\setminus E$ satisfies a $(\theta,p)$-Hardy inequality.
\end{itemize}
\end{prop}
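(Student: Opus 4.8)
The plan is to deduce both implications from three facts linking data on $Z$ to data on the compact piece $Z_R$. Write $\Omega:=Z\setminus E$; this set is open and bounded, $z_0\in\Omega$, and we fix $\rho>0$ with $\Omega\subseteq B(z_0,\rho)$. (We may assume $E\neq\varnothing$, the case $E=\varnothing$ being trivial: then $\Omega=Z$ is compact and the distance-to-complement is identically $+\infty$ in both spaces, so the $(\theta,p)$-inequality reads $0\le C_{\theta,p}\cdot(\text{Besov energy})$, and $Z_R=Z$ once $R$ is large.) \emph{Fact 1 (distance identity).} If $\Omega\subseteq Z_R$ -- which holds for every $R\ge\rho$, in which case also $Z_R\setminus E=Z_R\cap\Omega=\Omega$ -- then $d(x,Z_R\setminus\Omega)=d(x,E)$ for every $x\in\Omega$: the inequality ``$\ge$'' is clear since $Z_R\setminus\Omega=Z_R\cap E\subseteq E$, while for ``$\le$'' one uses that $Z$ is proper (complete and doubling), so $d(x,E)$ is attained at some $e^\ast\in E$; the open ball $B(x,d(x,E))$ misses $E$, hence lies in $\Omega\subseteq Z_R$, so $e^\ast\in\overline{B(x,d(x,E))}\subseteq Z_R$, i.e. $e^\ast\in Z_R\cap E$. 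Since $\nu|_{Z_R}$ agrees with $\nu$ on $\Omega\subseteq Z_R$, it follows that for every $u\in\Lip_c(\Omega)$ the left-hand side of the $(\theta,p)$-Hardy inequality is literally the same computed in $Z$ or in $Z_R$. \emph{Fact 2 (kernel comparison).} If $x,y\in Z_R$ then $d(x,y)<\diam Z_R\le 4R$ by Proposition~\ref{prop:exhaustion}(1), so the corkscrew condition for $Z_R$ established in the proof of Proposition~\ref{prop:exhaustion} applies with $z=x$, $r=d(x,y)$, and gives $\nu(B(x,d(x,y)))\le C\,\nu(Z_R\cap B(x,d(x,y)))$ with $C=C(C_\nu)$; as $B_{Z_R}(x,r)=Z_R\cap B(x,r)$, this says the Besov kernel of $Z_R$ is, on $Z_R\times Z_R$, at most $C(C_\nu)$ times that of $Z$. \emph{Fact 3 (tail bound).} For fixed $x$ and $s>0$, splitting $\{y:d(x,y)>s\}$ into dyadic annuli and using doubling, $\int_{\{d(x,y)>s\}}\frac{d\nu(y)}{d(x,y)^{\theta p}\,\nu(B(x,d(x,y)))}\le C(C_\nu,\theta,p)\,s^{-\theta p}$.

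For part (ii): let $R$ satisfy $\Omega=Z\setminus E=Z_R\setminus E$, and let $u\in\Lip_c(\Omega)$ (which is the same class of test functions in either ambient space, since compactness of the support is intrinsic to $\Omega$). By Fact~1 the left-hand side of the $(\theta,p)$-inequality in $Z$ equals that in $Z_R$, hence is at most $C_p^{Z_R}$ times the Besov energy of $u$ over $Z_R\times Z_R$ taken with the kernel of $Z_R$; by Fact~2 this is at most $C(C_\nu)C_p^{Z_R}$ times the same integral over $Z_R\times Z_R$ taken with the kernel of $Z$, which in turn is at most the full Besov energy of $u$ over $Z\times Z$. This is exactly the desired $(\theta,p)$-Hardy inequality in $Z$, with constant $C(C_\nu)C_p^{Z_R}$; no largeness of $R$ is needed.

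For part (i): pick $R\ge\rho$ so large that the absorption below succeeds; then $Z_R\setminus E=\Omega$ and $E\cap Z_R\neq\varnothing$ (by the argument in Fact~1), so the $p$-Hardy inequality for $\Omega$ in $Z_R$ makes sense. Given $u\in\Lip_c(\Omega)$, split the Besov energy of $u$ over $Z\times Z$ as the part over $Z_R\times Z_R$ plus a remainder. Since $\nu(Z_R\cap B)\le\nu(B)$, the first part is at most the Besov energy of $u$ computed in $Z_R$ -- which is the right-hand side we want -- while in the remainder the integrand vanishes unless one of $x,y$ lies in $\operatorname{supp}u\subseteq B(z_0,\rho)$ and the other outside $Z_R$; in that case $d(x,y)>R/4$, because $Z_R$ contains the $(R/4)$-neighbourhood of $B(z_0,R)$. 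Using the near-symmetry of the kernel ($\nu(B(x,d(x,y)))\simeq\nu(B(y,d(x,y)))$, by doubling) together with Fact~3, the remainder is at most $C(C_\nu,\theta,p)\,R^{-\theta p}\int_{\operatorname{supp}u}|u|^p\,d\nu$. Finally $\int_{\operatorname{supp}u}|u|^p\,d\nu\le M^{\theta p}\int_\Omega |u|^p\,d(\cdot,E)^{-\theta p}\,d\nu$, where $M:=\sup_\Omega d(\cdot,E)\le\rho+d(z_0,e_0)<\infty$ for any fixed $e_0\in E$, and by Fact~1 this last integral is the left-hand side in $Z_R$. Putting this together with the $(\theta,p)$-Hardy inequality on $Z$ (constant $C_{\theta,p}$): the left-hand side in $Z_R$ is at most $C_{\theta,p}$ times the right-hand side in $Z_R$ plus $C_{\theta,p}C(C_\nu,\theta,p)(4M/R)^{\theta p}$ times the left-hand side in $Z_R$; choosing $R$ (and $R\ge\rho$) so that the last coefficient is $\le\tfrac12$ and absorbing, $Z_R\setminus E$ satisfies a $(\theta,p)$-Hardy inequality with constant $2C_{\theta,p}$.

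The main obstacle is the simultaneous bookkeeping of the two Besov energies against their two different normalizations: one must make the corkscrew comparison (Fact~2) point the useful way for part (ii), and, for part (i), verify that the long-range portion of the Besov energy discarded when restricting from $Z$ to $Z_R$ is genuinely controlled by the left-hand side -- via the crude bound $\|u\|_{L^p}\lesssim(\text{LHS})^{1/p}$ and the tail estimate -- so that it can be absorbed once $R$ is large. The distance identity (Fact~1), although short, is equally essential and is precisely where properness of $Z$ is used.
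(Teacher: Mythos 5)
Your overall route is the paper's: for (i) you split the Besov energy over $Z\times Z$ into the part over $Z_R\times Z_R$ plus a long-range remainder, control the remainder by a dyadic-annuli tail estimate and doubling, and absorb it into the left-hand side for $R$ large; for (ii) you compare the two inequalities directly. Two of your refinements are genuinely useful: the corkscrew/doubling comparison $\nu(B(x,r))\le C(C_\nu)\,\nu(Z_R\cap B(x,r))$ (your Fact~2) is in fact needed for (ii), since restricting from $Z$ to $Z_R$ shrinks the domain of integration but \emph{enlarges} the Besov kernel (the ball measures $\nu(Z_R\cap B)$ are smaller), a point the paper passes over with the phrase ``potentially decreases the right hand side''; and in (i) you correctly make the absorption threshold depend on $M=\sup_{\Omega}d(\cdot,E)$, which is the right quantity (note $M$ need not be comparable to $\diam\Omega$ when $Z$ is disconnected).

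The one step that fails as written is Fact~1. A point $e^\ast\in E$ realizing $d(x,E)$ need \emph{not} lie in $\overline{B(x,d(x,E))}$ in a general metric space: take $Z=\{0\}\cup[10,11]\subset\R$ with a doubling measure, $E=\{0\}$, $x=11$; then $\overline{B(x,d(x,E))}=[10,11]$ misses $e^\ast=0$. Worse, in this example one can have $Z\setminus E=Z_R\setminus E$ with $E\cap Z_R=\varnothing$ (e.g.\ $z_0=10.5$, $R=1$ gives $Z_R=[10,11]$), so the distance identity $d(x,Z_R\cap E)=d(x,E)$ and hence your claim that the left-hand sides ``are literally the same'' genuinely fail; your chain of inequalities in (ii) then breaks at its first step, since the $Z_R$-inequality becomes vacuous while the $Z$-side is not. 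For part (i) this is harmless: once $R\ge \rho+M$ (which your absorption already forces, up to constants), every minimizer $e^\ast$ satisfies $d(z_0,e^\ast)\le\rho+M<R$, so $e^\ast\in B(z_0,R)\subset Z_R$ and the identity holds by this direct estimate rather than the ball-closure argument. For part (ii) as stated, where $R$ is prescribed, you must either invoke the proposition only with the large $R$ produced in (i) (which is how it is used in the paper) or supply a separate argument for configurations in which the nearest portion of $E$ lies outside $Z_R$; to be fair, the paper's own one-line proof of (ii) (``does not alter the left hand side'') silently assumes the same identity, so your write-up makes explicit, but does not correctly justify, a step that the paper leaves implicit.
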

\begin{proof}
    Consider the inequality for the open set $\Omega=Z\setminus E$ in the space $X=Z$:
    \begin{equation}\label{eq:frachardyproof}
    \int_{Z\setminus E} \frac{|u(z)|^p}{d(z,Z\setminus E)^{\theta p}} d\nu(z) \leq C_{\theta,p} \int_Z\int_Z \frac{|u(z)-u(w)|^p}{d(z,w)^{\theta p} \nu(B(z,d(z,w)))} d\nu(w) d\nu(z)
    \end{equation}
    for all Lipschitz functions $u$ with compact support in $Z\setminus E$. If $R>0$ is such that $Z\setminus E=Z_R\setminus E$, 
    then replacing $Z$ by $Z_R$ in \eqref{eq:frachardyproof} inequality does not alter the left hand side, but potentially decreases the right hand side. This shows immediately that the fractional Hardy inequality for $\Omega=Z_R\setminus E$ and $X=Z_R$ implies the same for $\Omega=Z\setminus E$ and $X=Z$.  This gives us (ii).

    We now prove (i). Suppose that $Z\setminus E$ satisfies a $(\theta,p)$-Hardy inequality, and let $R>2\diam(Z\setminus E)$ be such that $Z\setminus E=Z_R\setminus E$. We will show that the conclusion holds for $R$ sufficiently large.  To this end, we estimate the right hand side of \eqref{eq:frachardyproof}
    as follows:
    
\begin{align*}
    \int_Z\int_Z &\frac{|u(z)-u(w)|^p}{d(z,w)^{\theta p} \nu(B(z,d(z,w)))} d\nu(w)d\nu(z)\\
    &=\int_{Z_R}\int_{Z_R} \left(\cdots\right)d\nu(w) d\nu(z)+\int_{Z_R}\int_{Z\setminus Z_R}(\cdots)d\nu(w)d\nu(z)+\int_{Z\setminus Z_R}\int_{Z_R}(\cdots)d\nu(w)d\nu(z)  \\
    &\leq \int_{Z_R}\int_{Z_R} \frac{|u(z)-u(w)|^p}{d(z,w)^{\theta p} \nu(B(z,d(z,w)))} d\nu(w) d\nu(z) + \int_{E^c}\int_{ B(z_0,R)^c}\frac{|u(z)|^pd\nu(w) d\nu(z)}{d(z,w)^{\theta p} \mu(B(z,d(z,w)))} \\
    &\hspace{+3cm}+
\int_{B(z_0,R)^c}\int_{E^c}\frac{|u(w)|^p}{d(z,w)^{\theta p} \mu(B(z,d(z,w)))} d\nu(w) d\nu(z). \\
&\leq \int_{Z_R}\int_{Z_R} \frac{|u(z)-u(w)|^p}{d(z,w)^{\theta p} \nu(B(z,d(z,w)))} d\nu(w) d\nu(z) \\
    &\hspace{+3cm}+ (1+C_{\nu})\int_{E^c}\int_{ B(z_0,R)^c}\frac{|u(z)|^p}{d(z,w)^{\theta p} \nu(B(z,d(z,w)))} d\nu(w) d\nu(z). \\
\end{align*}

Writing $B(z_0,R)^c=\bigcup_{k=0}^\infty B(z_0,2^{k+1} R)\setminus B(z_0,2^k R)$, we see that if $z\in E^c$ and $w\in B(z_0,2^{k+1} R)\setminus B(z_0,2^k R)$ then $d(z,w)\geq d(z_0,w)-d(z_0,z)\geq 2^{k}R-R/2=2^{k-1}R$. We also have that $B(z,d(z,w))\subset B(z_0,d(z,w)+d(z_0,z))\subset B(z_0,2^{k+2}R)$. Using this and doubling, we get for $z\in E^c$,
\begin{align*}
\int_{ B(z_0,R)^c} \frac{1}{d(z,w)^{\theta p} \nu(B(z,d(z,w)))} d\nu(w) &\leq \sum_{k=0}^\infty \int_{ B(z_0,2^{k+1} R)\setminus B(z_0,2^k R)} \frac{1}{d(z,w)^{\theta p} \nu(B(z,d(z,w)))} d\nu(w) \\
&\leq R^{-\theta p}\sum_{k=0}^\infty\frac{\nu(B(z_0,2^{k+1}R)\setminus B(z_0,2^kR))}{2^{(k-1)\theta p}\nu(B(z,2^{k-1}R))} \\
&\leq C_1R^{-\theta p},
\end{align*}
where $C_1\ge 1$ is a constant depending only on $C_\nu$, $\theta$, and $p$. 
Hence, if $R>(2C_{\theta,p}C_1(1+C_\nu))^{\frac{1}{\theta p}}$, then together with the previous bounds, we obtain
\begin{align*}
    \int_{Z\setminus E}& \frac{|u(z)|^p}{d(z,Z\setminus E)^{\theta p}} d\nu(z) \leq C_{\theta,p} \int_Z\int_Z \frac{|u(z)-u(w)|^p}{d(z,w)^{\theta p} \nu(B(z,d(z,w)))} d\nu(w) d\nu(z) \\
    &\leq C_{\theta,p} \int_{Z_R}\int_{Z_R} \frac{|u(z)-u(w)|^p}{d(z,w)^{\theta p} \mu(B(z,d(z,w)))} d\nu(w) d\nu(z) + \frac{1}{2} \int_{Z\setminus E} \frac{|u(z)|^p}{d(z,Z\setminus E)^{\theta p}} d\nu(z).
\end{align*}
Absorbing the second term to the left hand side yields the fractional Hardy inequality for $Z_R\setminus E$ in $Z_R$, which establishes (i).
\end{proof}

\subsection{Proof of Theorem~\ref{thm:FracHardyImprovement}}

The localization argument above reduces the proof of self-improvement in (possibly unbounded) $Z$ to the self-improvement result in the compact and doubling space $Z_R$ for some $R>0$, where we are able to apply Theorem~\ref{thm:HypFill Hardy}.

\begin{proof}[Proof of Theorem~\ref{thm:FracHardyImprovement}] 
By Proposition~\ref{prop:localization}, we may assume without loss of generality that $Z$ is compact.  We also note that the $(\theta,p)$-Hardy inequality is invariant under scaling of the metric.  That is, for all $\lambda>0$, $Z\setminus E$ satisfies a $(\theta,p)$-Hardy inequality with respect to $d$ if and only if it does so with respect to $\lambda d$, with the same constant.  Therefore, replacing $d$ with $d/(2\diam(Z))$ if necessary, we may assume without loss of generality that $\diam(Z)<1$.

Fix $\alpha=e^{1/4}$ $\tau=2$, and let $\eps:=\log\alpha=1/4$.  Choose $\beta_0>0$ satisfying
\[
\frac{\beta_0}{\eps}=p_0(1-\theta_0),
\]
and consider the uniformized hyperbolic filling $(\oXeps,d_\eps,\mu_{\beta_0})$ of $(Z,d,\nu)$ as constructed in Section~\ref{sec:HypFill}.  As $Z\setminus E$ satisfies a $(\theta_0,p_0)$-Hardy inequality, it follows from Theorem~\ref{thm:HypFill Hardy} that $\oXeps\setminus E$ satisfies a $p_0$-Hardy inequality with respect to $\mu_{\beta_0}$, with constant $C_{p_0}:=C_{p_0}(\theta_0,p_0,C_{\theta_0,p_0},C_\nu)$.

Note that by Theorem~\ref{thm:HypFill}, $(\oXeps,d_\eps,\mu_{\beta_0})$ is doubling and supports a $(1,1)$-Poincar\'e inequality, with doubling constant $C_{\mu_{\beta_0}}$ and Poincar\'e inequality constants depending only on $\theta_0$, $p_0$, and $C_\nu$. As such, it follows from Theorem~\ref{thm:KoskelaZhong} that there exists \[
\eps_1:=\eps_1(\theta_0,p_0,C_{\theta_0,p_0},C_\nu)<\min\{\theta_0p_0/2,p_0-1\}
\]
and $C_1:=C_1(\theta_0,p_0,C_{\theta_0,p_0},C_\nu)$ such that $\oXeps\setminus E$ satisfies a $p$-Hardy inequality with respect to $\mu_{\beta_0}$, with constant $C_p=C_1$, for all $p\in (p_0-\eps_1,p_0+\eps_1)$. 

For such $p$, let 
\[
\theta_p:=1-\frac{p_0(1-\theta_0)}{p}.
\]
Note that our choice of $\eps_1<\min\{\theta_0p_0/2,p_0-1\}$ ensures that $p>1$ and $0<\theta_p<1$.  Since
\[
\frac{\beta_0}{\eps}=p(1-\theta_p),
\]
it follows from Theorem~\ref{thm:HypFill Hardy} that $Z\setminus E$ satisfies a $(\theta_p,p)$-Hardy inequality for all $p\in (p_0-\eps_1,p_0+\eps_1)$, with constant $C_{\theta_p,p}$ depending only on $p$, $\theta_0$, $p_0$, $C_{\theta_0,p_0}$, and $C_\nu$, see Figure~\ref{fig:KoskelaZhongImprovement}.
\begin{figure}[h]
\centering
\includegraphics[scale=0.5]{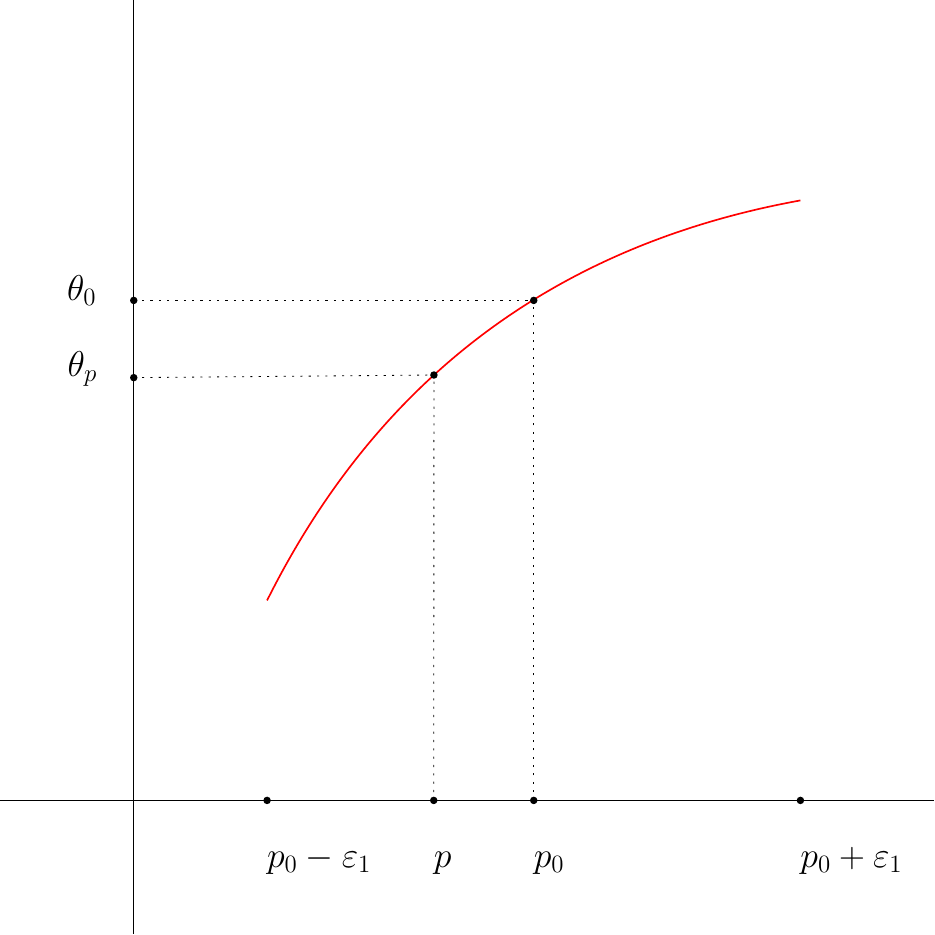}
\caption{For each point $(\theta,p)$ lying on the above curve in the $(\theta,p)$-plane, $Z\setminus E$ satisfies a $(\theta,p)$-Hardy inequality.}\label{fig:KoskelaZhongImprovement}
\end{figure}

Now, let $p\in (p_0-\eps_1,p_0+\eps_1)$. Since $\oXeps\setminus E$ satisfies a $p$-Hardy inequality with respect to $\mu_{\beta_0}$, it follows from Theorem~\ref{thm:delta reg improvement} that $\oXeps\setminus E$ also supports a $p$-Hardy inequality with respect to $w\,d\mu_{\beta_0}$ for any $p$-admissible weight $w$ which is $\delta_p$-regularizable in $\oXeps\setminus E$, with 
\begin{equation}\label{eq:delta p}
\delta_p:=\frac{p}{C_0}(2C_1)^{-1/p}.
\end{equation}
Here, $C_0$ is the constant from Lemma~\ref{lem:regularized gradient}, which depends only on $C_{\mu_{\beta_0}}$, which itself depends only on $\theta_0$, $p_0$, and $C_\nu$.  As $Z$ is a $c$-porous subset of $\oXeps$ by Lemma~\ref{lem:Z porous in X}, with $c$ depending only on $\alpha$, it follows from Proposition~\ref{prop:DistanceToPorousSets} that there exists 
\[
0<\sigma_p:=\sigma_p(\theta_0,p_0,C_\nu,\delta_p)<\min\{\theta_0p_0/2,p_0(1-\theta_0)\}
\]  
such that if $\sigma\in\R$ satisfies $|\sigma|<\sigma_p$, then $d_\eps(\cdot,Z)^\sigma$ is $\delta_p$-regularizable in $\oXeps\setminus E$.  As 
\begin{equation}\label{eq:HypFillWeights}
d_\eps(\cdot,Z)^\sigma\,d\mu_{\beta_0}\simeq \mu_{\beta_0+\eps\sigma},
\end{equation}
see \eqref{eq:WeightedMeasure}, it follows from Theorem~\ref{thm:HypFill} that $d_\eps(\cdot,Z)^\sigma$ is a $p$-admissible weight.  Thus, from Theorem~\ref{thm:delta reg improvement} and \eqref{eq:HypFillWeights}, it follows that $\oXeps\setminus E$ satisfies a $p$-Hardy inequality with respect to $\mu_{\beta_0+\eps\sigma}$ for all $\sigma\in\R$ such that $|\sigma|<\sigma_p$. 

For such $\sigma$, we have that
\[
\frac{\beta_0+\eps\sigma}{\eps}=p(1-(\theta_p-\sigma/p)).
\]
Note that our choice of $\sigma_p<\min\{\theta_0p_0/2,p_0(1-\theta_0)\}$ ensures that $0<\theta_p-\sigma/p<1$ for all such $\sigma$.  From Theorem~\ref{thm:HypFill Hardy}, it then follows that $Z\setminus E$ satisfies a $(\theta_p-\sigma/p,p)$-Hardy inequality for all $\sigma\in\R$ with $|\sigma|<\sigma_p$. We note from Proposition~\ref{prop:DistanceToPorousSets} and \eqref{eq:delta p} that $\sigma_p$ is continuous in $p$ on the interval $(p_0-\eps_1,p_0+\eps_1)$.  Since $\theta_p$ is also continuous in $p$ on this interval, there exists $0<\eps_0\le \eps_1$, depending only on $\theta_0$, $p_0$, $C_{\theta_0,p_0}$, and $C_\nu$ such that $Z\setminus E$ satisfies a $(\theta,p)$-Hardy inequality for all $\theta\in (\theta_0-\eps_0,\theta_0+\eps_0)$ and $p\in(p_0-\eps_0,p_0+\eps_0)$, see Figure~\ref{fig:full improvement}.
\begin{figure}[h]
\centering
\includegraphics[scale=0.5]{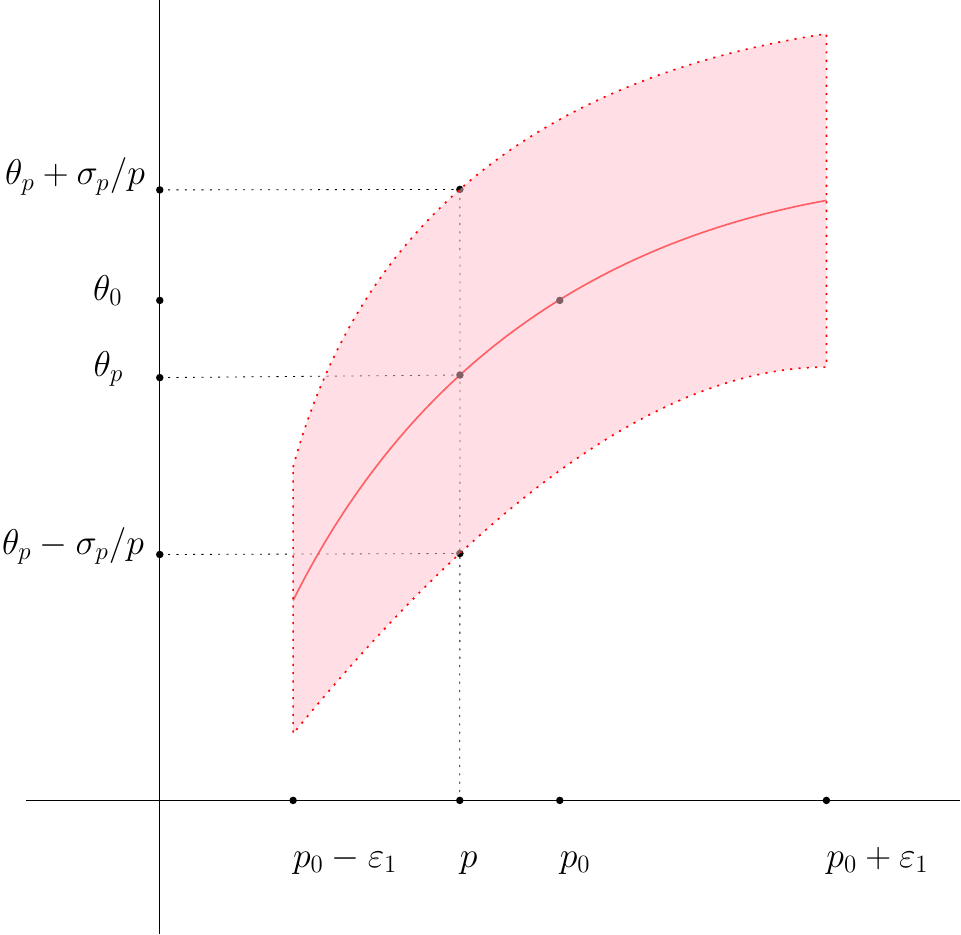}
\caption{For each point $(\theta,p)$ lying in the shaded region of the $(\theta,p)$-plane, $Z\setminus E$ satisfies a $(\theta,p)$-Hardy inequality.}\label{fig:full improvement}
\end{figure}
\end{proof}

\section{Examples and Applications}\label{sec:Examples}

As mentioned in the introduction, self-improvement results for the pointwise $(\theta,p)$-Hardy inequality were recently obtained in \cite{IMV} for domains in doubling metric measure spaces whose complements satisfy the $(\theta,p)$-capacity density condition, see Definition~\ref{def:FracCapDensity} below.  However, our self-improvement result Theorem~\ref{thm:FracHardyImprovement} applies to bounded domains which satisfy the weaker condition of a $(\theta,p)$-Hardy inequality.  In this subsection, we provide an example of such a domain, namely the punctured unit ball $B(0,1)\setminus\{0\}\subset\R^n$, and show that it satisfies a $(\theta,p)$-Hardy inequality whenever $0<\theta<1$ and  $1<p<\infty$ satisfy $\theta p<n$.  Rather than proving this inequality directly, we first show that a sufficient condition for the $(\theta,p)$-Hardy inequality, given in terms of Assouad codimensions, follows from Theorem~\ref{thm:HypFill Hardy} and an analogous result due to Lehrb\"ack \cite{L2} for the $p$-Hardy inequality. In addition to justifying the example of the punctured ball, we provide this sufficient condition, Proposition~\ref{prop:JuhaFractional} below, in an attempt to illustrate how results for fractional Hardy inequalities can be readily obtained from their $p$-Hardy inequality counterparts by using Theorem~\ref{thm:HypFill Hardy}.    

We first recall the definitions of the upper and lower Assouad codimensions.  Given a metric measure space $(X,d,\mu)$, a set $E\subset X$, and $r>0$, we denote the $r$-neighborhood of $E$ by 
\[
E_r:=\{x\in X:d(x,E)<r\}.
\]
The \emph{lower Assouad codimension} of $E$, denoted $\underline{\codim}^\mu_A(E)$, is the supremum of all $t\ge 0$ for which there exists $C\ge 1$ such that 
\[
\frac{\mu(B(x,R)\cap E_r)}{\mu(B(x,R))}\le C\left(\frac{r}{R}\right)^t
\]
for all $x\in E$ and all $0<r\le R<\diam(X)$.  Likewise, the \emph{upper Assouad codimension} of $E$, denoted $\overline{\codim}_A^\mu(E)$, is the infimum of all $s\ge 0$ for which there exists $c>0$ such that 
\[
\frac{\mu(B(x,R)\cap E_r)}{\mu(B(x,R))}\ge c\left(\frac{r}{R}\right)^s
\]
for all $x\in E$ and all $0<r\le R<\diam(E)$. If $\diam(E)=0$, then the upper bound on $R$ is omitted in the definition. When the lower Assouad codimension equals the upper Assouad codimension we simply call it the Assouad codimension.

The following sufficient condition for $p$-Hardy inequalities is due to Lehrb\"ack \cite{L2}:
\begin{prop}\label{prop:Juha Assouad}{\cite[Proposition~7.1]{L2}} Let $(X,d,\mu)$ be a doubling metric measure space supporting a $(1,p)$-Poincar\'e inequality.  Let $\Omega_0\subset X$ be an open set satisfying $\overline{\codim}^\mu_A(X\setminus\Omega_0)<p$.  If $F\subset\overline\Omega_0$ is a closed set satisfying $\underline{\codim}^\mu_A(F)>p$, then $\Omega:=\Omega_0\setminus F$ satisfies a $p$-Hardy inequality with respect to $\mu$.    
\end{prop}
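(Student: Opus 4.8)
The plan is to split the Hardy weight of $\Omega=\Omega_0\setminus F$ according to which piece of the complement realizes the distance, thereby reducing the statement to two independent $p$-Hardy inequalities — one for $\Omega_0$, controlled by the \emph{thickness} of $X\setminus\Omega_0$, and one for $X\setminus F$, controlled by the \emph{thinness} of $F$ — and then to produce each of these from the standard structure theory (for the first) and from a dyadic chaining argument keyed to the measure decay of $F$ (for the second).

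First I would record the elementary splitting. Since $X\setminus\Omega=(X\setminus\Omega_0)\cup F$ we have $d_\Omega(x)=\min\{d_{\Omega_0}(x),d_F(x)\}$ for $x\in\Omega$, where $d_F(x):=d(x,F)$, and hence $d_\Omega^{-p}\le d_{\Omega_0}^{-p}+d_F^{-p}$ pointwise on $\Omega$. Consequently, for any $u\in\Lip_c(\Omega)$,
\[
\int_\Omega\frac{|u|^p}{d_\Omega^p}\,d\mu\le\int_\Omega\frac{|u|^p}{d_{\Omega_0}^p}\,d\mu+\int_\Omega\frac{|u|^p}{d_F^p}\,d\mu .
\]
Because $\Omega\subset\Omega_0$ is open and $u$ is compactly supported in $\Omega$, we have $u\in\Lip_c(\Omega_0)$, so the first term is $\lesssim\int g_u^p\,d\mu$ as soon as $\Omega_0$ satisfies a $p$-Hardy inequality; likewise $\Omega\subset X\setminus F$ is open and $u$ is compactly supported in it, so $u\in\Lip_c(X\setminus F)$ and the second term is $\lesssim\int g_u^p\,d\mu$ as soon as $X\setminus F$ satisfies a $p$-Hardy inequality. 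Thus it suffices to prove: (a) $\Omega_0$ satisfies a $p$-Hardy inequality, and (b) $X\setminus F$ satisfies a $p$-Hardy inequality.

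For (a): the hypothesis $\overline{\codim}^\mu_A(X\setminus\Omega_0)<p$ provides $s<p$ and $c>0$ with $\mu(B(x,R)\cap(X\setminus\Omega_0)_r)\ge c(r/R)^s\mu(B(x,R))$ for $x\in X\setminus\Omega_0$ and $0<r\le R<\diam(X\setminus\Omega_0)$. Under the running assumptions that $\mu$ is doubling and supports a $(1,p)$-Poincar\'e inequality, such a measure-density lower bound forces $X\setminus\Omega_0$ to be uniformly $p$-fat (a $p$-capacity density condition), and uniform $p$-fatness of the complement is equivalent to the (pointwise, hence integral) $p$-Hardy inequality for $\Omega_0$; this is precisely the theory of uniformly fat sets and its self-improvement, see \cite{Korte,Lewis,L2} and the references therein. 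For (b): pick $q$ with $p<q<\underline{\codim}^\mu_A(F)$, so that $\mu(B(x,R)\cap F_r)\lesssim(r/R)^q\mu(B(x,R))$ for $x\in F$ and $0<r\le R$, and, by doubling, the same for $x$ within distance $R$ of $F$. Decompose $X\setminus F$ into dyadic shells $A_k=\{\,2^{-k-1}\le d_F<2^{-k}\,\}$ — only finitely many of which meet a given compactly supported $u$, so unboundedness of $X$ is harmless — so that $\int_{A_k}|u|^p d_F^{-p}\,d\mu\simeq 2^{kp}\int_{A_k}|u|^p\,d\mu$. On each shell, chain a point $x\in A_k$ down to a nearest point of $F$ through balls of geometrically decreasing radii, using the $(1,p)$-Poincar\'e inequality (and, if convenient for the overlap bookkeeping, its Keith--Zhong self-improvement to a $(q_0,p)$-inequality with $q_0<p$, exactly as in the proof of Theorem~\ref{thm:delta reg improvement}); summing these estimates over the shells, the geometric factor $\sum_k 2^{k(p-q)}<\infty$ arising from $q>p$ is what closes the estimate and yields $\int_{X\setminus F}|u|^p d_F^{-p}\,d\mu\lesssim\int g_u^p\,d\mu$. (This is the metric-space analogue of the classical fact that $\R^n\setminus\{0\}$ satisfies the $p$-Hardy inequality when $p<n$.)

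I expect part (b) to be the main obstacle, and more precisely the careful interaction between the chaining/Poincar\'e estimate and the measure-decay bound: a naive pointwise Hardy estimate followed by integration would need an $L^1\to L^1$ bound for a maximal operator, which is false, so the thinness exponent $q>p$ must be exploited to produce genuine summability over scales rather than only a uniform pointwise bound. Part (a) is essentially a citation of the fatness/Hardy theory under the stated hypotheses, and the splitting in the first step is elementary.
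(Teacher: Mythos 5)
The paper does not prove this proposition at all: it is quoted verbatim from Lehrb\"ack \cite{L2} (Proposition~7.1 there), so there is no in-paper argument to compare against; what you have written is, in outline, the same strategy Lehrb\"ack uses, namely split off the two pieces of the complement and invoke a ``thick-set'' Hardy inequality and a ``thin-set'' Hardy inequality. Your reduction step is correct and complete: since $X\setminus\Omega=(X\setminus\Omega_0)\cup F$, one has $d_\Omega=\min\{d_{\Omega_0},d_F\}$, the pointwise bound $d_\Omega^{-p}\le d_{\Omega_0}^{-p}+d_F^{-p}$ holds, and any $u\in\Lip_c(\Omega)$ lies in both $\Lip_c(\Omega_0)$ and $\Lip_c(X\setminus F)$ with $g_u$ supported in $\Omega$, so the statement does follow from (a) a $p$-Hardy inequality for $\Omega_0$ and (b) one for $X\setminus F$.

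The gap is that neither (a) nor (b) is actually established in your text. For (a), the implication ``$\overline{\codim}^\mu_A(X\setminus\Omega_0)<p$ implies uniform $p$-fatness'' is itself a nontrivial theorem: the Assouad condition controls measures of \emph{neighborhoods}, and passing from that to a capacity density bound goes through a Hausdorff-content density condition of codimension $s'<p$ and a Maz'ya-type capacity--content estimate (or, equivalently, it is one of the main theorems of \cite{L2}); citing \cite{Lewis,Korte,L2} is legitimate, but then this half is a citation, not a proof. For (b), which you rightly identify as the main obstacle, the sketch stops exactly where the content of the theorem begins: chaining $x\in A_k$ to a nearest point of $F$ produces gradient averages over balls at \emph{all} scales $2^{-j}$, $j\ge k$, centered on $F$, and after taking $p$-th powers, integrating over the shell, and summing in $k$, one must distribute the decay $\mu(B(w,R)\cap F_r)\lesssim (r/R)^q\mu(B(w,R))$ against these multi-scale averages (via H\"older with a geometric weight $2^{-j\delta}$, or a capacitary argument) to avoid double counting; the assertion that ``the factor $\sum_k 2^{k(p-q)}<\infty$ closes the estimate'' describes the desired outcome rather than deriving it. You correctly observe that no pointwise-Hardy/maximal-function shortcut is available for thin complements (consistent with Example~\ref{ex:Zero Capacity}), which is precisely why this bookkeeping cannot be waved through. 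Either carry out that estimate in detail, or cite the thin-case theorem of \cite{L2} as well --- at which point your argument coincides with Lehrb\"ack's own proof of the cited proposition.
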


Notice that the metric space $Z$ has Assouad codimension $\beta/\eps=(1-\theta)p$ in its hyperbolic filling $(\oXeps,d_\eps,\mu_\beta)$.  By a similar calculation, a set $A\subset Z$ of Assouad codimension $\theta p$ will have Assouad codimension $\theta p+\beta/\eps=p$ in $\oXeps$. We will make these arguments precise with the aid of covering arguments by balls in the proof of the following statement. 
 Applying this observation and the previous result in conjunction with Theorem~\ref{thm:HypFill Hardy}, we obtain the following fractional analog of the previous proposition.

\begin{prop}\label{prop:JuhaFractional} Let $(Z,d,\nu)$ be a complete doubling metric measure space, and let $0<\theta<1$, and $1<p<\infty$.  Let $\Omega_0\subset Z$ be a bounded open set satisfying $\overline{\codim}^\nu_A(Z\setminus\Omega_0)<\theta p$.  If $F\subset\overline\Omega_0$ is a closed set satisfying $\underline{\codim}^\nu_A(F)>\theta p$, then $\Omega:=\Omega_0\setminus F$ satisfies a $(\theta,p)$-Hardy inequality.    
\end{prop}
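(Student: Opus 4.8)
The plan is to reduce the statement to its classical $p$-Hardy counterpart, Proposition~\ref{prop:Juha Assouad}, by passing to a hyperbolic filling and then invoking the equivalence in Theorem~\ref{thm:HypFill Hardy}. Since $\Omega:=\Omega_0\setminus F$ is bounded, I would first apply the localization Proposition~\ref{prop:localization} to replace $Z$ by the compact doubling exhaustion set $Z_R$ (for $R$ so large that $Z\setminus E=Z_R\setminus E$, where $E:=Z\setminus\Omega$) and then rescale the metric so that $\diam(Z_R)<1$. The only point requiring attention here is that the two Assouad codimension hypotheses are inherited by $(Z_R,\nu|_{Z_R})$: the estimates at scales below $\diam(\overline{\Omega_0})$ are identical to those in $Z$ because $\overline{\Omega_0}$ (being bounded in a complete doubling space) is compact and lies well inside $B(z_0,R)\subset Z_R$, while the corkscrew condition for $Z_R$ obtained in the proof of Proposition~\ref{prop:exhaustion} disposes of the remaining larger scales, with the codimension constants permitted to deteriorate. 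For $Z$ geodesic — in particular in $\R^n$, the setting of Example~\ref{ex:punctured ball} — this step is immediate with $Z_R=\overline{B(z_0,R)}$.

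Assume now $Z$ is compact with $\diam(Z)<1$. Fix $\alpha=e^{1/4}$, $\tau=2$, $\eps=\tfrac14$, put $\beta:=\eps p(1-\theta)>0$ so that $\beta/\eps=p(1-\theta)$, and form the uniformized hyperbolic filling $(\oXeps,d_\eps,\mu_\beta)$. The set $E=(Z\setminus\Omega_0)\cup F$ is closed in $Z$, hence in $\oXeps$, and one has the identity $\oXeps\setminus E=\Omega_0'\setminus F$, where $\Omega_0':=\oXeps\setminus(Z\setminus\Omega_0)$ is open, $F$ is closed, $\oXeps\setminus\Omega_0'=Z\setminus\Omega_0$, and $F\subset\overline{\Omega_0}\subset\overline{\Omega_0'}$. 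By Theorem~\ref{thm:HypFill}, $(\oXeps,d_\eps,\mu_\beta)$ is doubling and supports a $(1,1)$- and hence a $(1,p)$-Poincar\'e inequality, so Proposition~\ref{prop:Juha Assouad} will apply to $\Omega_0'$ and $F$ once I check $\overline{\codim}^{\mu_\beta}_A(Z\setminus\Omega_0)<p$ and $\underline{\codim}^{\mu_\beta}_A(F)>p$; it then yields a $p$-Hardy inequality for $\oXeps\setminus E$ with respect to $\mu_\beta$, and Theorem~\ref{thm:HypFill Hardy} (applicable precisely because $\beta/\eps=p(1-\theta)$) converts this into the desired $(\theta,p)$-Hardy inequality for $Z\setminus E=\Omega$.

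The heart of the matter is a codimension transfer identity: for every non-empty $A\subset Z$, viewed as a subset of $(\oXeps,d_\eps,\mu_\beta)$, one has $\overline{\codim}^{\mu_\beta}_A(A)=\overline{\codim}^\nu_A(A)+\beta/\eps$ and $\underline{\codim}^{\mu_\beta}_A(A)=\underline{\codim}^\nu_A(A)+\beta/\eps$. Applied to $A=Z\setminus\Omega_0$ and $A=F$, and using $\beta/\eps=(1-\theta)p$, this turns the hypotheses $\overline{\codim}^\nu_A(Z\setminus\Omega_0)<\theta p$ and $\underline{\codim}^\nu_A(F)>\theta p$ into exactly the two inequalities required above. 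To prove the identity I would use two facts from Theorem~\ref{thm:HypFill}: the bi-Lipschitz equivalence $(Z,d)\cong(\partial_\eps X,d_\eps)$ and the codimensional bound $\mu_\beta(B_\eps(z,r))\simeq r^{\beta/\eps}\nu(B_Z(z,r))$ for $z\in Z$ and small $r$. For the upper estimate of the measure of an $\eps$-neighbourhood, which transfers the lower $\nu$-codimension, I would cover $A_r^{\oXeps}\cap B_\eps(x,\rho)$ by balls $B_\eps(a_i,2r)$ with $a_i\in A\cap B_\eps(x,2\rho)$ and $\{B_\eps(a_i,r/2)\}$ pairwise disjoint, apply the codimensional bound to each $\mu_\beta(B_\eps(a_i,2r))$, sum using disjointness of the $Z$-traces $B_Z(a_i,cr)$, recognize the result as $\lesssim r^{\beta/\eps}\,\nu(A_{Cr}^Z\cap B_Z(x,C\rho))$, and then divide by $\mu_\beta(B_\eps(x,\rho))\simeq\rho^{\beta/\eps}\nu(B_Z(x,\rho))$. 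For the matching lower estimate, which transfers the upper $\nu$-codimension, I would fix the generation $i$ with $\alpha^{-i}\simeq r$ and sum the measures of the vertex balls $B_\eps((z,i),\alpha^{-i})$ over those $z\in A_i$ whose $Z$-ball meets $A$ and whose vertex lies in $B_\eps(x,C\rho)$; by Lemmas~\ref{lem:HypFillWhitney} and \ref{lem:Chain} these balls have bounded overlap, are contained in $A_{Cr}^{\oXeps}\cap B_\eps(x,C'\rho)$, and contribute $\gtrsim r^{\beta/\eps}\,\nu(A_{cr}^Z\cap B_Z(x,c\rho))$. Throughout one uses that the value of an Assouad codimension is unaffected by changing the dilation constants in its definition, since $\nu$ and $\mu_\beta$ are doubling.

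The step I expect to be the main obstacle is the lower estimate in the transfer identity: this is where the concrete Whitney/vertex combinatorics of the filling must be exploited to produce enough \emph{interior} mass at the prescribed scale near a boundary set, and the passage between $\oXeps$-balls, $Z$-balls, and $r$-neighbourhoods through the bi-Lipschitz correspondence requires careful bookkeeping of covering constants. Everything else is assembly: Theorem~\ref{thm:HypFill}, Theorem~\ref{thm:HypFill Hardy}, and Propositions~\ref{prop:Juha Assouad} and \ref{prop:localization}.
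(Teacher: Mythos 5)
Your proposal is correct and follows essentially the same route as the paper: localize to the compact exhaustion set (whose corkscrew property lets the two Assouad codimension hypotheses pass to the restricted measure), transfer both codimensions up by $\beta/\eps=p(1-\theta)$ into the hyperbolic filling via the relation $\mu_\beta(B_\eps(z,r))\simeq r^{\beta/\eps}\nu(B_Z(z,r))$, apply Proposition~\ref{prop:Juha Assouad} there, and convert back with Theorem~\ref{thm:HypFill Hardy}. The only differences are cosmetic: for the lower bound on $\mu_\beta$-measures of neighborhoods the paper uses the $5$-covering lemma with balls centered on the set itself rather than your generation-$i$ vertex-ball count, and it returns from $Z_0$ to $Z$ by simply enlarging the right-hand side instead of invoking Proposition~\ref{prop:localization}(ii).
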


\begin{proof}
    Let $z_0\in\Omega_0$.  By Proposition~\ref{prop:exhaustion}, there exists a compact set $Z_0$ with \[
    B(z_0,4\diam(\Omega_0))\subset Z_0\subset B(z_0,8\diam(\Omega_0))
    \]
    such that $(Z_0,d_0,\nu_0)$ is doubling, where $d_0:=d|_{Z_0\times Z_0}$ and  $\nu_0:=\nu|_{Z_0}$.
    
    We first show that $\overline\codim_A^{\nu_0}(Z_0\setminus\Omega_0)<\theta p$.  To this end, let $E:= Z\setminus\Omega_0$, let $z\in E\cap Z_0$, and let $0<r\le R<\diam(E\cap Z_0)$.  Consider the case that $z\in B(z_0,3\diam(\Omega_0))$.  Then since $R<\diam(E\cap Z_0)\le 16\diam(\Omega_0)$, it follows that $B(z,R/16)\subset Z_0$.  If $r\le R/16$, we then have that 
    \begin{equation}\label{eq:Restricted 1}
        \frac{\nu(B(z,R)\cap E_r\cap Z_0)}{\nu(B(z,R)\cap Z_0)}\ge\frac{\nu(B(z,R/16)\cap E_r)}{\nu(B(z,R))}\gtrsim\frac{\nu(B(z,R/16)\cap E_r)}{\nu(B(z,R/16))},
    \end{equation}
    where we have used the doubling property of $\nu$ in the last inequality.
    If $r>R/16$, then $B(z,R/16)\subset E_r$, and so we have that 
    \[
    \nu(B(z,R)\cap E_r\cap Z_0)\ge\nu(B(z,R/16)\cap E_r)=\nu(B(z,R/16))\gtrsim\nu(B(z,R))\ge\nu(B(z,R)\cap E_r).
    \]
     Hence, we have that 
    \begin{equation}\label{eq:Restricted 2}
        \frac{\nu(B(z,R)\cap E_r\cap Z_0)}{\nu(B(z,R)\cap Z_0)}\gtrsim\frac{\nu(B(z,R)\cap E_r)}{\nu(B(z,R))}.
    \end{equation}
    In the case that $z\not\in B(z_0,3\diam(\Omega_0))$, it follows that $B(z,R/16)\cap\Omega_0=\varnothing$, and so $B(z,R/16)\subset E_r$.  In the proof of Proposition~\ref{prop:exhaustion}, it was shown that $Z_0$ satisfies the corkscrew condition with constant $c=1/48$. As such, there exists $z'\in B(z,R/16)$ such that $B(z',cR/16)\subset B(z,R/16)\cap Z_0$, and so it follows that 
    \[
    \nu(B(z,R)\cap E_r\cap Z_0)\ge\nu(B(z',cR/16))\gtrsim\nu(B(z,R))\ge\nu(B(z,R)\cap E_r),
    \]
    where we have used the doubling property of $\nu$.  Thus, we have
    \begin{equation}\label{eq:Restricted 3}
        \frac{\nu(B(z,R)\cap E_r\cap Z_0)}{\nu(B(z,R)\cap Z_0)}\gtrsim\frac{\nu(B(z,R)\cap E_r)}{\nu(B(z,R))}
    \end{equation}
    in this case as well.  Since $\overline\codim_A^\nu(E)<\theta p$, it then follows from \eqref{eq:Restricted 1}, \eqref{eq:Restricted 2}, and \eqref{eq:Restricted 3} that $\overline\codim_A^{\nu_0}(Z_0\setminus\Omega_0)<\theta p$.

    Let $F\subset\Omega_0$ be a closed set such that $\underline\codim_A^\nu(F)>\theta p$.  We then have that $\underline\codim_A^{\nu_0}(F)>\theta p$.  Indeed, let $z\in F$, and let $0<r\le R<\diam (Z_0)$.  As $Z_0$ satisfies the corkscrew condition, there exists $z'\in B(z,R)$ such that $B(z',cR)\subset B(z,R)\cap Z_0$.  Thus, by the doubling property of $\nu$, we have that 
    \[
    \frac{\nu(B(z,R)\cap F_r\cap Z_0)}{\nu(B(z,R)\cap Z_0)}\le\frac{\nu(B(z,R)\cap F_r)}{\nu(B(z',cR))}\lesssim\frac{\nu(B(z,R)\cap F_r)}{\nu(B(z,R))}.
    \]
    As $\underline\codim_A^\nu(F)>\theta p$, it then follows that $\underline\codim_A^{\nu_0}(F)>\theta p$ as well.

    Let $\alpha=e^{1/4}$, $\tau=2$, and let $\eps=\log\alpha$.  Choosing $\beta>0$ such that $\beta/\eps=p(1-\theta)$, consider the uniformized hyperbolic filling $(\oXeps,d_\eps,\mu_\beta)$ of the compact, doubling metric measure space $(Z_0,d_0,\nu_0)$, as constructed in Section~\ref{sec:HypFill}.  Let $E_0:=Z_0\setminus\Omega_0\subset\oXeps$.
    
    We claim that $\overline\codim_A^{\mu_\beta}(E_0)<p$.  Let $x\in E_0$, and let $0<r\le R<\diam_\eps(E_0)$.  We denote the $r$-neighborhood of $E_0$ with respect to the metric $d_\eps$ by $(E_0)_{r,\eps}$.  We note that by Theorem~\ref{thm:HypFill}, there exists a constant $C\ge 2$, depending only on $\alpha$ and $\tau$ such that 
    \begin{align}\label{eq:biLip constant}
        C^{-1}d_0(z,w)\le d_\eps(z,w)\le Cd_0(z,w)
    \end{align}
    for all $z,w\in Z_0$.  Suppose first that $0<r\le R/(2C)$.  In this case, we have that $ B_\eps(x,R/2)\cap (E_0)_{r,\eps}$ is covered by the collection $\{B_\eps(y,r)\}_{y\in B_\eps(x,R/2)\cap E_0}$, and so by the $5$-covering lemma, there exists a disjoint subcollection $\{B_i^\eps:=B_\eps(y_i,r)\}_{i\in I\subset\N}$ such that 
    \[
    (E_0)_{r,\eps}\cap B_\eps(x,R/2)\subset\bigcup_i 5B_i^\eps.
    \]
    Note that by the assumption on $r$, we have that $B_i^\eps\subset (E_0)_{r,\eps}\cap B_\eps(x,R)$ for each $i\in I$.  Thus, by \eqref{eq:biLip constant} and the $\beta/\eps$-codimensionality between $\nu_0$ and $\mu_{\beta}$ given by Theorem~\ref{thm:HypFill}, as well as the doubling property of $\nu_0$, we have that 
    \begin{align*}
        \mu_\beta( B_\eps(x,R)\cap (E_0)_{r,\eps})\ge\sum_{i\in I}\mu_\beta(B_i^\eps)&\simeq r^{\beta/\eps}\sum_{i\in I}\nu_0(B_i^\eps\cap Z)\\
        &\simeq r^{\beta/\eps}\sum_{i\in I}\nu_0(5B_i^\eps\cap Z)\\
        &\ge r^{\beta/\eps}\nu_0(B_\eps(x,R/2)\cap(E_0)_{r,\eps}\cap Z)\\
        &\ge r^{\beta/\eps}\nu_0(B(x,R/(2C))\cap(E_0)_{r/C}),
    \end{align*}
where in the last expression, the $r/C$-neighborhood of $E_0$ and the ball $B(x,R/(2C))$ are both with respect to the metric $d_0$.  By the $\beta/\eps$-codimensionality between $\nu_0$ and $\mu_\beta$ as well as \eqref{eq:biLip constant} and the doubling property of $\nu_0$, it also follows that $\mu_\beta(B_\eps(x,R))\simeq R^{\beta/\eps}\nu_0(B(x,R/(2C))$.  By our choice of $\beta>0$, we then have that 
\begin{equation}\label{eq:Codim 1}
    \frac{\mu_\beta( B_\eps(x,R)\cap (E_0)_{r,\eps})}{\mu_\beta(B_\eps(x,R))}\gtrsim\left(\frac{r}{R}\right)^{p(1-\theta)}\frac{\nu_0( B(x,R/(2C))\cap (E_0)_{r/C})}{\nu_0(B(x,R/(2C)))}.
\end{equation}

Suppose now that $R/(2C)<r\le R$.  In this case, we have that $B_\eps(x,R/(2C))\subset (E_0)_{r,\eps}$, and so it follows from codimensionality, \eqref{eq:biLip constant}, and the doubling property of $\nu_0$ that
\begin{align*}
    \mu_\beta(B_\eps(x,R)\cap(E_0)_{r,\eps})\ge\mu_\beta(B_\eps(x,R/(2C))&\simeq r^{\beta/\eps}\nu_0(B_\eps(x,R/(2C))\cap Z)\\
    & \simeq r^{\beta/\eps}\nu_0(B(x,R/C))\\
    &\ge r^{\beta/\eps}\nu_0(B(x,R/C)\cap (E_0)_{r/C}).
\end{align*}
Hence, by the choice of $\beta>0$, we have in this case that 
\begin{equation*}
    \frac{\mu_\beta( B_\eps(x,R)\cap (E_0)_{r,\eps})}{\mu_\beta(B_\eps(x,R))}\gtrsim\left(\frac{r}{R}\right)^{p(1-\theta)}\frac{\nu_0( B(x,R/C)\cap (E_0)_{r/C})}{\nu_0(B(x,R/C))}.
\end{equation*}
We note that $R/C\le\diam(E_0)$, where the diameter is with respect to $d_0$.  Therefore, since $\overline\codim_A^{\nu_0}(E_0)<\theta p$, it follows from combining this case with \eqref{eq:Codim 1} 
that $\overline\codim_A^{\mu_\beta}(E_0)<p$. 

By a similar argument, using the $\beta/\eps$-codimensionality between $\mu_\beta$ and $\nu_0$, $\eqref{eq:biLip constant}$, the doubling property of $\nu_0$, and the fact that $\underline\codim_A^{\nu_0}(F)>\theta p$, we have that $\underline\codim_A^{\mu_\beta}(F)>p$.  By applying Proposition~\ref{prop:Juha Assouad}, it then follows that $\oXeps\setminus(E_0\cup F)$ satisfies a $p$-Hardy inequality with respect to $\mu_\beta$.  From Theorem~\ref{thm:HypFill Hardy}, we then have that $Z_0\setminus(E_0\cup F)$ satisfies a $(\theta,p)$-Hardy inequality, and so by possibly increasing only the right-hand side of the inequality, it follows that $\Omega:=Z\setminus(E_0\cup F)$ satisfies a $(\theta,p)$-Hardy inequality as well.     \end{proof}

\begin{example}\label{ex:punctured ball}
    Let $\Omega:=B(0,1)\setminus\{0\}\subset\R^n$, and let $0<\theta<1$ and $1<p<\infty$ be such that $\theta p<n$.  Letting $\nu=\Leb^n$, $E:=\R^n\setminus B(0,1)$ and $F:=\{0\}$, we have that 
    \begin{equation*}
        \overline{\codim}_A^\nu(E)=0<\theta p\qquad\text{and}\qquad\underline{\codim}_A^\nu(F)=n>\theta p.
    \end{equation*}
    Thus, from Proposition~\ref{prop:JuhaFractional}, it follows that $\Omega$ satisfies a $(\theta,p)$-Hardy inequality.
    \end{example}

Using the hyperbolic filling, it is also straightforward to show that the complement of the domain $\Omega$ above does not satisfy the $(\theta,p)$-capacity density condition, as considered in \cite{IMV}.  We first recall the definitions of this condition and the fractional relative capacity, stated here for a metric measure space $(Z,d,\nu)$:

\begin{defn}
    Let $0<\theta<1$ and $1\le p<\infty$, and let $\Lambda\ge 2$.  Let $B\subset Z$ be a ball, and let $E\subset\overline B$ be a closed set.  We then write
    \[
    \vcap_{\theta,p}(E,\,2B,\,\Lambda B):=\inf_u\int_{\Lambda B}\int_{\Lambda B}\frac{|u(z)-u(w)|^p}{d(z,w)^{\theta p}\nu(B(z,d(z,w)))}d\nu(w)d\nu(z),
    \]
    where the infimum is taken over all continuous functions $u:Z\to\R$ such that $u\ge 1$ on $E$ and $u=0$ on $Z\setminus 2B$.
\end{defn}

\begin{defn}\label{def:FracCapDensity}
{\cite[Definition~4.1]{IMV}} Let $0<\theta<1$, $1<p<\infty$, and let $E\subset Z$ be a closed set.  We say that $E$ satisfies the \emph{$(\theta,p)$-capacity density condition} if there are constants $c_0>0$ and $\Lambda>2$ such that 
\[
\vcap_{\theta,p}(E\cap\overline{B(x,r)},\,B(x,2r),\, B(x,\Lambda r))\ge c_0\vcap_{\theta,p}(\overline{B(x,r)},\,B(x,2r),\,B(x,\Lambda r))
\]
for all $x\in E$ and $0<r<\diam(E)/8$.
\end{defn}

Notice that $\vcap_{\theta,p}(\overline{B(x,r)},\,B(x,2r),\,B(x,\Lambda r))>0$ whenever $\Lambda > 2$ if $Z$ is geodesic. 

\begin{example}\label{ex:Zero Capacity}
    Let $\Omega=B(0,1)\setminus\{0\}\subset\R^n$, and let $0<\theta<1$ and $1<p<\infty$ be such that $\theta p<n$.  We show that $\R^n\setminus\Omega$ does not satisfy the $(\theta,p)$-capacity density condition, due to the component $\{0\}$.  
    
    Let $\Lambda>2$, and  let $r>0$.  Let $Z=\overline{B(0,\Lambda r)}$, let $d=d_{\Euc}$, and let $\nu=\Leb^n|_{Z}$.  As above, let $\alpha=e^{1/4}$, $\tau=2$, and $\eps=\log\alpha$, and choose $\beta>0$ such that $\beta/\eps=p(1-\theta)$.  We then consider the uniformized hyperbolic filling $(\oXeps,d_\eps,\mu_\beta)$ of the compact, doubling metric measure space $(Z,d,\nu)$.  For each $0<\eta<r$, consider the continuous function $u_\eta:\oXeps\to\R$ given by 
    \[
    u_\eta(x):=\left(1-\frac{d_\eps(x,\{0\})}{\eta}\right)_+.
    \]
    Then $u_\eta$ is $1/\eta$-Lipschitz in $\oXeps$, and if $T$ is the trace operator given by Theorem~\ref{thm:HypFill}, then $Tu_\eta=1$ on $\{0\}$, and $Tu_\eta=0$ in $Z\setminus B(0,2r)$ for sufficiently small $\eta$, depending only the bi-Lipschitz constants between $d$ and $d_\eps$. It then follows from Theorem~\ref{thm:HypFill} and our choice of $\beta>0$ that 
    \begin{align*}
        \vcap_{\theta,p}(\{0\},\,B(0,2r),\,B(0,\Lambda r))\lesssim\int_{\oXeps} g_{u_\eta}^p\,d\mu_\beta\le \eta^{-p}\mu_\beta(B_\eps(0,\eta))\lesssim\eta^{-\theta p}\nu(B(0,\eta))\simeq \eta^{n-\theta p}.
    \end{align*}
    As $\theta p<n$ and since $\eta>0$ is arbitrary, we have that $\vcap_{\theta,p}(\{0\},\,B(0,2r),\,B(0,\Lambda r))=0$, and so $\R^n\setminus\Omega$ does not satisfy the $(\theta,p)$-capacity density condition. 
\end{example}

\end{document}